\shorttitle{Information ranking and power laws on trees} 
\def\qed{\hfill{$\Box$}}
\numberwithin{equation}{section}
\begin{document}

\title{Information Ranking and Power Laws on Trees}

\authorone[Columbia University]{Predrag R. Jelenkovi\'c}

\addressone{Department of Electrical Engineering, Columbia University, New York, NY 10027 } 

\authortwo[Columbia University]{Mariana Olvera-Cravioto} 

\addresstwo{Department of Industrial Engineering and Operations Research, Columbia University, New York, NY 10027 } 

\begin{abstract}
We consider the stochastic analysis of information ranking algorithms of large interconnected data sets, e.g. Google's PageRank algorithm for ranking pages on the World Wide Web. The stochastic formulation of the problem results in an equation of the form
$$R \stackrel{\mathcal{D}}{=} Q + \sum_{i=1}^N C_i R_i,$$
where $N, Q, \{R_i\}_{i\ge 1}, \{C,C_i\}_{i \geq 1}$ are independent non-negative random variables, $\{C,C_i\}_{i \geq 1}$ are identically distributed, and $\{R_i\}_{ i \geq 1}$ are independent copies of $R$; $\stackrel{\mathcal{D}}{=}$ stands for equality in distribution.  We study the asymptotic properties of the distribution of $R$ that, in the context of PageRank, represents the frequencies of highly ranked pages. The preceding equation is interesting in its own right since it belongs to a more general class of weighted branching processes that have been found useful in the analysis of many other algorithms. 

Our first main result shows that if $E N E[C^\alpha]=1, \alpha >0$ and $Q, N$ satisfy additional moment conditions, then $R$ has a power law distribution of index $\alpha$. This result is obtained using a new approach based on an extension of Goldie's (1991) implicit renewal theorem.  Furthermore, when $N$ is regularly varying of index $\alpha>1$, $E N E[C^\alpha]<1$ and $Q, C$ have higher moments than $\alpha$, then the distributions of $R$ and $N$ are tail equivalent. The latter result is derived via a novel sample path large deviation method for recursive random sums. Similarly, we characterize the situation when the distribution of $R$ is determined by the tail of $Q$. The preceding approaches may be of independent interest, as they can be used for analyzing other functionals on trees. We also briefly discuss the engineering implications of our results. 
\end{abstract}

\keywords{Information ranking; stochastic recursions; stochastic fixed point equations; weighted branching processes; power laws; regular variation; implicit renewal theory; large deviations} 

\ams{60H25}{60J80,60F10,60K05} 
         

\section{Introduction}

We consider a problem of ranking large interconnected information (data) sets,
e.g., ranking pages on the World Wide Web (Web). A solution to the preceding problem is given by Google's PageRank algorithm, the details of which are presented in Section~\ref{ss:pr}.
Given the large scale of these information sets, we adopt a stochastic approach 
to the page ranking problem, e.g. Google's PageRank algorithm. 
The stochastic formulation naturally results in an equation of the form
\begin{equation} \label{eq:GeneralPR}
R \stackrel{\mathcal{D}}{=} Q + \sum_{i=1}^N C_i R_i ,
\end{equation}
where $N, Q, \{R_i\}_{i\ge 1}, \{C,C_i\}_{i \geq 1}$ are independent non-negative random variables, $P(Q > 0) > 0$, $\{C,C_i\}_{i \geq 1}$ are identically distributed, and $\{R_i\}_{ i \geq 1}$ 
are independent copies of $R$; $\stackrel{\mathcal{D}}{=}$ stands for equality in distribution.  
We study the asymptotic properties of the distribution of $R$
that, in the context of PageRank, represents the frequencies of highly ranked pages. 
In somewhat smaller generality, the preceding stochastic setup was first 
introduced and analyzed in \cite{Volk_Litv_Dona_07} for the PageRank algorithm; the formulation given in \eqref{eq:GeneralPR} was later studied in \cite{Volk_Litv_08}.

The canonical representation given by recursion \eqref{eq:GeneralPR} is also of independent interest 
since it belongs to a more general class of weighted branching processes (WBPs)
\cite{Rosler_93, Liu_98, Kuhl_04}; the connection to WBPs is discussed in more detail in Section~\ref{SS.RelatedProcesses}. With a small abuse of notation, we also refer to our more restrictive processes as WBPs. These processes have been found useful in the average-case analysis of many algorithms
\cite{Ros_Rus_01}, e.g. quicksort algorithm \cite{Fill_Jan_01}, and thus, 
our study of recursion (\ref{eq:GeneralPR}) may be useful in these types of applications. 
Furthermore,  when $Q=1, C_i \equiv 1$, the steady state solution to (\ref{eq:GeneralPR}) 
represents the total number of individuals born in an ordinary branching process. 
Also, by letting $N$ be a Poisson random variable and 
fixing $Q = 1, C_i \equiv 1$, equation (\ref{eq:GeneralPR}) reduces to the recursion that is satisfied by the 
busy period of an M/G/1 queue. Similarly, selecting $N=1$ yields the fixed point equation satisfied  by the first order autoregressive process; see Section~\ref{SS.RelatedProcesses} for a more thorough discussion on related processes.

In Section~\ref{S.ModelDescription} we connect the iterations of recursion \eqref{eq:GeneralPR} 
to an explicit construction of a WBP on a tree, such that the sum of all the weights of the first $n$ generations of the tree are directly related to the $n$th iteration of the recursion. 
Then, in Section \ref{S.Moments} we present explicit estimates for the 
moments of the total weight, $W_n$, of the $n$th generation in the corresponding WBP.
Using these moment estimates and the WBP representation, we show in Section~\ref{SS.Convergence} that under mild conditions the iterations of \eqref{eq:GeneralPR} converge in distribution to a unique and finite steady state random variable $R$.  Hence, under the stated assumptions, this limiting distribution $P(R\le x)$ is the unique solution to (\ref{eq:GeneralPR}).  The steady state variable $R$ represents the sum of all the weights in the corresponding branching tree. 

Studying the asymptotic tail properties of the constructed steady state solution $R$ to (\ref{eq:GeneralPR}) 
represents the main focus of this paper. In particular, we study the possible causes that can result in power 
tail asymptotics for $P(R > x)$. We discover that the tail behavior of $R$ can be determined/dominated 
by the statistical properties of any of the three variables $C, N$ and $Q$. The corresponding results are presented in Sections~ \ref{S.C_dominates}, \ref{S.NDominates} and \ref{S.QDominates}, respectively.
Our emphasis on power law asymptotics is motivated by the well established empirical fact that the number 
of pages that point to a specific page (in-degree) on the Web, represented by $N$ in recursion (\ref{eq:GeneralPR}), 
follows a power law distribution; other complex data sets, e.g. citations, 
are found to posses similar power law properties as well. 

Our first main result on the tail behavior of  $P(R>x)$  is presented in Theorem~\ref{T.GoldieApplication},
showing that if $E N E[C^\alpha]=1, \alpha >0$ and $Q, N$ satisfy additional moment conditions, then $R$ has a power law distribution of index $\alpha$, 
with an explicitly characterized constant of proportionality. In particular, when $\alpha$ is an integer,
the constant of proportionality of the power law distribution is explicitly computable, see Corollary~\ref{C.explicit}.  This result is obtained by an extension of Goldie's (1991) implicit renewal theorem that we present in Theorem~\ref{T.Goldie}. This extension may be of independent interest since $R$ and $C$ in the statement of Theorem~\ref{T.Goldie} can be any two independent random variables that may satisfy a different recursion.  In the context of the broader literature on WBPs, our
results are related to the studies in \cite{Rosler_93} (see Theorem 6), and more recently in \cite{Alsm_Rosl_05}, both of which study recursion \eqref{eq:GeneralPR} using stable law methods when $Q$, $\{C_i\}$ are deterministic constants. However, these deterministic assumptions fall outside of the scope of this paper; for more details see the discussion in Section \ref{SS.RelatedProcesses} and the remarks after Theorem \ref{T.GoldieApplication}. Outside of these results, the majority of the work on WBPs considers the homogeneous equation ($Q \equiv 0$), e.g. in \cite{Liu_98} the behavior of the distribution of $R$ was characterized using stable-law distributions for $0 < \alpha \leq 1$.  Also, related results for the homogeneous case ($Q \equiv 0$) and $\alpha > 1$ can be found in Theorem 2.2 of \cite{Liu_00} and Proposition 7 of \cite{Iksanov_04}.  Interestingly, our approach for the nonhomogeneous case ($P(Q>0) > 0$) shows that the distribution of $R$ can have a uniform treatment for any $\alpha>0$. For additional comments on results related to our Theorem~\ref{T.GoldieApplication} see the remarks following its statement. Furthermore, this result may provide a new explanation of why power laws are so commonly found in the distribution of wealth since weighted branching processes appear to be reasonable models for the total wealth of a family tree.

Section \ref{S.NDominates} studies the case when $N$ is power law and dominates the tail behavior of $R$.  This is the case that more closely relates to the original formulation of PageRank and the structure of the Web graph since the in-degree $N$ is well accepted to be a power law.  Our main result in this case, stated in Theorem~\ref{T.Main_N}, shows that, when $N$ is regularly varying of index $\alpha>1$, $E N E[C^\alpha]<1$  and $Q, C$ have higher moments than $\alpha$, then the distribution of $R$ is tail equivalent to that of $N$.  Our approach in deriving this result is based on a new sample path heavy-tailed large deviation method for weighted recursions on trees. The key technical result is given by 
Proposition~\ref{P.UniformBound} that provides a uniform bound (in $n$ and $x$) on the distribution of 
the total weight of the $n$th generation $P(W_n>x)$. We would also like to point out that Proposition~\ref{P.UniformBound} resembles to some extent a classical result by Kesten (see 
Lemma~7 on p.~149 of \cite{Ath_McD_Ney_78}), which provides a uniform bound for the sum of 
heavy-tailed (subexponential) random variables. The main difference between the latter result and our uniform bound is that $n$ refers to the depth of the recursion in our case, while in Lemma~7 of \cite{Ath_McD_Ney_78}, $n$ is the number of terms in the sum. This makes the derivation of Proposition~\ref{P.UniformBound} considerably more complicated, and perhaps implausible, if it were not for the fact that we restrict our attention to regularly varying distributions, as opposed to the general subexponential class.

Section \ref{S.QDominates} investigates a third possible source of heavy tails for $R$, the one that arises from the innovation, $Q$, being power law; see Theorem~\ref{T.MainQ}. For $N=1$, this result is consistent 
with a corresponding result for the first order autoregressive process in Lemma A.3 of \cite{Mik_Sam_00}.  The proofs of more technical results are postponed to Section~\ref{S.Proofs}.

Finally, from a mathematical perspective, we would like to emphasize that our sample path large 
deviation approach as well as the extension of the implicit renewal theory, provide 
a new set of tools that can be of potential use in other applications, as well as in studying the broader class of recursions on trees, e.g., one can readily characterize the asymptotic behavior of the distribution that solves $R = Q + \max_{1\leq i\leq N} C_i R_i$. Furthermore, from an engineering perspective, our Theorem~\ref{T.Main_N} shows that for highly ranked pages, the PageRank algorithm basically reflects the popularity vote given by the number of references $N$, implying that overly inflated referencing may be advantageous.  A more detailed discussion on the engineering implications of the performance and design of ranking algorithms, e.g. PageRank, can be found at the end of Section~\ref{S.NDominates}.

\subsection{Google's algorithm: PageRank}
\label{ss:pr}

PageRank is an algorithm trademarked by Google, the Web search engine, to assign to each page a numerical weight that measures its relative importance with respect to other pages. We think of the Web as a very large interconnected graph where nodes correspond to pages.  The Google trademarked algorithm PageRank defines the page rank as: 
\begin{equation} \label{eq:PR}
R(p_i) = \frac{1-d}{n} + d \sum_{p_j \in M(p_i)} \frac{R(p_j)}{L(p_j)},
\end{equation}
where, using Google's notation, $p_1, p_2,\dots, p_n$ are the pages under consideration, $M(p_i)$ is the set of pages that link to $p_i$, $L(p_j)$ is the number of outbound links on page $p_j$, $n$ is the total number of pages on the Web, and $d$ is a damping factor, usually $d=0.85$. As noted in the original paper by Brin and Page (1998) \cite{Bri_Pag_98} PageRank ``can be calculated using a simple iterative algorithm, and corresponds to the principal eigenvector of the normalized link matrix of the Web. Also, a PageRank for 26 million web pages can be computed in a few hours on a medium size workstation."  
Other link-based ranking algorithms for web pages include the HITS algorithm, developed by Kleinberg \cite{Klein_98}, and the TrustRank algorithm \cite{Gyo_Gar_Ped_04}.

While in principle the solution to \eqref{eq:PR} reduces to the solution of a large system (possibly billions)  of linear equations, we believe that finding page ranks in such a way is unlikely to be insightful. Specifically, if one obtains the principal eigenvector of the normalized link matrix, it is hard to obtain from the solution  qualitative insights about the relationship between highly ranked pages and the in-degree/out-degree statistical properties of the graph. 

In particular, the division by the out-degree, $L(p_j)$ in equation \eqref{eq:PR}, was meant to decrease 
the contribution of pages with highly inflated referencing, i.e., those pages that basically point/reference 
possibly indiscriminately to other documents. However, the stochastic approach (to be described in the following sections) reveals that highly ranked pages are essentially insensitive to the parameters of the out-degree distribution, implying that the PageRank algorithm may not reduce the effects of overly inflated referencing (citations, voting) as originally intended, i.e., it may lead to possibly unjustifiable highly ranked pages.  An analytical explanation as to why the tail of the rank distribution is dominated by $N$ was first given in \cite{Volk_Litv_Dona_07} and \cite{Volk_Litv_08}. More discussions on this topic are provided at the end of Section~\ref{S.NDominates}. 

A stochastic approach to analyze \eqref{eq:PR} is to consider the recursion 
\begin{equation} \label{eq:StochPR}
R \stackrel{\mathcal{D}}{=} \gamma + c \sum_{i=1}^N \frac{R_i}{D_i} ,
\end{equation}
where $\gamma, c > 0$ are constants, $cE[1/D] < 1$, $N$ is a random variable independent of the $R_i$'s and $D_i$'s, the $D_i$'s are iid random variables satisfying $D_i \geq 1$, and the $R_i$'s are iid random variables having the same distribution as $R$. In terms of recursion \eqref{eq:PR}, $R$ is the rank of a random page, $N$ corresponds to the in-degree of that node, the $R_i$'s are the ranks of the pages pointing to it, and the $D_i$'s correspond to the out-degrees of each of these pages.  The experimental justification of these independence assumptions can be found in \cite{Volkovich2009}. This stochastic setup was first introduced in \cite{Volk_Litv_Dona_07}, where the process resulting after a finite number of iterations of \eqref{eq:StochPR} was analyzed. More recently, in a follow up paper \cite{Volk_Litv_08}, the more general recursion 
$$
R \stackrel{\mathcal{D}}{=} Q + \sum_{i=1}^N C_i R_i
$$
was analyzed via tauberian theorems for the cases when $N$ or $Q$ dominate. In \cite{Volk_Litv_08}, dependancy between $N$ and $Q$ is allowed, but additional moment conditions are imposed.  Recall that in the setup considered here $N, Q, \{R_i\}_{i\ge 1}, \{C,C_i\}_{i \geq 1}$ are independent non-negative random variables, $P(Q > 0) > 0$, $\{C,C_i\}_{i \geq 1}$ are identically distributed, and $\{R_i\}_{ i \geq 1}$ are independent copies of $R$.

\section{Model Description} \label{S.ModelDescription}

As outlined above, we study the sequence of random variables that are obtained by iterating \eqref{eq:GeneralPR}. Specifically, we consider for $n \geq 0$
\begin{equation} \label{eq:Rstar_Rec}
R_{n+1}^* = Q_{n} + \sum_{i=1}^{N_n} C_{n,i} R_{n,i}^*,
\end{equation}
where $\{ R_{n,i}^* \}_{i \geq 1}$ are iid copies of $R_n^*$ from the previous iteration, and $\{N_n\}, \{ C_{n,i} \}, \{ Q_{n}\}$ are mutually independent iid sequences of random variables; for $n = 0$, $R_{0,i}^*$ are iid copies of the initial value $R_0^*$.  

In this section we will discuss the weak convergence of $R_n^*$ to a finite random variable $R$, independently of the initial condition $R_0^*$. In other words, $R$ is the unique solution to \eqref{eq:GeneralPR} under the assumptions of Lemma~\ref{L.Convergence}. In particular, we will construct a process $R^{(n)}$ on a tree that converges a.s. to $R$. These convergence results may be of practical interest as well since ranking algorithms are implemented recursively. The actual proofs are postponed until Section~\ref{SS.Convergence}.  

\subsection{Construction of $R$ on a Tree} \label{SS.TreeConstruction}

To better understand the dynamics of our recursion, we give below a sample path construction of the random variable $R$ on a tree. First we construct a random tree $T$. We use the notation $\emptyset$ to denote the root node of $T$, and $A_n$, $n \geq 0$, to denote the set of all individuals in the $n$th generation of $T$, $A_0 = \{\emptyset\}$. Let $Z_n$ be the number of individuals in the $n$th generation, that is, $Z_n = |A_n|$, where $| \cdot |$ denotes the cardinality of a set; in particular, $Z_0 = 1$. We iteratively construct the tree as follows. Let $N^{(0)}$ be the number of individuals born to the root node $\emptyset$ and let $N^{(0)}, \{N^{(n)}_{i_1,\dots, i_n} \}_{n \geq 1}$ be iid copies of $N$. Define now 
$$A_1 = \{ i: 1 \leq i \leq N^{(0)} \}, \quad A_n = \{ (i_1, i_2, \dots, i_n): (i_1, \dots, i_{n-1}) \in A_{n-1}, 1 \leq i_n \leq N^{(n-1)}_{i_1, \dots, i_{n-1}} \},$$
and then the number of individuals $Z_n = |A_n|$ in the $n$th generation, $n \geq 1$, satisfies the following branching recursion 
$$Z_{n} = \sum_{(i_1, \dots, i_{n-1}) \in A_{n-1}} N^{(n-1)}_{i_1,\dots, i_{n-1}}.$$ 

Suppose now that individual $(i_1,\dots,i_n)$ in the tree has a weight ${\bf C}_{i_1,\dots,i_n}^{(n)}$ defined via the recursion
$$ {\bf C}_{i_1}^{(1)} =C_{i_1}^{(1)}, \qquad {\bf C}_{i_1,\dots, i_n}^{(n)} = C_{i_1,\dots, i_{n-1}, i_n}^{(n)} {\bf C}_{i_1,\dots,i_{n-1}}^{(n-1)}, \quad n \geq 2,$$
where ${\bf C}^{(0)} =1$ is the weight of the root node and the random variables $\{C_{i_1,\dots, i_n}^{(n)}: n \geq 0, i_k \geq 1\}$ are iid with the same distribution as $C$. Note that ${\bf C}_{i_1,\dots, i_n}^{(n)}$ is equal to the product of all the weights $C_\cdot^{(\cdot)}$ along the branch leading to node $(i_1, \dots, i_n)$, as depicted on the figure below.  Define now the process
$$W_n =  \sum_{(i_1,\dots, i_n) \in A_n} Q_{i_1,\dots, i_n}^{(n)} {\bf C}_{i_1, \dots, i_n}^{(n)}, \qquad n \geq 0,$$
where $A_n$ is the set of all individuals in the $n$th generation and $\{Q_{i_1,\dots,i_n}^{(n)}\}_{n \geq 0}$ is a 
sequence of iid random variables having the same distribution as $Q$ (see Figure \ref{F.Tree}), and independent of ${\bf C}_{\cdot}^{(\cdot)}$. 

\begin{center}
\begin{figure}[h]
\begin{picture}(430,160)(0,0)
\put(0,0){\includegraphics[scale = 0.8, bb = 0 510 500 700, clip]{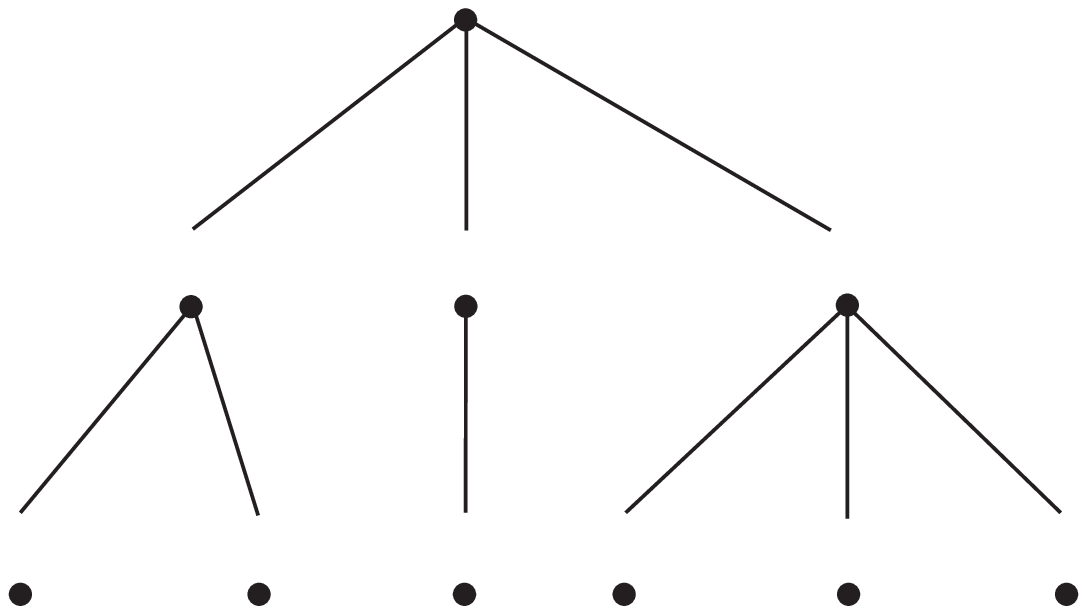}}
\put(136,150){\small ${\bf C}^{(0)}$}
\put(65,83){\small ${\bf C}_{1}^{(1)}$}
\put(127,83){\small ${\bf C}_{2}^{(1)}$}
\put(215,83){\small ${\bf C}_{3}^{(1)}$}
\put(22,17){\small ${\bf C}_{1,1}^{(2)}$}
\put(78,17){\small ${\bf C}_{1,2}^{(2)}$}
\put(126,17){\small ${\bf C}_{2,1}^{(2)}$}
\put(162,17){\small ${\bf C}_{3,1}^{(2)}$}
\put(213,17){\small ${\bf C}_{3,2}^{(2)}$}
\put(268,17){\small ${\bf C}_{3,3}^{(2)}$}
\put(350,150){\small $W_0 = Q^{(0)} {\bf C}^{(0)}$}
\put(350,135){\small $Z_0 = 1$}
\put(350,83){\small $W_1 = \sum_{i} Q_{i}^{(1)} {\bf C}_{i}^{(1)}$}
\put(350,68){\small $Z_1 = 3$}
\put(350,17){\small $W_2 = \sum_{i,j} Q_{i,j}^{(2)} {\bf C}_{i,j}^{(2)}$}
\put(350,2){\small $Z_2 = 6$}
\end{picture}
\caption{Construction on a tree}\label{F.Tree}
\end{figure}
\end{center}

Observe that when $C_\cdot^{(\cdot)} \equiv 1$ and $Q_\cdot^{(\cdot)} \equiv 1$, $W_n$ is equal to the number of individuals in the $n$th generation of the corresponding branching process, and in particular $Z_n = W_n$. Otherwise, $W_n$ represents the sum of the weights of all the individuals in the $n$th generation. Related processes known as weighted branching processes have been considered in the existing literature \cite{Rosler_93, Liu_98, Kuhl_04} and are discussed in more detail in Section~\ref{SS.RelatedProcesses}. With a small abuse of notation we also refer to our more restrictive processes as WBPs. 

Define the process $\{R^{(n)}\}_{n \geq 0}$ according to
$$R^{(n)} = \sum_{k=0}^n W_k , \qquad n \geq 0,$$
that is, $R^{(n)}$ is the sum of the weights of all the individuals on the tree. Clearly, when $Q_\cdot \equiv 1$ and $C_\cdot^{(\cdot)} \equiv 1$, $R^{(n)}$ is simply the number of individuals in a branching process up to the $n$th generation. We define the random variable $R$ according to
\begin{equation} \label{eq:R_Def}
R \triangleq \lim_{n \to \infty} R^{(n)} = \sum_{k=0}^\infty W_k.
\end{equation}
Furthermore, it is not hard to see that $R^{(n)}$ satisfies the recursion
\begin{equation} \label{eq:MainRec}
R^{(n)} = \sum_{j=1}^{N^{(0)}} C_{j}^{(1)} R^{(n-1)}_{j} + Q^{(0)},
\end{equation}
for $n \geq 1$, where $\{R_{j}^{(n-1)}\}$ are independent copies of $R^{(n-1)}$ corresponding to the tree starting with individual $j$ in the first generation and ending on the $n$th generation; note that $R_j^{(0)} = Q_j^{(1)}$.

Moreover, since the tree structure repeats itself after the first generation, $W_n$ satisfies
\begin{align*}
W_n &= \sum_{(i_1,\dots,i_n) \in A_n} Q_{i_1,\dots,i_n} ^{(n)} {\bf C}_{i_1,\dots,i_n}^{(n)} \\
&= \sum_{k = 1}^{N^{(0)}} C_{k}^{(1)}  \sum_{(k,\dots,i_n) \in A_n} Q_{k,\dots,i_n} ^{(n)} {\bf C}_{k,\dots,i_n}^{(2,n)} \\
&\stackrel{\mathcal{D}}{=} \sum_{k=1}^N C_k W_{n-1,k},
\end{align*}
where $N$, $C_k$, $W_{n-1,k}$ are independent of each other and of all other random variables, $ W_{n-1,k}$ 
has the same distribution as $W_{n-1}$, and 
${\bf C}_{k,\dots,i_n}^{(2,n)}=\prod_{j=2}^n C_{k, i_2,\dots, i_j}^{(j)} $, 
i.e., if $C_{k}^{(1)} >0$, then ${\bf C}_{k,\dots,i_n}^{(2,n)}={\bf C}_{k,\dots,i_n}^{(n)} / C_{k}^{(1)}$.

\subsection{Connection between $R_n^*$ and $R^{(n)}$} \label{SS.Connection}

We now connect the two processes $R_n^*$ and $R^{(n)}$, the one obtained by iterating \eqref{eq:GeneralPR} and the one obtained from the tree construction, respectively. To do this define
$$W_n(R_0^*) =  \sum_{(i_1,\dots, i_n) \in A_n} R^*_{0,(i_1,\dots, i_n)} {\bf C}_{i_1,\dots, i_n}^{(n)},$$
where $R^*_{0, (\cdot)}$ are iid copies of the initial condition $R_0^*$, independent of ${\bf C}_\cdot^{(n)}$, and the weights ${\bf C}^{(n)}_{\cdot}$ are the ones defined in Section~\ref{SS.TreeConstruction}. In words, $W_n(R_0)$ is the sum of all the weights in the $n$th generation of the tree with the coefficients $Q_{\cdot}^{(n)}$ substituted by the corresponding $R^*_{0,(\cdot)}$. We claim that
$$R_n^* \stackrel{\mathcal{D}}{=} R^{(n-1)} + W_n(R_0^*).$$
To see this note that for $n = 1$,
$$R_1^* = Q_0 + \sum_{i=1}^{N_0} C_{0,i} R_{0,i}^* \stackrel{\mathcal{D}}{=} Q^{(0)} {\bf C}^{(0)} + \sum_{i=1}^{N^{(0)}} {\bf C}_i^{(1)} R_{0,i}^* =  R^{(0)} + W_1(R_0^*) \qquad \text{(recall ${\bf C}^{(0)} = 1$)},$$
and by induction in $n$, 
\begin{align*}
R_{n+1}^* &= Q_{n} + \sum_{i=1}^{N_n} C_{n,i} R_{n,i}^* \\
&\stackrel{\mathcal{D}}{=} Q_n + \sum_{i=1}^{N_n} C_{n,i} (R^{(n-1)}_{i} + W_{n,i}(R_0^*)) \qquad \text{(by induction)} \\
&\stackrel{\mathcal{D}}{=} Q^{(0)}  + \sum_{i=1}^{N^{(0)}} C_i^{(1)} \left( R_i^{(n-1)} +  \sum_{(i,i_1,\dots,i_n) \in A_{n+1}} 
R_{0,(i,i_1,\dots,i_n)}^* {\bf C}_{i,i_1,\dots,i_n}^{(2,n+1)} \right)  \\
&= R^{(n)} + \sum_{i=1}^{N^{(0)}}  \sum_{(i,i_1,\dots,i_n) \in A_{n+1}} R_{0,(i,i_1,\dots,i_n)}^* {\bf C}_{i,i_1,\dots,i_n}^{(n+1)}  \\
&= R^{(n)} + W_{n+1}(R_0^*) ,
\end{align*} 
where $R_i^{(n-1)}$ corresponds to the process $R^{(n-1)}$ obtained from the tree starting with individual $i$ in the first generation (a descendent of the root) and ending on the $n$th generation, and $(R^{(n-1)}_i, W_{n,i}(R_0^*))$ are iid copies of $(R^{(n-1)}, W_n(R_0^*))$.  Since $R^{(n-1)} \to R$ a.s., it will follow from Slutsky's Theorem (see Theorem 1, p. 254 in \cite{ChowTeich1988}) that if $W_{n}(R_0^*) \Rightarrow 0$, then
$$R_n^* \Rightarrow R,$$
where $\Rightarrow$ denotes convergence in distribution. The proof of this convergence and that of the finiteness of $R$ are given in Section \ref{SS.Convergence}.  Understanding the asymptotic properties of the distribution of $R$, as defined by \eqref{eq:R_Def}, is the main objective of this paper.

\subsection{Related Processes}
\label{SS.RelatedProcesses}

As we mentioned earlier, the stochastic equation defined in \eqref{eq:GeneralPR} leads to the analysis of a process known in the literature as a weighted branching process (WBP). WBPs were introduced by R\"{o}sler \cite{Rosler_93} in a construction that  is more general than ours. More precisely, each individual in the tree has potentially an infinite number of offsprings, and each offspring inherits a certain (nonnegative) weight from its parent and multiplies it by a factor $T_i$, where the index $i$ refers to his birth order (i.e., a first born multiplies his inheritance by $T_1$, a second born by $T_2$, etc.). Each individual branches independently, using an independent copy of the sequence $T_1, T_2, \dots$. However, within the sequence, $T_1, T_2, \dots$ can be dependent. Only individuals whose weights are different than zero are considered to be alive. The construction we give in this paper would correspond to having
$$T_i = C_i 1_{(N \geq i)}.$$
The definition of a WBP described above leads to the following stochastic recursion for the total weight of the $n$th generation, 
\begin{equation} \label{eq:RoslerHomogeneous}
W_n \stackrel{\mathcal{D}}{=} \sum_{i=1}^\infty T_i W_{n-1,i}
\end{equation}
and a corresponding nonhomogeneous fixed point equation of the form
\begin{equation} \label{eq:RoslerNonHomogeneous}
R \stackrel{\mathcal{D}}{=} \sum_{i=1}^\infty T_i R_i + Q.
\end{equation}
In the construction given in \cite{Rosler_93}, the $\{T_i\}$ and $Q$ are allowed to be dependent as well. 

We now briefly describe some of the existing literature on WBPs, most of which considers the homogeneous equation, i.e. $Q \equiv 0$. The nonhomogeneous equation has only been studied for the special case when $Q$ and the $\{T_i\}$ are deterministic constants. In particular, Theorem 5 of \cite{Rosler_93} analyzes the solutions to \eqref{eq:RoslerNonHomogeneous} when $Q$ and the $\{T_i\}$ are nonnegative deterministic constants, which implies that $T_i \leq 1$ for all $i$ and $\sum_{i} T_i^\alpha \log T_i \leq 0$ for all $\alpha  > 0$, falling outside of the scope of this paper. More results about the solutions to \eqref{eq:RoslerNonHomogeneous} for the case when $Q$ and the $T_i$'s are real valued deterministic constants were derived in \cite{Alsm_Rosl_05}. 

Regarding the homogeneous equation, in \cite{Rosler_93}, the martingale structure of $W_n/m^n$ ($m = E[\sum_i T_i  ]$) was used to point out the existence of $W = \lim_{n\to \infty} W_n / m^n$, and it was shown that positive stable distributions with $\alpha \in (0,2)$ arise when $E\left[ \sum_i T_i^\alpha \right] = 1$ and some additional moment conditions are satisfied. Furthermore, for a detailed analysis of the case when $W$ follows a positive stable distribution $(0 < \alpha \leq 1)$ see \cite{Liu_98}. The convergence of $W_n/ m^n$ to $W$ was studied in \cite{Ros_Top_Vat_00}, and conditions for $W$ to belong to the domain of attraction of an $\alpha$-stable law $(1 < \alpha < 2)$ were given in \cite{Ros_Top_Vat_00}, along with an analysis of the rate of convergence. A generalization of the WBP described in \cite{Rosler_93} to a random environment was given in \cite{Kuhl_04}, where necessary and sufficient conditions for $W$ to be nondegenerate were derived.  The existence of moments of $W$ was studied in \cite{Alsm_Kuhl_07}.  The power law tail of $W$ for the critical case $E\left[ \sum_{i=1}^N C_i\right] = 1$ and  $\alpha > 1$ was derived in Theorem~2.2 of \cite{Liu_00} and Proposition 7 of \cite{Iksanov_04}.  For an even longer list of references to WBPs and related work see \cite{Kuhl_04} and \cite{Alsm_Rosl_05}. 

From the discussion above it is clear that the prior literature on WBPs is extensive, but we point out that 
the more specific structure of our model, given by \eqref{eq:GeneralPR}, as well as our novel analysis via implicit renewal theory, allow us to characterize the asymptotic power law behavior of the distribution of $R$ for all $\alpha > 0$ when the $\{C_i\}$ dominate the tail. In addition, we study the nonhomogeneous equation \eqref{eq:RoslerNonHomogeneous}, while the preceding work primarily focuses on the homogeneous case \eqref{eq:RoslerHomogeneous}. The case when $N$ dominates the tail, which is important for the page ranking problem, has not been considered until very recently in \cite{Volk_Litv_Dona_07} and \cite{Volk_Litv_08}. In reference to the latter work, our analysis is based on a new sample path approach, while the studies in \cite{Volk_Litv_Dona_07, Volk_Litv_08} use transforms and tauberian theorems as well as somewhat different assumptions.   We will provide more details on these connections throughout the paper in remarks after the corresponding theorems. 

From a different mathematical perspective, our model also constitutes a generalization of several important types of processes. For instance, by setting $N \equiv 1$, \eqref{eq:Rstar_Rec} reduces to an autoregressive process of order one. Also, by letting $N$ be a Poisson random variable and fixing $C_i \equiv 1$, $Q \equiv 1$, \eqref{eq:GeneralPR} becomes the recursion that the number of customers in a busy period of an M/G/1 queue satisfies.  Recursion \eqref{eq:PR} and its connection to the busy period when the weights $D_i$ are equal to a deterministic constant was exploited in \cite{Lit_Sch_Volk_07}. 

It is worth noting that probabilistic sample path approaches for the busy period ($C_i \equiv 1$, $Q \equiv 1$) were developed in \cite{Zwart_01, Jel_Mom_04, Bal_Dal_Klu_04}; the work in \cite{Zwart_01, Jel_Mom_04} is also relying on the theory of cycle maximum \cite{Asm_98}.  However, for our more general model (random $C_i$'s) it is not clear if there is a tractable way of generalizing this analysis.  Instead of pursuing the preceding directions, we develop a direct sample path large deviation analysis for recursive random sums that provides greater generality.

\section{Moments of $W_n$}
\label{S.Moments}

In this section we provide explicit estimates for the moments of the total weight, $W_n$, of the $n$th generation that will be used throughout the paper. In particular, we apply these estimates in Section \ref{SS.Convergence} to prove that $R_n^* \Rightarrow R$ where $R < \infty$ a.s. Our estimates may be of independent interest due to their explicit nature.

A simple calculation shows that provided $E[N], E[Q], E[C] < \infty$, then $E[W_n] < \infty$ and is given by
$$E[W_n] = E[N] E[C] E[W_{n-1}] = (E[N] E[C])^{n} E[W_0] = (E[N] E[C])^{n} E[Q].$$
We give below upper bounds on the general moments of $W_n$. 

Throughout the paper we will use $K$ to denote a large positive constant that may be different in different places, say $K = K/2$, $K = K^2$, etc.

\begin{lem} \label{L.MomentSmaller_1}
Suppose $E[Q^\beta] E[N] E[C^\beta]  < \infty$ for $0 < \beta \leq 1$, then
$$E[ W_n^\beta ] \leq (E[C^\beta] E[N])^{n} E[Q^\beta]$$
for all $n \geq 0$. 
\end{lem}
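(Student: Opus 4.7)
My plan is to proceed by induction on $n$, using the distributional recursion $W_n \stackrel{\mathcal{D}}{=} \sum_{k=1}^N C_k W_{n-1,k}$ derived in Section~\ref{SS.TreeConstruction}, together with the elementary subadditivity inequality $\left(\sum_i a_i\right)^\beta \leq \sum_i a_i^\beta$, which holds for any finite collection of nonnegative numbers $\{a_i\}$ whenever $0 < \beta \leq 1$.

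For the base case $n=0$, observe that $W_0 = Q^{(0)} {\bf C}^{(0)} = Q^{(0)}$ since ${\bf C}^{(0)} = 1$, so $E[W_0^\beta] = E[Q^\beta]$, which matches $(E[C^\beta]E[N])^0 E[Q^\beta]$. For the inductive step, assume the claimed bound holds at level $n-1$. Using the distributional recursion followed by subadditivity,
\begin{equation*}
E[W_n^\beta] \;=\; E\!\left[\Big(\sum_{k=1}^N C_k W_{n-1,k}\Big)^{\!\beta}\,\right] \;\leq\; E\!\left[\sum_{k=1}^N C_k^\beta W_{n-1,k}^\beta\right].
\end{equation*}
Conditioning on $N$ and invoking the independence of $N$, $\{C_k\}$, and $\{W_{n-1,k}\}$ (together with the identical distribution of the summands), a Wald-type identity yields
\begin{equation*}
E\!\left[\sum_{k=1}^N C_k^\beta W_{n-1,k}^\beta\right] = E[N]\,E[C^\beta]\,E[W_{n-1}^\beta].
\end{equation*}
Substituting the inductive hypothesis $E[W_{n-1}^\beta] \leq (E[C^\beta]E[N])^{n-1} E[Q^\beta]$ produces the desired bound at level $n$.

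There is no real obstacle here; the argument is a clean induction that rests entirely on the subadditivity of $x \mapsto x^\beta$ for $\beta \in (0,1]$ and on the independence structure that was already set up in the tree construction. The only minor care needed is to verify that all the expectations appearing along the way are finite — which follows from the standing hypothesis $E[Q^\beta] E[N] E[C^\beta] < \infty$ — so that the interchanges of expectation and summation (Tonelli) and the applications of Wald's identity are legitimate.
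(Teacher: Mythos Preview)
Your proof is correct and follows essentially the same approach as the paper: both use the distributional recursion $W_n \stackrel{\mathcal{D}}{=} \sum_{i=1}^N C_i W_{n-1,i}$ together with the subadditivity inequality $\left(\sum_i y_i\right)^\beta \leq \sum_i y_i^\beta$ for $0 < \beta \leq 1$, then independence to factor the expectation. The paper's proof is simply terser, leaving the induction and the Wald-type step implicit.
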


\begin{proof}
Simply note that
\begin{align}
E[W_n^\beta] &= E\left[ \left( \sum_{i=1}^N C_i W_{n-1,i} \right)^\beta \right] 
\end{align}
and use the well known inequality $\left( \sum_{i=1}^k y_i \right)^\beta \leq \sum_{i=1}^k y_i^\beta$ for $0 < \beta \leq 1$, $y_i \geq 0$ (see e.g., Exercise 4.2.1, p.~102, in \cite{ChowTeich1988}). 
\qed\end{proof}

\bigskip

The lemma for moments greater than one is given below. 

\begin{lem} \label{L.GeneralMoment}
Suppose $E[Q^\beta]< \infty$, $E[N^\beta] < \infty$, and $E[N] \max\{ E[C^\beta], E[C] \} < 1$ for some $\beta > 1$. Then, there exists a constant $K_\beta > 0$ such that 
\begin{equation*} 
E[ W_n^\beta ] \leq K_\beta (E[N] \max\{ E[C^\beta], E[C]\}  )^{n} 
\end{equation*}
for all $n \geq 0$. 
\end{lem}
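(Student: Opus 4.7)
I would proceed by induction on $n$, based on the distributional recursion $W_n \stackrel{\mathcal{D}}{=} \sum_{k=1}^N C_k W_{n-1,k}$ derived at the end of Section~\ref{SS.TreeConstruction}, in which $Y_k := C_k W_{n-1,k}$ are iid given $N$ and independent of $N$. The base case $n=0$ holds because $E[W_0^\beta]=E[Q^\beta]<\infty$, so any $K_\beta \geq E[Q^\beta]$ works.

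For the inductive step, set $\mu_Y := E[Y_1]=E[C]E[W_{n-1}]$ and decompose $\sum_{k=1}^N Y_k = N\mu_Y + \sum_{k=1}^N(Y_k-\mu_Y)$. Applying the elementary inequality $(a+b)^\beta \leq a^\beta + \beta 2^{\beta-1}\bigl(a^{\beta-1}|b|+|b|^\beta\bigr)$, valid for $a\geq 0$, $a+b\geq 0$, $\beta\geq 1$, with $a=N\mu_Y$ and $b=\sum_{k=1}^N(Y_k-\mu_Y)$, and then taking conditional expectation given $N$, produces three terms: a leading term $(N\mu_Y)^\beta$, a cross term $(N\mu_Y)^{\beta-1}E[|\sum(Y_k-\mu_Y)|\mid N]$, and a centered-moment term $E[|\sum(Y_k-\mu_Y)|^\beta\mid N]$. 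The centered moment is bounded by a constant multiple of $NE[Y_1^\beta]=NE[C^\beta]E[W_{n-1}^\beta]$ via the von Bahr--Esseen inequality when $1<\beta\leq 2$ (and via Rosenthal's inequality when $\beta>2$), while the cross term is controlled by Jensen's inequality through $E[|\sum(Y_k-\mu_Y)|\mid N]\leq \bigl(E[|\sum(Y_k-\mu_Y)|^\beta\mid N]\bigr)^{1/\beta}$.

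Taking the outer expectation in $N$, the dominant contribution is $c_\beta E[N]E[C^\beta]E[W_{n-1}^\beta]\leq c_\beta\, m\,E[W_{n-1}^\beta]$, which combined with the induction hypothesis gives $c_\beta K_\beta m^n$. The remaining contributions involve factors of the form $E[N^p](E[C]E[W_{n-1}])^q$ with $p\in\{\beta-1+1/\beta,\,\beta\}$ and $q\in\{\beta-1,\,\beta\}$; since $E[W_{n-1}]=(E[N]E[C])^{n-1}E[Q]$ and $E[N]E[C]\leq m<1$ with $\beta>1$, each such factor is of order $m^{\beta(n-1)}=o(m^n)$ as $n\to\infty$, and $\sup_{n\geq 1}m^{\beta(n-1)}/m^n=m^{-\beta}<\infty$. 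Choosing $K_\beta$ sufficiently large (depending only on $\beta$, the constants $c_\beta$ from the moment inequalities, and the finite quantities $E[Q^\beta]$, $E[N^\beta]$, $E[C]$) absorbs these remainder terms and closes the induction.

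The main obstacle I expect is the case $\beta>2$, where Rosenthal's inequality introduces an extra $(NE[Y_1^2])^{\beta/2}$ contribution that does not directly fit into either of the forms above. Because $E[Y_1^2]=E[C^2]E[W_{n-1}^2]$, the cleanest route is to first establish the conclusion for $\beta=2$ by a direct variance computation using $E[W_n^2]=E[N]E[C^2]E[W_{n-1}^2]+E[N(N-1)](E[C]E[W_{n-1}])^2$, and then to use the resulting bound $E[W_{n-1}^2]\leq K_2\,m_2^{n-1}$ inside the $\beta>2$ induction; alternatively, a Lyapunov interpolation $E[Y_1^2]\leq (E[Y_1])^{(\beta-2)/(\beta-1)}(E[Y_1^\beta])^{1/(\beta-1)}$ expresses the Rosenthal remainder as a mixed term that can be verified, by the same argument as above, to be $o(m^n)$.
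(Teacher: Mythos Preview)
Your induction does not close as written. After centering and applying either von~Bahr--Esseen or Rosenthal, the coefficient in front of the ``leading'' term $E[N]E[C^\beta]E[W_{n-1}^\beta]$ is a constant $c_\beta>1$ (coming from the factor $\beta 2^{\beta-1}$ in your elementary inequality, the constant in the moment inequality for centered sums, and the passage from $E|Y_1-\mu_Y|^\beta$ back to $E[Y_1^\beta]$). Plugging in the induction hypothesis therefore yields
\[
E[W_n^\beta]\;\le\; c_\beta\,K_\beta\,m^{n}\;+\;(\text{lower order terms}),
\]
and no choice of $K_\beta$ can force $c_\beta K_\beta m^{n}\le K_\beta m^{n}$ when $c_\beta>1$. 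Iterating the recursion $E[W_n^\beta]\le c_\beta m\,E[W_{n-1}^\beta]+R_n$ instead of using the hypothesis does not help either: it produces the rate $(c_\beta m)^n$, which need not be $\le m^n$ and need not even be $<1$, since the assumption is only $m<1$.

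The paper sidesteps exactly this issue. Rather than centering, it uses the exact decomposition
\[
E\Bigl[\Bigl(\sum_{i=1}^k Y_i\Bigr)^\beta\Bigr]
= E\Bigl[\Bigl(\sum_{i=1}^k Y_i\Bigr)^\beta-\sum_{i=1}^k Y_i^\beta\Bigr]
+ k\,E[Y^\beta],
\]
so that the coefficient on $E[N]E[C^\beta]E[W_{n-1}^\beta]$ is \emph{exactly}~$1$, giving the clean recursion $E[W_n^\beta]\le \rho_\beta\,E[W_{n-1}^\beta]+(\text{remainder})$. The remainder is then bounded, via Lemma~\ref{L.Alpha_Moments}, by $E[N^\beta]\,E[Y^{p-1}]^{\beta/(p-1)}$ with $p=\lceil\beta\rceil$, i.e.\ in terms of a \emph{strictly lower} integer moment of $W_{n-1}$; that moment is handled first by a separate integer-moment lemma. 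Your proposal can be repaired along the same lines: replace the centering step by this subtract-the-diagonal decomposition, prove the integer case first, and feed it into the non-integer case.
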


The proof of Lemma \ref{L.GeneralMoment} is given in Section \ref{SS.Moments_Proofs}. 

\bigskip

{\sc Remark:} Recall that when $C \equiv 1$ and $Q \equiv 1$ then $E[W_n^\beta]$ is the $\beta$-moment of a subcritical branching process $Z_n$ and our result reduces to $E[Z_n^\beta] \leq K_\beta (E[N])^n$, which is in agreement with the classical results from branching processes, e.g. see Corollary 1 on p. 18 of \cite{Athreya_Ney_2004}. Moreover, from the proof of the integer $\beta$ case (given in Section \ref{SS.Moments_Proofs}), it is clear that $E[W_n^\beta]$ scales as $\rho^{\beta n}$ if $\rho^\beta > \rho_\beta$ and as $\rho_\beta^n$ if $\rho^\beta < \rho_\beta$, where $\rho = E[N] E[C]$ and $\rho_\beta = E[N] E[C^\beta]$. Note that this is not quite the same as our upper bounds, and the reason we choose the geometric term $(\rho \vee \rho_\beta)^n$ instead is that it makes the proofs simpler and is sufficient for our purposes. Similar techniques to those used in proving the preceding lemmas can yield, with some additional work, lower bounds for the $\beta$-moments of $W_n$, showing that the correct leading term is $(\rho^\beta \vee \rho_\beta)^n$.

More technical results dealing with the existence of the $\beta$-moments of $W \triangleq \lim_{n \to \infty} W_n/ \rho^n$ can be found in \cite{Alsm_Kuhl_07}.  There, necessary and sufficient conditions are given for the finiteness of $E[W^\beta L(W)]$ when $\beta \geq 1$ and $L(\cdot)$ is slowly varying (see Theorems 1.2 and 1.3). In particular, the approach the authors take is to first normalize the process so that $\rho = E[W_1] = 1$, and then impose a condition that in our case reduces to $\rho_\beta = E[N] E[C^\beta]  < 1$, that is, they preclude the situation where $W_n^\beta$ might scale as $\rho_\beta^n$ when $\rho^\beta < \rho_\beta$. An example where $E[W_n^\beta]$ scales as $\rho_\beta^n$ is when $N \equiv 1$, since then $W_n^\beta \stackrel{\mathcal{D}}{=} Q^\beta \prod_{i=1}^n C_i^\beta$. 

Furthermore, observe that when $\rho = 1$ and $\rho_\beta < 1$ for $\beta > 1$, our proof of the lemma shows that $\limsup_{n \to \infty} E[W_n^\beta ] < \infty$, but it does not converge to zero, which is in agreement with \cite{Alsm_Kuhl_07}. However, since we study $R$, it is necessary to have $\rho < 1$ for the finiteness of $E[R^\beta]$.   Otherwise, if $\rho =1$, $\rho_\beta < 1$, $\beta > 1$, then $E[R^{(n)}] = n E[Q]$ which by monotone convergence and \eqref{eq:R_Def} implies that $E[R] = \infty$, and therefore, by convexity, $E[R^\beta] = \infty$.

\subsection{Convergence of $R_n^*$ and finiteness of $R$} \label{SS.Convergence}

As discussed in Section \ref{SS.Connection}, there are two issues regarding the process $R_n^*$ that remain to be addressed. One, is the proof that 
$$R_n^* \Rightarrow R = \sum_{k=0}^\infty W_k$$
for any initial condition $R_0^*$; the other one is the finiteness of $R$. The lemma below shows that $R< \infty$ a.s. 

\begin{lem} \label{L.Stability}
Suppose that $E[Q^\beta] < \infty$, $E[N^\beta] < \infty$, and either $E[N] E[C^\beta] < 1$ for some $0 < \beta < 1$, or $E[N]\max\{ E[C], E[C^\beta]\} < 1$ for some $\beta \geq 1$. Then, $E[R^\gamma] < \infty$ for all $0  < \gamma \leq \beta$, and in particular, $R < \infty$ a.s. Moreover, if $\beta \geq 1$, $R^{(n)} \stackrel{L_\beta}{\to} R$, where $L_\beta$ stands for convergence in $(E|\cdot|^\beta)^{1/\beta}$ norm. 
\end{lem}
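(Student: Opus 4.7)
The plan is to bound $R^\beta$ (or its $L_\beta$ norm) by a geometric series whose terms come from the moment estimates of Lemmas~\ref{L.MomentSmaller_1} and~\ref{L.GeneralMoment}, and then deduce $L_\gamma$ control for $\gamma\le\beta$ by Jensen/Lyapunov. The split into $\beta \le 1$ and $\beta \ge 1$ is natural because the tools for bounding $E[(\sum_k W_k)^\beta]$ are different in those two regimes.

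First I would treat the case $0<\beta\le 1$. Using the elementary inequality $(\sum_i y_i)^\beta\le\sum_i y_i^\beta$ for nonnegative summands applied to $R=\sum_{k=0}^{\infty}W_k$, together with monotone convergence and Lemma~\ref{L.MomentSmaller_1}, I get
\begin{equation*}
E[R^\beta]\le\sum_{k=0}^{\infty}E[W_k^\beta]\le E[Q^\beta]\sum_{k=0}^{\infty}(E[N]E[C^\beta])^{k}<\infty,
\end{equation*}
since $E[N]E[C^\beta]<1$ by assumption. In particular $R<\infty$ a.s. For $0<\gamma\le\beta$, Lyapunov's inequality (or Jensen applied to the concave map $x\mapsto x^{\gamma/\beta}$) gives $E[R^\gamma]\le (E[R^\beta])^{\gamma/\beta}<\infty$.

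Next, for $\beta\ge 1$, the subadditivity argument is replaced by Minkowski's inequality. Writing $\|X\|_\beta=(E|X|^\beta)^{1/\beta}$ and invoking Lemma~\ref{L.GeneralMoment}, I would estimate
\begin{equation*}
\|R\|_\beta\le\sum_{k=0}^{\infty}\|W_k\|_\beta\le K_\beta^{1/\beta}\sum_{k=0}^{\infty}\bigl(E[N]\max\{E[C^\beta],E[C]\}\bigr)^{k/\beta}<\infty,
\end{equation*}
because the ratio inside the parentheses is strictly less than one. The same Minkowski estimate applied to the tail $R-R^{(n)}=\sum_{k=n+1}^{\infty}W_k$ yields $\|R-R^{(n)}\|_\beta\le K_\beta^{1/\beta}\sum_{k>n}r^{k/\beta}\to 0$ as $n\to\infty$, where $r=E[N]\max\{E[C^\beta],E[C]\}<1$, giving $R^{(n)}\stackrel{L_\beta}{\to}R$. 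Finiteness of $E[R^\gamma]$ for $0<\gamma\le\beta$ follows again by Lyapunov.

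I do not expect a serious obstacle here: the nontrivial work sits in Lemmas~\ref{L.MomentSmaller_1} and~\ref{L.GeneralMoment}, and once those geometric bounds on $E[W_n^\beta]$ are in hand the passage to $R$ is just a matter of choosing the right $L^\beta$-triangle inequality in each regime. The one point requiring a touch of care is to justify the interchange of sum and expectation that produces the bound $E[R^\beta]\le\sum_k E[W_k^\beta]$ when $\beta\le 1$; this is handled by applying the subadditivity bound to the partial sums $R^{(n)}$ and passing to the limit via monotone convergence, which is legitimate because all $W_k\ge 0$.
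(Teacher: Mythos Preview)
Your proposal is correct and follows essentially the same route as the paper: subadditivity of $x\mapsto x^\beta$ plus Lemma~\ref{L.MomentSmaller_1} for $\beta\le 1$, Minkowski plus Lemma~\ref{L.GeneralMoment} for $\beta\ge 1$, Lyapunov for $\gamma\le\beta$, and the tail Minkowski estimate for $L_\beta$ convergence. The paper's argument is organized identically, with monotone convergence used to justify passing the expectation through the limit of partial sums.
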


\begin{proof}
Let 
$$\eta = \begin{cases} E[N] E[C^\beta], & \text{ if }\beta < 1 \\  E[N] \max\{E[C], E[C^\beta] \}, & \text{ if } \beta \geq 1. \end{cases}$$ 
Then by Lemmas \ref{L.MomentSmaller_1} and \ref{L.GeneralMoment}, 
\begin{equation} \label{eq:EW_n}
E[W_n^\beta] \leq K \eta^n 
\end{equation}
for some $K > 0$. Suppose $\beta \geq 1$, then, by monotone convergence and Minkowski's inequality, 
\begin{align*}
E[R^\beta] &= E\left[ \lim_{n\to\infty} \left(\sum_{k=0}^n W_k \right)^\beta  \right] = \lim_{n\to \infty} E\left[ \left(\sum_{k=0}^n W_k\right)^\beta  \right] \\
&\leq \lim_{n\to\infty} \left( \sum_{k=0}^n E[W_k^\beta]^{1/\beta} \right)^\beta \leq K \left( \sum_{k=0}^\infty \eta^{k/\beta}   \right)^\beta < \infty.
\end{align*}
This implies that $R < \infty$ a.s. When $0 < \beta < 1$ use the inequality $\left( \sum_{k=0}^n y_k \right)^\beta \leq \sum_{k=0}^n y_k^\beta$ for any $y_i \geq 0$ instead of Minkowski's inequality. Furthermore, for any $0 < \gamma \leq \beta$, 
$$E[R^\gamma] = E\left[ (R^\beta)^{\gamma/\beta}\right] \leq \left(E[R^\beta] \right)^{\gamma/\beta} < \infty.$$
That $R^{(n)} \stackrel{L_\beta}{\to} R$ whenever $\beta \geq 1$ follows from noting that $E[|R^{(n)} - R|^\beta] = E\left[ \left( \sum_{k = n+1}^\infty W_k \right)^\beta \right]$ and applying the same arguments used above to obtain the bound $E[|R^{(n)} - R|^\beta] \leq K \eta^{n+1}/(1 - \eta^{1/\beta})^\beta$. 
\qed\end{proof}

\bigskip

Next, by monotone convergence in equation \eqref{eq:MainRec} it can be verified that $R$ must solve
$$R \stackrel{\mathcal{D}}{=} \sum_{i=1}^N C_i R_i + Q,$$
where $\{R_i\}_{i \geq 1}$ are iid copies of $R$, independent of $N$, $Q$, and $\{C_i\}$. 

We now turn our attention to the proof of the convergence of $R_n^*$ to $R$. Recall from Section \ref{SS.Connection} that
\begin{equation} \label{eq:connection}
R_n^* \stackrel{\mathcal{D}}{=} R^{(n-1)} + W_n(R_0^*),
\end{equation}
where
$$W_n(R_0^*) = \sum_{(i_1,\dots,i_n) \in A_n} R_{0,(i_1,\dots,i_n)}^* {\bf C}_{i_1,\dots,i_n}^{(n)}.$$
The following lemma shows that $R_n^* \Rightarrow R$ for any initial condition $R_0^*$ satisfying a moment assumption. 

\begin{lem} \label{L.Convergence}
For any $R_0^* \geq 0$, if $E[Q^\beta]< \infty$, $E[(R_0^*)^\beta] < \infty$ and $E[N] E[C^\beta] < 1$ for some $0 < \beta \leq 1$, then
$$R_n^* \Rightarrow R,$$
with $E[R^\beta] < \infty$. Furthermore, under these assumptions, the distribution of $R$ is the unique solution with finite $\beta$ moment to recursion \eqref{eq:GeneralPR}. 
\end{lem}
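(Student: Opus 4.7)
The plan is to piece together (i) the distributional identity from Section~\ref{SS.Connection}, (ii) the moment bound from Lemma~\ref{L.MomentSmaller_1}, and (iii) a Slutsky argument. Specifically, recall that
$$R_n^* \stackrel{\mathcal{D}}{=} R^{(n-1)} + W_n(R_0^*),$$
and that by construction $R^{(n-1)} \uparrow R$ almost surely. Hence the only thing one really needs to show for the weak-convergence part is that $W_n(R_0^*) \to 0$ in probability.

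For that step I would observe that $W_n(R_0^*)$ has exactly the same form as $W_n$ with the innovation sequence $\{Q_{i_1,\dots,i_n}^{(n)}\}$ replaced by the iid sequence $\{R_{0,(i_1,\dots,i_n)}^*\}$, which is independent of the weights ${\bf C}_\cdot^{(n)}$. Since $0<\beta\le 1$, the subadditivity inequality $(\sum y_i)^\beta \le \sum y_i^\beta$ used in the proof of Lemma~\ref{L.MomentSmaller_1} gives verbatim
$$E[W_n(R_0^*)^\beta] \;\le\; (E[N]E[C^\beta])^n\, E[(R_0^*)^\beta].$$
Because $E[N]E[C^\beta]<1$ and $E[(R_0^*)^\beta]<\infty$, this bound tends to $0$ geometrically, so by Markov's inequality $W_n(R_0^*) \Rightarrow 0$. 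Combined with $R^{(n-1)}\to R$ a.s., Slutsky's theorem yields $R_n^* \Rightarrow R$.

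For the finiteness $E[R^\beta]<\infty$, I would apply the same subadditivity to the a.s. monotone limit $R=\sum_{k=0}^\infty W_k$: by monotone convergence and Lemma~\ref{L.MomentSmaller_1},
$$E[R^\beta] \;\le\; \sum_{k=0}^\infty E[W_k^\beta] \;\le\; E[Q^\beta] \sum_{k=0}^\infty (E[N]E[C^\beta])^k \;<\;\infty.$$
Finally, for uniqueness, I would argue that if $\tilde R$ is any solution to \eqref{eq:GeneralPR} with $E[\tilde R^\beta]<\infty$, one can run the iteration \eqref{eq:Rstar_Rec} from the initial condition $R_0^* \stackrel{\mathcal{D}}{=} \tilde R$. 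Since $\tilde R$ is a fixed point of the recursion in distribution, $R_n^* \stackrel{\mathcal{D}}{=} \tilde R$ for every $n$; on the other hand, the first part of the lemma applied with this initial condition (whose $\beta$-moment is finite) gives $R_n^* \Rightarrow R$. Therefore $\tilde R \stackrel{\mathcal{D}}{=} R$, establishing uniqueness within the class of finite-$\beta$-moment solutions.

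There is no real obstacle here since the heavy lifting has already been done in Lemmas~\ref{L.MomentSmaller_1} and~\ref{L.Stability} and in the tree construction; the only mildly delicate point is recognizing that the bound of Lemma~\ref{L.MomentSmaller_1} applies to $W_n(R_0^*)$ with $E[(R_0^*)^\beta]$ replacing $E[Q^\beta]$, which follows because the proof of that lemma uses only the iid structure of the innovations and the subadditivity inequality for $\beta\le 1$.
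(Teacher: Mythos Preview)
Your proposal is correct and follows essentially the same route as the paper: the distributional identity $R_n^* \stackrel{\mathcal{D}}{=} R^{(n-1)} + W_n(R_0^*)$, the moment bound from Lemma~\ref{L.MomentSmaller_1} applied to $W_n(R_0^*)$ together with Markov's inequality, and then Slutsky's theorem. Your treatment of $E[R^\beta]<\infty$ reproduces the $\beta\le 1$ case of Lemma~\ref{L.Stability}, and your uniqueness argument (running the iteration from a hypothetical fixed point $\tilde R$) is exactly what the paper's one-line ``any other possible solution would have to converge to the same limit'' is expressing.
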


\begin{proof}
In view of \eqref{eq:connection}, and since $R^{(n)} \to R$ a.s., the result will follow from Slutsky's Theorem (see Theorem 1, p. 254 in \cite{ChowTeich1988}) once we show that $W_n(R_0^*) \Rightarrow 0$. Recall that $W_n(R_0^*)$ is the same as $W_n$ if we substitute the $Q_{i_1, \dots, i_n}$ by the $R_{0,(i_1,\dots,i_n)}^*$. Fix $\epsilon > 0$, then
\begin{align*}
P( W_n(R_0^*) > \epsilon) &\leq \epsilon^{-\beta} E[ W_n(R_0^*)^\beta] \\
&\leq \epsilon^{-\beta} (E[C^\beta] E[N])^n E[(R_0^*)^\beta] \qquad \text{(by Lemma \ref{L.MomentSmaller_1})} .
\end{align*}
Since by assumption the right hand side converges to zero as $n \to \infty$, then $R_n^* \Rightarrow R$. Furthermore, $E[R^\beta] < \infty$ by Lemma \ref{L.Stability}. Clearly, the distribution of $R$ represents the unique solution with finite $\beta$-moment to \eqref{eq:GeneralPR}, since any other possible solution would have to converge to the same limit.  
\qed\end{proof}

{\sc Remarks:} (i) Note that when $E[N] < 1$, then the branching tree is a.s. finite and no conditions on the $C$'s are necessary for $R < \infty$ a.s. This corresponds to the second condition in Theorem 1 of \cite{Brandt_86}. (ii) In view of the same theorem from \cite{Brandt_86}, one could possibly establish the convergence of $R_n^* \Rightarrow R < \infty$ under milder conditions. However, since the conditions that we will impose on $N$, $Q$ and $C$ in the main theorems will be stronger, this lemma is not restrictive. Furthermore, the initial values, $R_0^*$, are typically small (e.g. constant in applications), and thus the polynomial moment condition imposed on $R_0^*$ is general enough.

\section{The case when the $C$'s dominate: Implicit renewal theory}
\label{S.C_dominates}

In this section we study the power law phenomenon that arises from the multiplicative effects of the weights $\{C_i\}$ in \eqref{eq:GeneralPR}.  

\subsection{Implicit Renewal Theorem on Trees} \label{S.Renewal}

One observation that will help gain some intuition about \eqref{eq:MainRec} is to consider the case when $N \equiv 1$. The process $\{R^{(n)}\}$ then reduces to a (random coefficient) autoregressive process of order one, whose steady state solution satisfies
$$R \stackrel{\mathcal{D}}{=} Q + C R,$$
where $R$ is independent of $C$ and $Q$. This is precisely one of the stochastic recursions considered in \cite{Goldie_91} (see also \cite{Kesten_73}), where it is shown that under the assumption that $E[C^\alpha] = 1$ and some other technical conditions on the distribution of $C$ and $Q$, we have that
\begin{equation} \label{eq:PowerLaw}
P(R > x) \sim H x^{-\alpha}
\end{equation}
for some (computable) constant $H > 0$ (see Theorem 4.1 in \cite{Goldie_91}). The fact that the index of the power law depends on the distribution of the weights is already promising in terms of our goal of identifying other sources of power law behavior. 

Informally speaking, the recursions studied in \cite{Goldie_91} are basically multiplicative away from the boundary. However, \eqref{eq:GeneralPR} always has an additive component given by $\sum_{i=1}^N C_i R_i$ regardless of how far from the boundary one may be. Fortunately, due to the heavy-tailed nature of $R$, our intuition says that it is only one of the additive $C_iR_i$ components that determines the behavior of \eqref{eq:GeneralPR}, thus the sum will behave as the maximum term, simplifying to
\begin{equation} \label{eq:heuristic}
P\left( Q + \sum_{i=1}^N C_i R_i > x \right)  \sim E[N] P(C R > x),
\end{equation}
assuming that $Q$ has a light enough tail.  This heuristic suggests the following generalization of Theorem 2.3 from \cite{Goldie_91}. 

Here, we would like to emphasize that $R$ and $C$ in the following theorem can be any two  independent random variables that satisfy the stated conditions, i.e., they do not have to be related by recursion \eqref{eq:GeneralPR}. Hence, the theorem may be of potential use in other applications. Note that we prove the theorem for a general constant $m$, that in our application refers to $E[N]$, as suggested by \eqref{eq:heuristic}.

\begin{thm} \label{T.Goldie}
Suppose $C \geq 0$ a.s., $0 < E[C^\alpha \log C] < \infty$ for some $\alpha > 0$, and that the conditional distribution of $\log C$ given $C \neq 0$ is nonarithmetic. Suppose further that $R$ is independent of $C$, $m E[C^\alpha] = 1$, and that $E[R^\beta] < \infty$ for any $0 < \beta < \alpha$. If
\begin{equation} \label{eq:Goldie_condition}
\int_0^\infty \left| P(R > t) - m P(CR > t) \right| t^{\alpha-1} dt < \infty,
\end{equation}
then
$$P(R > t) \sim H t^{-\alpha}, \qquad t \to \infty,$$
where $H \geq 0$ is given by
$$H = \frac{1}{m E[C^\alpha \log C]} \int_{0}^\infty v^{\alpha -1} (P(R > v) - mP(C R > v)) \, dv.$$ 
\end{thm}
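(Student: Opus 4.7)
The plan is to adapt the argument of Theorem 2.3 (and 4.1) of \cite{Goldie_91} to our setting where the proportionality factor $m$ in front of $P(CR>t)$ may differ from $1$; the algebraic identity $mE[C^\alpha]=1$ ensures that the usual exponential (Esscher) tilt still produces a probability measure. Set $g(u)=e^{\alpha u}P(R>e^u)$ for $u\in\mathbb{R}$, and define the measure $F^*$ on $\mathbb{R}$ by
\[
F^*(dy) \;=\; m\,e^{\alpha y}\,P(\log C\in dy,\,C>0).
\]
Since $mE[C^\alpha]=1$, $F^*$ is a probability distribution; it is nonarithmetic by hypothesis and has positive finite mean $mE[C^\alpha\log C]$. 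Conditioning on $C$ and changing variables via $y=\log c$ gives $m e^{\alpha u}P(CR>e^u)=(g*F^*)(u)$, so $g$ satisfies the renewal equation
\[
g(u) \;=\; (g*F^*)(u)+\psi(u),\qquad \psi(u):=e^{\alpha u}\bigl[P(R>e^u)-mP(CR>e^u)\bigr].
\]
The substitution $t=e^u$ converts hypothesis \eqref{eq:Goldie_condition} into $\int_{-\infty}^\infty|\psi(u)|\,du<\infty$, and the moment assumption $E[R^\beta]<\infty$ for $\beta<\alpha$ controls $g(u)$ as $u\to-\infty$ via Markov's inequality.

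With the renewal equation in place, the natural next step is the key renewal theorem, but this requires direct Riemann integrability of the forcing term, which absolute integrability alone does not provide. I would import Goldie's smoothing device by setting
\[
\hat f(u) \;=\; \int_0^\infty f(u+y)\,\alpha e^{-\alpha y}\,dy
\]
and verifying that $\hat g=\hat g*F^*+\hat\psi$. The smoothing introduces uniform continuity while preserving absolute integrability, and Goldie's monotonicity lemmas, which apply here since $u\mapsto P(R>e^u)$ is right-continuous and decreasing, upgrade this to direct Riemann integrability of $\hat\psi$. A Fubini computation gives $\int\hat\psi\,du=\int\psi\,du$, so the key renewal theorem yields
\[
\lim_{u\to\infty}\hat g(u) \;=\; \frac{1}{mE[C^\alpha\log C]}\int_{-\infty}^\infty\psi(u)\,du,
\]
and substituting back $v=e^u$ identifies the right-hand side with the constant $H$ declared in the theorem.

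The delicate part, and the main obstacle, is the \emph{unsmoothing} step: converting $\hat g(u)\to H$ into $g(u)\to H$. Here I would exploit the explicit exponential kernel in $\hat f$ together with monotonicity of $P(R>e^u)$ via sandwich bounds of the form $e^{-\alpha\delta}\hat g(u-\delta)\le g(u)(1+o(1))\le e^{\alpha\delta}\hat g(u)$ for small $\delta>0$, then let $\delta\downarrow 0$. The reason the extension from $m=1$ to general $m$ goes through without new surprises is that $m$ enters only as the multiplicative rescaling turning $e^{\alpha y}P(\log C\in dy,C>0)$ into a probability measure; every monotonicity, Fubini, and renewal step of Goldie's original argument carries over verbatim. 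Nonnegativity of $H$ then follows a posteriori from nonnegativity of $g$.
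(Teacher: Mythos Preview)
Your overall strategy---tilt by $e^{\alpha y}$ to get a probability measure, smooth, apply a key renewal theorem, unsmooth---is the same as the paper's. But there is a genuine gap at the step where you invoke the key renewal theorem. Writing down the renewal equation $g=g*F^*+\psi$ (your $g$ is the paper's $r$, your $\psi$ is the paper's $g$) is correct, but the key renewal theorem used here (Athreya--McDonald--Ney, as cited in the paper) gives the limit of $(\hat\psi*\nu)(u)$, not of an arbitrary solution of the renewal equation. On the whole line with positive drift, the renewal equation has nonunique solutions (constants solve the homogeneous equation), so you must identify $\hat g$ with $\hat\psi*\nu$; equivalently, you must show the remainder $\hat g*F^{*n}\to 0$. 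You do not address this, and your claimed role for the moment hypothesis is off: $g(u)=e^{\alpha u}P(R>e^u)\le e^{\alpha u}\to 0$ as $u\to-\infty$ trivially, so the moment condition is not needed there.

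The paper closes exactly this gap by telescoping first: it writes $r(t)=(g*\nu_{n-1})(t)+\delta_n(t)$ with $\delta_n(t)=m^nP(\Pi_nR>e^t)$, smooths with $\breve f(t)=\int_{-\infty}^t e^{-\beta(t-u)}f(u)\,du$ for a carefully chosen $\beta<\alpha$ with $mE[C^\beta]<1$, and then bounds $\breve\delta_n(t)\le \beta^{-1}e^{-\beta t}E[R^\beta](mE[C^\beta])^n\to 0$. This is precisely where $E[R^\beta]<\infty$ for $\beta<\alpha$ enters, and it is why the smoothing exponent must be strictly less than $\alpha$---your choice of exponent $\alpha$ in the smoothing kernel would give $mE[C^\alpha]=1$ and the remainder would not vanish. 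The unsmoothing in the paper is then a clean Monotone Density argument (Lemma~\ref{L.Derivative}): from $e^{-\beta t}\int_0^{e^t}v^{\alpha+\beta-1}P(R>v)\,dv\to H/\beta$ one reads off $P(R>t)\sim Ht^{-\alpha}$, which is more direct than the sandwich you sketch.
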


The proof of this theorem follows the same steps as Theorem 2.3 from \cite{Goldie_91}, and is presented in Section \ref{SS.CDominates_Proofs}. 

{\sc Remarks:} (i) As pointed out in \cite{Goldie_91}, the statement of the theorem has content only when $R$ has infinite moment of order $\alpha$, since otherwise the constant $H = (\alpha E[N] E[C^\alpha\log C])^{-1} (E[R^\alpha] - E[N] E[(CR)^\alpha])$ will be zero by independence of $R$ and $C$. (ii) Note that some of the assumptions of Theorem \ref{T.Goldie} are different than the corresponding ones from Theorem 2.3 in \cite{Goldie_91}. In particular, it is no longer the case that convexity implies $E[C^\alpha \log C] > 0$ whenever $\alpha$ solves $m E[C^\alpha] = 1$ and $E[C^\alpha \log C] < \infty$, since if $m > 1$ it is possible to construct  counterexamples, hence the need to include this as an assumption. Another difference is our requirement that $E[R^\beta] < \infty$ for any $0 < \beta < \alpha$.  In the case of applying Theorem~\ref{T.Goldie} to equation \eqref{eq:GeneralPR}, the condition on $E[R^\beta]$ is not restrictive since we readily obtain the moments of $R$ for $0<\beta <\alpha$ from the computed moments of $W_n$ from Section \ref{S.Moments}.  (iii) A similar result for the case when $\log C$ is lattice valued can be derived using the corresponding renewal theorem.  

In what follows we will use the preceding theorem to derive the asymptotic behavior of $P(R > x)$ where $R$, as given by \eqref{eq:R_Def}, satisfies \eqref{eq:GeneralPR}. Here, the main difficulty will be to show that condition \eqref{eq:Goldie_condition} holds. For brevity we use $x \vee y$ to denote $\max\{x, y\}$ and $x \wedge y$ to denote $\min\{x, y\}$. 

\bigskip

\begin{thm} \label{T.GoldieApplication}
Suppose that $0 < E[C^\alpha \log C] < \infty$ for some $\alpha > 0$, the conditional distribution of $\log C$ given $C \neq 0$ is nonarithmetic, and that $C$ and $R$ are independent, where $R$ is defined by \eqref{eq:R_Def}. Assume that $E[N] E[C^\alpha] = 1$, $0 < E[Q^{\alpha}] < \infty$ and $E[N^{\alpha \vee (1+\epsilon)}] < \infty$ for some $0 < \epsilon < 1$; if $\alpha > 1$ assume further that $E[N] E[C] < 1$. Then,
$$P(R > t) \sim H t^{-\alpha}, \qquad t \to \infty,$$
where
\begin{align*}
H &= \frac{1}{E[N] E[C^\alpha \log C]} \int_{0}^\infty v^{\alpha -1} (P(R > v) - E[N]P(C R > v)) \, dv \\
&= \frac{E\left[ \left( \sum_{i=1}^N C_i R_i +Q \right)^\alpha - \sum_{i=1}^N (C_i R_i )^\alpha \right]}{\alpha E[N]E[C^\alpha\log C]}.
\end{align*}
\end{thm}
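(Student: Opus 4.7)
The plan is to apply Theorem \ref{T.Goldie} with $m = E[N]$ to the independent pair $(R, C)$. Two hypotheses must be verified: the moment bound $E[R^\beta] < \infty$ for all $0 < \beta < \alpha$, and the integrability condition
$$\int_0^\infty \bigl| P(R > t) - E[N] P(CR > t) \bigr| t^{\alpha - 1}\, dt < \infty.$$

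The moment bound follows from Lemma \ref{L.Stability}. The function $\beta \mapsto E[N] E[C^\beta]$ is log-convex, equals $1$ at $\beta = \alpha$, and has strictly positive derivative $E[N] E[C^\alpha \log C]$ there, so it is strictly less than $1$ on a left neighborhood of $\alpha$. Combined with the assumption $E[N]E[C] < 1$ active when $\alpha > 1$, this places us in the hypotheses of Lemma \ref{L.Stability} for $\beta$ slightly below $\alpha$ (the $\beta < 1$ branch when $\alpha \le 1$, the $\beta \ge 1$ branch otherwise), yielding $E[R^\beta] < \infty$; Jensen's inequality extends this to every $0 < \beta < \alpha$.

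The integrability condition is the substantive step. Using $R \stackrel{\mathcal{D}}{=} Q + \sum_{i=1}^N C_i R_i$ with $X_i := C_i R_i$ i.i.d.\ and independent of $N$ and $Q$, and writing $S_n = X_1 + \cdots + X_n$, decompose
$$P(R > t) - E[N] P(CR > t) = \sum_{n=1}^\infty P(N=n) \bigl[ P(Q + S_n > t) - n\, P(X_1 > t) \bigr].$$
For each $n$ I would invoke the heavy-tailed single-big-jump heuristic \eqref{eq:heuristic}: split on $A_{n,\delta} = \{\max_{i \le n} X_i > (1-\delta)t\}$, using the inclusion--exclusion bound $\bigl| P(\max_i X_i > s) - n P(X_1 > s) \bigr| \le \binom{n}{2} P(X_1 > s)^2$ to handle the big-jump contribution; on $A_{n,\delta}^c$ absorb the $Q$ term into $P(Q > \delta t/2)$ (integrable against $t^{\alpha-1}$ by $E[Q^\alpha] < \infty$) and bound $P(S_n > (1-\delta) t, \max_i X_i \le (1-\delta) t)$ by a truncation-plus-Markov estimate tuned to an exponent near $\alpha$, using the finiteness of $E[X_1^\beta] = E[C^\beta] E[R^\beta]$ for $\beta$ slightly below $\alpha$. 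The per-$n$ bound grows only polynomially in $n$ and is integrable against $t^{\alpha - 1}$; summation over $n$ is then absorbed by $E[N^{\alpha \vee (1+\epsilon)}] < \infty$.

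Given both hypotheses, Theorem \ref{T.Goldie} yields $P(R > t) \sim H t^{-\alpha}$ with $H$ in its first form. The second expression follows by writing $Y^\alpha = \alpha \int_0^\infty v^{\alpha - 1} \mathbf{1}\{Y > v\}\, dv$ for nonnegative $Y$, taking expectations of $(Q + \sum_{i=1}^N C_i R_i)^\alpha - \sum_{i=1}^N (C_i R_i)^\alpha$, applying Fubini (justified by the integrability just established, even when $E[R^\alpha]$ and $E[N] E[(CR)^\alpha]$ are individually infinite), and using Wald's identity $E\bigl[\sum_{i=1}^N \mathbf{1}\{C_i R_i > v\}\bigr] = E[N] P(CR > v)$ together with the distributional fixed point equation. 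The main obstacle is the single-big-jump estimate: securing bounds on $|P(Q + S_n > t) - n P(X_1 > t)|$ that decay in $t$ strictly faster than $t^{-\alpha}$ while depending on $n$ only polynomially, with polynomial degree dominated by the finite moment of $N$. Making the truncation constants uniform in $n$ and interlocking the choice of the Markov exponent with the available $\beta$-moments of $R$ is where the bulk of the technical work lies.
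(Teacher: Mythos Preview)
Your overall plan---reduce to Theorem \ref{T.Goldie}, verify the moment hypothesis via Lemma \ref{L.Stability}, then establish the $L^1$ condition \eqref{eq:Goldie_condition}, and finally derive the second form of $H$ by the layer-cake identity plus Fubini---is exactly the paper's. The moment argument and the Fubini rewriting of $H$ match the paper's proof essentially line for line.

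Where you diverge is in verifying \eqref{eq:Goldie_condition}. You condition on $N=n$ and attempt a per-$n$ single-big-jump estimate with truncation at level $(1-\delta)t$. The paper instead pivots through the maximum: since $R \stackrel{\mathcal{D}}{=} R^* \geq \max_i C_iR_i$ and, by the union bound, $E[N]P(CR>t) \geq P(\max_i C_iR_i > t)$, both differences $P(R>t)-P(\max_i C_iR_i>t)$ and $E[N]P(CR>t)-P(\max_i C_iR_i>t)$ are nonnegative, and by Fubini their integrals against $t^{\alpha-1}$ become $\tfrac{1}{\alpha}E[(R^*)^\alpha - (\max_i C_iR_i)^\alpha]$ and $\tfrac{1}{\alpha}E[\sum_i (C_iR_i)^\alpha - (\max_i C_iR_i)^\alpha]$. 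The second is Lemma \ref{L.Max_Approx} (this is where $E[N^{1+\epsilon}]<\infty$ enters); the first is handled by subadditivity of $x\mapsto x^\alpha$ when $\alpha\le 1$, and when $\alpha>1$ by inserting $\sum_i(C_iR_i)^\alpha$ and invoking Lemma \ref{L.Alpha_Moments} together with the elementary bound $(S+Q)^\alpha - S^\alpha \le \alpha^p Q^\alpha + \alpha^p\sum_{i=1}^{p-1} S^{\alpha-i}Q^i$. No truncation or large-deviation machinery is needed.

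Your route is viable in spirit, but the truncation level $(1-\delta)t$ is too coarse: with only $E[X_1^\beta]<\infty$ for $\beta<\alpha$ in hand, a Chernoff/Markov bound on $P(S_n>(1-\delta)t,\,\max_i X_i\le (1-\delta)t)$ will not in general decay faster than $t^{-\alpha}$; to get super-polynomial decay you would need to truncate at something like $t/\log t$ (cf.\ Lemma \ref{L.TruncBound}) and then layer a second-order-statistic argument on top, essentially rebuilding the machinery of Section \ref{SS.NDominates_Proofs}. The paper's max-pivot avoids all of this and makes the role of the moment assumption on $N$ transparent.
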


{\sc Remarks:} (i) Note that the second expression for $H$ is more suitable for actually computing it, especially in the case of $\alpha$ being an integer, as will be stated in the forthcoming corollary.  (ii) When $\alpha$ is not an integer we can derive an explicit bound on $H$ by using the forthcoming Lemma \ref{L.Max_Approx} and \eqref{eq:Alpha_diff}. (ii) For the homogeneous equation ($Q \equiv 0$) and $\alpha > 1$, closely related results to our theorem can be found in Theorem 2.2 of \cite{Liu_00} and Proposition 7 of \cite{Iksanov_04}. The approach from \cite{Liu_00} transforms the recursion $W \stackrel{\mathcal{D}}{=} \sum_{i=1}^N C_i W_i$ for the critical case $E[W] = 1$, $E\left[ \sum_{i=1}^N C_i \right] = 1$ to a first order difference (autoregressive) equation on a different probability space, see Lemma 4.1 in \cite{Liu_00}. Note that the tail behavior of $W$ does not imply that of $R$. Furthermore, it appears that the method from \cite{Liu_00} does not extend to the nonhomogeneous case   since the proof of Lemma~4.1 in \cite{Liu_00} critically depends on having both $E[W] = 1$ and $E\left[ \sum_{i=1}^N C_i \right] = 1$, which is only possible when $Q \equiv 0$. For $0 < \alpha \leq 1$, the homogeneous equation was studied in \cite{Liu_98} using stable laws. (iii) Related results for the nonhomogeneous equation with deterministic constants $Q, \{C_i\}$, $N = \infty$, have been considered in \cite{Rosler_93} (see Theorem 5), and more recently in \cite{Alsm_Rosl_05}, also using stable laws. (iv) Moreover, the results obtained in the references cited above appear to be less explicit in the expression for $H$ than the statement of Theorem \ref{T.GoldieApplication}, as Corollary \ref{C.explicit} below illustrates.  (v) Furthermore, Theorem~ \ref{T.Goldie} and the preceding technique of Theorem~\ref{T.GoldieApplication} can be adapted to analyze other, possibly non-linear, recursions on trees, e.g., one can characterize the asymptotic behavior of $P(R > x)$ that solves
$$R = Q + \max_{1 \leq i \leq N} C_i R_i.$$

We also want to point out that one can obtain the logarithmic asymptotics of $R$, that is, the behavior of $\log P(R > x)$, much easier and under less restrictive conditions, e.g. $\log C_i$ needs not be nonarithmetic (this condition is required because of the use of the Renewal Theorem).  An upper bound can be obtained from Lemma \ref{L.Stability} and Markov's inequality. For the lower bound, using the notation from Section \ref{SS.TreeConstruction}, we obtain
\begin{align*}
P(R > x) &\geq P(W_n > x) \geq P\left( \max_{1 \leq i \leq N} C_i W_{n-1,i} > x \right) \\
&= E\left[ (1 - P(C W_{n-1} \leq x)^N) \right] \\
&\geq E\left[ N P(C W_{n-1} \leq x )^N  \right] P(C W_{n-1} > x),
\end{align*}
where in the last step we used the relation $1-t^m \geq m t^m (1-t)$ for $0 \leq t \leq 1$.  Now we use the fact that $P(C W_{n-1} \leq x) \geq P(R \leq x)$, for all $x$, to show that 
\begin{align*}
P(R > x) &\geq E\left[ N P(R \leq x )^N  \right] P(C W_{n-1} > x) \\
&\geq E\left[ N P(R \leq x )^N \right] P\left( C_1 \max_{1\leq i\leq N} C_{2,i} W_{n-2,i}  > x \right) \\
&\geq E\left[ N P(R \leq x )^N  \right] E\left[ N P(C_1 C_2 W_{n-2} \leq x )^N  \right] P(C_1 C_2 W_{n-2} > x),
\end{align*}
which, by using $P(C_1 C_2 W_{n-2} \leq x) \geq P(R \leq x)$, for all $x$, yields
\begin{align*}
P(R > x) &\geq \left(E\left[ N P(R \leq x )^N  \right] \right)^2 P(C_1 C_2 W_{n-2} > x). 
\end{align*}
Next, by continuing this inductive argument we obtain
\begin{align*}
P(R > x) &\geq \left(E\left[ N P(R \leq x )^N \right] \right)^n P\left( Q \prod_{i=1}^n C_i > x \right). 
\end{align*}
Finally, for any $0 < \epsilon < 1$, we can choose $x_0$ such that 
$E\left[ N P(R \leq x_0 )^N  \right]\geq(1-\epsilon)E [N]$, implying that 
for all $n \geq 0$ and $x \geq x_0$,
$$P(R > x) \geq (1-\epsilon)^n (E[N])^n P\left( Q \prod_{i=1}^n C_i > x \right) \geq P(Q > 1/\log x) (1-\epsilon)^n (E[N])^n P\left( \prod_{i=1}^n C_i > x \log x \right) .$$
Now define $S_n = \log C_1 + \dots + \log C_n$, $\kappa(\theta) = \log E[ C^\theta]$, and choose $\alpha$ to be the solution to $\kappa(\alpha) = -\log E[N]$ (i.e. $E[N]E[C^\alpha] = 1$). The proof can be completed by choosing $n = n(x) = \log (x\log x)/\mu_\alpha$, where $\mu_\alpha = \kappa'(\alpha) = E[C^\alpha \log C]/E[C^\alpha] > \kappa'(0) = E[\log C]$ by convexity of $\kappa(\cdot)$. Then, by Theorem 2.1 in Chapter XIII in \cite{Asm2003}, 
$$\liminf_{x \to \infty} \frac{\log P(R > x)}{\log x} \geq \frac{\log((1-\epsilon) E[N])}{\mu_\alpha} + \liminf_{x \to \infty} \frac{\log P(S_{n(x)} >  \mu_\alpha n(x))}{\mu_\alpha n(x)} = \frac{\log(1-\epsilon)}{\mu_\alpha} -\alpha.$$
Hence, one can derive with a considerably smaller effort the following theorem.

\begin{thm} 
Suppose that $0 < E[C^\alpha \log C] < \infty$ for some $\alpha > 0$, and that $R$ is given by \eqref{eq:R_Def}. Assume that $E[N] E[C^\alpha] = 1$, $0 < E[Q^{\alpha}] E[N^\alpha] < \infty$; if $\alpha > 1$ assume further that $E[N] E[C] < 1$. Then,
$$\log P(R > t) \sim -\alpha \log t, \qquad t \to \infty.$$
\end{thm}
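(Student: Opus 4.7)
The plan is to establish matching upper and lower bounds $\limsup_{x\to\infty}\log P(R>x)/\log x\le-\alpha$ and $\liminf_{x\to\infty}\log P(R>x)/\log x\ge-\alpha$. The upper bound is immediate from the moment estimates: under the hypotheses of this theorem, Lemma~\ref{L.Stability} yields $E[R^\beta]<\infty$ for every $0<\beta<\alpha$ (for $\beta<\alpha$ we have $E[N]E[C^\beta]<1$ by strict convexity of $\theta\mapsto E[C^\theta]$ combined with $E[N]E[C^\alpha]=1$, and for $\alpha>1$ the extra assumption $E[N]E[C]<1$ is in force). Markov's inequality then gives $P(R>x)\le E[R^\beta]\,x^{-\beta}$; taking logs, dividing by $\log x$, and letting $\beta\uparrow\alpha$ proves the upper bound.

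For the lower bound I would exploit the tree construction. Since $R=\sum_{k\ge 0}W_k\ge W_n$ pathwise for every $n\ge 0$, we have $P(R>x)\ge P(W_n>x)$. Using $W_n\stackrel{\mathcal{D}}{=}\sum_{i=1}^NC_iW_{n-1,i}\ge\max_{1\le i\le N}C_iW_{n-1,i}$, conditioning on $N$, and applying the elementary inequality $1-t^m\ge m\,t^m(1-t)$ for $t\in[0,1]$, I obtain
$$P(W_n>x)\ge E\bigl[N\,P(CW_{n-1}\le x)^N\bigr]\,P(CW_{n-1}>x).$$
Since $P(CW_{n-1}\le x)\ge P(R\le x)$, iterating this bound $n$ times collapses the chain of weighted maxima into a single product, yielding
$$P(R>x)\ge \bigl(E[N\,P(R\le x)^N]\bigr)^n\,P\!\left(Q\prod_{i=1}^nC_i>x\right).$$
By Lemma~\ref{L.Stability}, $R<\infty$ a.s., so bounded convergence gives $E[N\,P(R\le x)^N]\to E[N]$; thus for any fixed $\epsilon\in(0,1)$ there is $x_0$ with $E[N\,P(R\le x)^N]\ge(1-\epsilon)E[N]$ for $x\ge x_0$. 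Decoupling $Q$ from the product via independence,
$$P\!\left(Q\prod_{i=1}^nC_i>x\right)\ge P(Q>1/\log x)\,P\!\left(\prod_{i=1}^nC_i>x\log x\right),$$
which is useful because $P(Q>0)>0$ (since $E[Q^\alpha]>0$), so $\log P(Q>1/\log x)$ is bounded as $x\to\infty$.

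To close the argument, set $S_n=\sum_{i=1}^n\log C_i$ and $\kappa(\theta)=\log E[C^\theta]$. The critical exponent satisfies $\kappa(\alpha)=-\log E[N]$, and the tilted mean $\mu_\alpha:=\kappa'(\alpha)=E[C^\alpha\log C]/E[C^\alpha]$ is strictly positive and finite by hypothesis. I would then choose $n=n(x):=\lceil\log(x\log x)/\mu_\alpha\rceil$, so that $\mu_\alpha n(x)\sim\log x$. Classical Cramér's theorem (Theorem~2.1, Ch.~XIII of Asmussen) gives
$$\liminf_{n\to\infty}\frac{1}{n}\log P(S_n>\mu_\alpha n)\ge -\bigl(\alpha\mu_\alpha-\kappa(\alpha)\bigr)=-\alpha\mu_\alpha-\log E[N].$$
Assembling the pieces and dividing by $\log x\sim\mu_\alpha n(x)$,
$$\liminf_{x\to\infty}\frac{\log P(R>x)}{\log x}\ge\frac{\log((1-\epsilon)E[N])-\alpha\mu_\alpha-\log E[N]}{\mu_\alpha}=-\alpha+\frac{\log(1-\epsilon)}{\mu_\alpha},$$
and sending $\epsilon\downarrow 0$ matches the upper bound.

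The main obstacle is the delicate balance of scales in the lower bound: the accumulated factor $(1-\epsilon)^n(E[N])^n$ from $n$ generations of the tree-shrinking step must cancel against the Cramér rate $-(\alpha\mu_\alpha+\log E[N])$ for the product of $C$'s, and it is this cancellation that forces the choice $n\asymp\log x/\mu_\alpha$ and reveals $E[C^\alpha\log C]$ (through $\mu_\alpha$) as the natural quantity governing the asymptotic exponent. The inflation $x\mapsto x\log x$ inside the product tail is a cheap device that decouples $Q$ at a cost of only an $o(\log x)$ error, which is tolerable because we seek logarithmic, not exact, asymptotics; in particular, no subexponential or regular variation machinery is required and the nonarithmeticity hypothesis of Theorem~\ref{T.GoldieApplication} is not needed.
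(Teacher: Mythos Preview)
Your proposal is correct and follows essentially the same route as the paper: the upper bound via Lemma~\ref{L.Stability} and Markov's inequality, and the lower bound by bounding $P(R>x)\ge P(W_n>x)$, peeling off one generation at a time with the inequality $1-t^m\ge mt^m(1-t)$, replacing $P(CW_{n-1}\le x)$ by $P(R\le x)$ at each step, decoupling $Q$ via $P(Q>1/\log x)$, and then optimizing $n\approx\log(x\log x)/\mu_\alpha$ using the Cram\'er large-deviation rate for $S_n=\sum\log C_i$ (the paper cites the same Theorem~2.1, Ch.~XIII of Asmussen). Your closing remarks on why nonarithmeticity is unnecessary here are also in line with the paper's own commentary.
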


Therefore, the majority of the work in proving Theorem \ref{T.GoldieApplication}  goes into the derivation of the exact asymptotic. Furthermore, it is worth noting that the logarithmic approach, although less precise, can be obtained in a more general setting. For example, one can have $C_\cdot^{(\cdot)}$ to be dependent across different generations, as in the so called WBP in a random environment.  Here, one could derive the asymptotics of $\log P(R > x)$ if $E\left[ \left( \prod_{i=1}^n C_{(1,1,\dots,1)}^{(n)} \right)^\alpha \right]$ satisfies the polynomial type G\"{a}rtner-Ellis conditions that were recently considered in \cite{Jel_Tan_07}.

\begin{cor} \label{C.explicit}
For integer $\alpha \geq 1$, and under the same assumptions of Theorem \ref{T.GoldieApplication}, the constant $H$ can be explicitly computed as a function of $E[R^k], E[C^k], E[Q^k]$, $0 \leq k \leq \alpha-1$. In particular, for $\alpha = 1$,
$$H = \frac{E[Q]}{E[N] E[C \log C]},$$
and for $\alpha = 2$,
$$H = \frac{E[Q^2] + 2 E[Q] E[C] E[N] E[R] +  E[N(N-1)] (E[C] E[R])^2 }{2 E[N] E[C^2 \log C]}, \qquad E[R] = \frac{E[Q]}{1-E[N]E[C]}.$$
\end{cor}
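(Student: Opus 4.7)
The plan is to use the second expression for $H$ from Theorem \ref{T.GoldieApplication}, namely
$$H = \frac{E\left[ \left( Q+\sum_{i=1}^N C_i R_i \right)^\alpha - \sum_{i=1}^N (C_i R_i)^\alpha \right]}{\alpha E[N]\,E[C^\alpha \log C]},$$
since the denominator is already expressed in terms of known quantities. When $\alpha$ is a positive integer I would expand the $\alpha$th power in the numerator via the multinomial theorem, yielding a sum over tuples $(n_0, n_1, \dots, n_N)$ with $n_0+n_1+\dots+n_N = \alpha$ of terms of the form $\binom{\alpha}{n_0,\dots,n_N} Q^{n_0}\prod_{i=1}^N (C_iR_i)^{n_i}$.

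The key structural observation is that the tuples with a single $n_j = \alpha$ (and the remaining $n_i = 0$) produce precisely the terms $(C_jR_j)^\alpha$ that are subtracted off in the numerator, so these cancel. All surviving tuples therefore have every $n_i$ strictly less than $\alpha$. Conditioning on $N$ and exploiting the independence of $Q$, $\{C_i\}$, and $\{R_i\}$, the expectation of each surviving monomial factorizes as a product of $E[Q^{n_0}]$, factorial-type moments of $N$, and $E[C^{n_i}]\,E[R^{n_i}]$ with each $n_i \leq \alpha-1$. Consequently the numerator is a polynomial in $E[Q^k]$, $E[C^k]$, and $E[R^k]$ for $0 \leq k \leq \alpha-1$, together with the relevant factorial moments of $N$ which are finite by the hypothesis $E[N^{\alpha \vee (1+\epsilon)}]<\infty$.

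It remains to show that the moments $E[R^k]$ for $k<\alpha$ are themselves expressible in terms of the data. For this I would take the $k$th power in the fixed point equation $R \stackrel{\mathcal{D}}{=} Q + \sum_{i=1}^N C_i R_i$, expand again by the multinomial theorem, and take expectations. The coefficient of $E[R^k]$ that appears after expansion is $E[N]E[C^k]$, and for $1 \leq k < \alpha$ the strict log-convexity of $\beta \mapsto E[N]E[C^\beta]$, together with the boundary data $E[N]E[C] < 1$ (when $\alpha > 1$) and $E[N]E[C^\alpha] = 1$, gives $E[N]E[C^k] \neq 1$; hence $E[R^k]$ can be solved for in terms of $E[R^j]$ with $j<k$, yielding a recursion that terminates with $E[R^0]=1$. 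Finiteness of all these moments is guaranteed by Lemma \ref{L.Stability}.

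The explicit formulas for $\alpha=1$ and $\alpha=2$ would then be obtained by simply carrying out the expansion. For $\alpha=1$ the numerator collapses to $E[Q]$, giving $H = E[Q]/(E[N]E[C\log C])$. For $\alpha=2$, writing $\left(Q+\sum_i C_iR_i\right)^2 = Q^2 + 2Q\sum_i C_iR_i + \sum_i (C_iR_i)^2 + \sum_{i\neq j} C_iC_jR_iR_j$, subtracting $\sum_i(C_iR_i)^2$, and using $E[\sum_{i\neq j}\cdot] = E[N(N-1)]\,E[C]^2\,E[R]^2$ along with $E[R] = E[Q]/(1-E[N]E[C])$ from the first-order step, produces the stated expression. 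The main obstacle is not conceptual but bookkeeping, namely tracking the factorial moments of $N$ that multiply each multinomial coefficient; this is mechanical but tedious for larger $\alpha$, which is why only $\alpha=1,2$ are written out explicitly in the statement.
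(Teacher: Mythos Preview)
Your proposal is correct and follows essentially the same approach as the paper, which states only that ``the proof follows directly from multinomial expansions of the second expression for $H$ in Theorem~\ref{T.GoldieApplication}.'' Your additional observation that the lower moments $E[R^k]$, $k<\alpha$, can themselves be solved recursively from the fixed-point equation (using $E[N]E[C^k]<1$ for $k<\alpha$) goes slightly beyond what the paper spells out, but is exactly in the spirit of the displayed formula $E[R]=E[Q]/(1-E[N]E[C])$ for the $\alpha=2$ case.
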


\begin{proof}
The proof follows directly from multinomial expansions of the second expression for $H$ in Theorem~\ref{T.GoldieApplication}. 
\qed\end{proof}

Before giving the proof of Theorem \ref{T.GoldieApplication} we state the following three preliminary lemmas. The proof of Lemma~\ref{L.Alpha_Moments} is given in Section \ref{SS.Moments_Proofs} and the proof of Lemma \ref{L.Max_Approx} is given in Section \ref{SS.CDominates_Proofs}. 

\bigskip

\begin{lem} \label{L.Moments_R}
Suppose that $0 < E[C^\alpha \log C] < \infty$ for some $\alpha > 0$ and $E[N]E[C^\alpha] = 1$; if $\alpha > 1$ suppose further that $E[N] E[C] < 1$. Assume also that $E[Q^\alpha] < \infty$, $E[N^{\alpha \vee 1}] < \infty$. Then,
$$E[R^\beta] < \infty$$
for all $0 < \beta < \alpha$. 
\end{lem}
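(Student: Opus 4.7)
The plan is to reduce the claim to Lemma \ref{L.Stability} by finding, for each $\beta\in(0,\alpha)$, a slightly larger exponent $\beta_0\in(\beta,\alpha)$ such that the hypotheses of Lemma \ref{L.Stability} all hold at $\beta_0$. Once $E[R^{\beta_0}]<\infty$ is established, the bound $x^\beta\le 1+x^{\beta_0}$ for $x\ge 0$ immediately yields $E[R^\beta]<\infty$.

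The central observation is that $\varphi(\beta)\triangleq E[N]E[C^\beta]$ is convex on the set where it is finite, because $\beta\mapsto E[C^\beta]$ has second derivative $E[C^\beta(\log C)^2]\ge 0$ (the interchange of $\partial/\partial\beta$ and expectation is justified by the assumption $E[C^\alpha\log C]<\infty$, which after the standard bounds $|\log C|^k\le C^{-\epsilon}+C^\epsilon$ gives finiteness of $E[C^\beta(\log C)^k]$ in a neighborhood of $\alpha$). By hypothesis $\varphi(\alpha)=1$ and $\varphi'(\alpha)=E[N]E[C^\alpha\log C]>0$, so $\varphi$ is strictly increasing on some left-neighborhood $(\alpha-\delta,\alpha]$ of $\alpha$, which implies $\varphi(\beta_0)<1$ for every $\beta_0\in(\alpha-\delta,\alpha)$.

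Now fix an arbitrary $\beta\in(0,\alpha)$. Choose $\beta_0\in(\alpha-\delta,\alpha)$ with $\beta_0>\beta$, and, in the case $\alpha>1$, shrink $\delta$ if necessary so that $\beta_0\ge 1$ as well (possible since $\alpha>1$). Then:
\begin{itemize}
\item $E[Q^{\beta_0}]<\infty$ because $\beta_0<\alpha$ and $E[Q^\alpha]<\infty$;
\item $E[N^{\beta_0}]<\infty$ because $\beta_0<\alpha\vee 1$ and $E[N^{\alpha\vee 1}]<\infty$;
\item If $\alpha\le 1$, then $\beta_0<1$ and $E[N]E[C^{\beta_0}]=\varphi(\beta_0)<1$, so the first branch of Lemma \ref{L.Stability} applies;
\item If $\alpha>1$, then $\beta_0\ge 1$, $E[N]E[C^{\beta_0}]=\varphi(\beta_0)<1$, and $E[N]E[C]<1$ by assumption, so $E[N]\max\{E[C],E[C^{\beta_0}]\}<1$ and the second branch of Lemma \ref{L.Stability} applies.
\end{itemize}
Either way, Lemma \ref{L.Stability} delivers $E[R^{\beta_0}]<\infty$, and therefore $E[R^\beta]\le 1+E[R^{\beta_0}]<\infty$.

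The only delicate point is the strict-monotonicity step for $\varphi$ near $\alpha$; everything else is a verification of hypotheses and a monotone comparison of moments. The reason this is almost free is that the hypothesis $E[C^\alpha\log C]>0$ (which is imposed precisely to ensure the applicability of Goldie's implicit renewal theorem in Theorem \ref{T.Goldie}) also guarantees $\varphi'(\alpha)>0$, making the left-neighborhood $(\alpha-\delta,\alpha)$ available for our choice of $\beta_0$.
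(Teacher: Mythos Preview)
Your proof is correct and follows the same route as the paper's own argument: use the derivative condition $E[C^\alpha\log C]>0$ together with $E[N]E[C^\alpha]=1$ to conclude $E[N]E[C^{\beta_0}]<1$ for $\beta_0$ just below $\alpha$, then invoke Lemma~\ref{L.Stability}. One minor simplification: the last inequality $E[R^\beta]\le 1+E[R^{\beta_0}]$ is unnecessary, since Lemma~\ref{L.Stability} already asserts $E[R^\gamma]<\infty$ for all $0<\gamma\le\beta_0$.
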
 

\begin{proof}
The derivative condition $0 < E[C^\alpha \log C] < \infty$ and $E[N] E[C^\alpha] = 1$ imply that $E[N] E[C^\beta] < 1$ for all $\beta < \alpha$ that are sufficiently close to $\alpha$. Hence, the conclusion of the result follows from Lemma \ref{L.Stability}. 
\qed\end{proof}

\bigskip

\begin{lem} \label{L.Alpha_Moments}
Let $\beta > 1$ and $p = \lceil \beta \rceil \in \{2, 3, 4, \dots\}$. For any sequence of nonnegative iid random variables $\{Y, Y_i\}_{i\geq 1}$ and any $k \in \{1,2,3,\dots\}$ we have
$$E\left[ \left( \sum_{i=1}^k Y_i \right)^\beta - \sum_{i=1}^k Y_i^\beta \right] \leq k^\beta E[ Y^{p-1} ]^{\beta/(p-1)}.$$
\end{lem}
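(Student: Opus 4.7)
My plan is to split the proof into two cases, $\beta$ integer and $\beta$ non-integer, reducing both to moments of $Y$ of order at most $p-1$.

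\emph{Integer case $\beta = p$.} Expand $(\sum_{i=1}^k Y_i)^p$ by the multinomial theorem and subtract the diagonal terms $Y_i^p$, which isolates precisely the cross multi-indices $\alpha$ with $|\alpha|=p$ and $\max_j \alpha_j \leq p-1$. For each such $\alpha$, by independence and Lyapunov's inequality applied to each factor, $\prod_j E[Y^{\alpha_j}] \leq E[Y^{p-1}]^{p/(p-1)}$. Summing the multinomial coefficients over the restricted set gives $k^p - k \leq k^p = k^\beta$, which closes this case.

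\emph{Non-integer case $\beta \in (p-1,p)$.} I would use the factorization $S^\beta = S^{p-1}\cdot S^{\beta-p+1}$ (where $\beta-p+1 \in (0,1)$) together with the sub-additivity $S^{\beta-p+1} \leq \sum_i Y_i^{\beta-p+1}$. Subtracting $\sum_i Y_i^\beta = \sum_i Y_i^{p-1} Y_i^{\beta-p+1}$ then cancels the diagonal contributions, leaving
\[
S^\beta - \sum_{i=1}^k Y_i^\beta \;\leq\; \sum_{i=1}^k Y_i^{\beta-p+1}\bigl(S^{p-1} - Y_i^{p-1}\bigr).
\]
Since $p-1$ is a positive integer, the binomial expansion of $S^{p-1} - Y_i^{p-1}$ (writing $S = Y_i + T_i$ with $T_i := S - Y_i$ independent of $Y_i$) produces only terms of the form $Y_i^{m+\beta-p+1} T_i^{p-1-m}$, all with exponents $\le p-1$. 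Every factor $E[Y^\gamma]$ is then controlled by $E[Y^{p-1}]^{\gamma/(p-1)}$ via Lyapunov, while the moments of $T_i$ are handled by Minkowski in $L^{p-1-m}$.

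The main obstacle will be making the prefactor scale as $k^\beta$ rather than the weaker $k^p$ that the straightforward bookkeeping yields. The slack originates in the pointwise bound $S^{\beta-p+1} \leq \sum_i Y_i^{\beta-p+1}$, which is loose by a factor of order $k^{p-\beta}$ when the $Y_i$ are comparable. Closing this gap requires a sharper interpolation between the integer exponents $p-1$ and $p$: one natural route is to combine Minkowski's inequality in $L^{p-1}$ (giving $E[S^{p-1}]^{1/(p-1)} \leq k E[Y^{p-1}]^{1/(p-1)}$) with Jensen applied to the concave function $x \mapsto x^{\beta-p+1}$, so that the missing $k^{\beta-p+1}$ factor is recovered in expectation rather than pointwise; an alternative is to establish monotonicity of $\gamma \mapsto (E[S^\gamma - \sum_i Y_i^\gamma])^{1/\gamma}$ on $[1,p]$, which would allow interpolation from the integer case $\gamma=p$ directly to $\gamma=\beta$. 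I would first attempt the Jensen/Minkowski route, since it avoids establishing an auxiliary monotonicity claim and fits the iid independence structure that the rest of the argument already exploits.
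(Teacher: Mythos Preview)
Your integer case is essentially the paper's argument and is fine. The non-integer case, however, has a real gap that your proposed fixes do not close.

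The additive decomposition $S^\beta = S^{p-1}\cdot S^{\beta-p+1}$ followed by the pointwise bound $S^{\beta-p+1}\le \sum_i Y_i^{\beta-p+1}$ irrevocably costs you a factor $k^{p-\beta}$: carrying out your binomial expansion and Lyapunov/Minkowski estimates exactly as you describe yields
\[
E\Bigl[S^\beta-\sum_i Y_i^\beta\Bigr]\le (k^p-k)\,E[Y^{p-1}]^{\beta/(p-1)},
\]
which is the $k^p$ bound you flagged. Your ``Jensen on $x\mapsto x^{\beta-p+1}$'' suggestion cannot repair this, because the subadditivity step is what produced the cancellation of the diagonal terms $\sum_i Y_i^\beta$ in the first place; if you replace it by an averaged Jensen estimate on $S^{\beta-p+1}$ alone you lose that cancellation, and if you keep it you have already paid the extra factor of $k$. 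The monotonicity-in-$\gamma$ alternative is not obviously true and would itself need proof.

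The paper's device is to use a \emph{multiplicative} decomposition of the exponent instead: set $\gamma=\beta/p\in(0,1]$ and write $S^\beta=(S^p)^\gamma$. Expanding $S^p$ multinomially as $\sum_i Y_i^p + X$, where $X$ collects all cross terms (multi-indices with every coordinate $\le p-1$), the subadditivity of $t\mapsto t^\gamma$ gives $S^\beta\le \sum_i Y_i^\beta + X^\gamma$. Now Jensen's inequality on the concave map $x\mapsto x^\gamma$ yields $E[X^\gamma]\le (E[X])^\gamma$, and your own Lyapunov argument on each cross term gives $E[X]\le (k^p-k)\,\|Y\|_{p-1}^p$. Hence
\[
E\Bigl[S^\beta-\sum_i Y_i^\beta\Bigr]\le (k^p-k)^{\gamma}\,\|Y\|_{p-1}^{p\gamma}\le k^{p\gamma}\,E[Y^{p-1}]^{\beta/(p-1)}=k^\beta\,E[Y^{p-1}]^{\beta/(p-1)}.
\]
The point is that applying Jensen \emph{after} aggregating the cross terms converts the combinatorial count $k^p-k$ directly into $(k^p-k)^{\beta/p}\le k^\beta$, which is exactly the sharpening your route cannot reach.
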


\bigskip

\begin{lem} \label{L.Max_Approx}
Suppose $\{C, C_i\}$ and $\{R, R_i\}$ are iid sequences of nonnegative random variables independent of each other and of $N$. Assume that $E[C^\alpha] < \infty$, $E[N^{1 + \epsilon}] < \infty$ for some $0 < \epsilon < 1$, and $E[R^\beta]< \infty$ for any $0 < \beta < \alpha$. Then, 
$$0 \leq \int_{0}^\infty \left( E[N] P(C R > t) - P\left( \max_{1\leq i \leq N} C_i R_i > t \right) \right)  t^{\alpha -1} \, dt = \frac{1}{\alpha} E \left[  \sum_{i=1}^N  \left(C_i R_i \right)^\alpha - \left( \max_{1\leq i \leq N} C_i R_i \right)^\alpha   \right] 
 < \infty.$$
\end{lem}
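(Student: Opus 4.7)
The plan is to prove the equality by Tonelli and then establish finiteness via a sharper-than-Markov bound on $kq - 1 + (1-q)^k$.

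For the equality, conditioning on $N$ and using independence gives $E[N]\, P(CR > t) = E\left[\sum_{i=1}^N 1_{\{C_iR_i > t\}}\right]$, and the pointwise inequality $\sum_{i \leq N} 1_{\{C_iR_i > t\}} \geq 1_{\{\max_{i \leq N} C_iR_i > t\}}$ shows the integrand is non-negative. Tonelli then exchanges the expectation with the integral, and $\int_0^Y t^{\alpha-1} dt = Y^\alpha/\alpha$ produces the claimed expectation. Non-negativity of that expectation also follows directly from $(\max_i y_i)^\alpha \leq \sum_i y_i^\alpha$ for $y_i \geq 0$.

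For finiteness, set $G(t) = P(CR > t)$, so the integrand equals $E[NG(t) - 1 + (1-G(t))^N]$. The key analytic ingredient is the elementary bound
\[
0 \leq kq - 1 + (1-q)^k \leq \min\!\left(kq,\; \frac{k(k-1)}{2}\, q^2\right),
\]
valid for every integer $k \geq 1$ and $q \in [0,1]$, which I would verify by noting that the map $q \mapsto kq - 1 + (1-q)^k$ vanishes together with its first derivative at $q=0$ and has second derivative bounded by $k(k-1)$ (Taylor with integral remainder). Applying the interpolation $\min(a,b) \leq a^{1-\epsilon}b^{\epsilon}$ then gives
\[
E[NG(t) - 1 + (1-G(t))^N] \leq 2^{-\epsilon}\, E[N^{1+\epsilon}]\, G(t)^{1+\epsilon},
\]
which invokes the full strength of the moment hypothesis on $N$.

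It then remains to integrate $G(t)^{1+\epsilon} t^{\alpha-1}$. I would choose $\beta \in \big(\alpha/(1+\epsilon),\, \alpha\big)$, which is available since $\epsilon > 0$, and observe that $E[(CR)^\beta] = E[C^\beta]\, E[R^\beta] < \infty$, because $E[C^\beta] \leq 1 + E[C^\alpha]$ and $E[R^\beta] < \infty$ by hypothesis. Markov's inequality gives $G(t) \leq K t^{-\beta}$ for $t \geq 1$, so $\int_1^\infty G(t)^{1+\epsilon} t^{\alpha-1}\, dt \leq K \int_1^\infty t^{\alpha - 1 - \beta(1+\epsilon)}\, dt < \infty$ by the choice of $\beta$, while on $[0,1]$ the bound $G \leq 1$ contributes at most $1/\alpha$. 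The main obstacle is precisely the quadratic refinement on $kq - 1 + (1-q)^k$: the naive bound $kq$ would leave us integrating $G(t)\, t^{\alpha-1}$, i.e., $E[(CR)^\alpha]/\alpha$, which need not be finite, and it is the quadratic improvement that lets us trade that potentially infinite $\alpha$-moment of $CR$ for a finite $\beta$-moment.
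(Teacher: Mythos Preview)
Your proof is correct and takes a genuinely different route from the paper's. Both arguments rewrite the integrand as $E[N\overline{F}(t) - 1 + (1-\overline{F}(t))^N]$ and both ultimately exploit that this quantity is quadratic in $\overline{F}(t)$ for small $\overline{F}$, but the mechanisms differ. The paper passes through the exponential bound $(1-q)^k \leq e^{-kq}$, uses monotonicity of $x \mapsto e^{-x}-1+x$ to replace $\overline{F}(t)$ by a Markov bound $ct^{-\beta}$, then performs the change of variables $u = cNt^{-\beta}$ to reduce to showing $\int_0^\infty (e^{-u}-1+u)\,u^{-\alpha/\beta-1}\,du < \infty$, which finally needs $e^{-u}-1+u \leq u^2/2$ near zero and $1 < \alpha/\beta < 1+\epsilon$. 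Your approach is more direct: you bound $kq - 1 + (1-q)^k$ by $\min\big(kq,\tbinom{k}{2}q^2\big)$ via a second-order Taylor estimate and then interpolate $\min(a,b) \leq a^{1-\epsilon}b^\epsilon$ to obtain a clean pointwise bound $2^{-\epsilon}E[N^{1+\epsilon}]\,\overline{F}(t)^{1+\epsilon}$, after which a single Markov bound with $\beta \in (\alpha/(1+\epsilon),\alpha)$ finishes. Your route avoids the exponential detour and the change of variables, and makes explicit where $E[N^{1+\epsilon}]$ enters; the paper's route has the advantage of isolating a distribution-free integral. Both require exactly the same choice of $\beta$ relative to $\alpha$ and $\epsilon$.
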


\bigskip

\begin{proof}[Proof of Theorem \ref{T.GoldieApplication}]
By Lemma \ref{L.Moments_R} we know that $E[R^\beta] < \infty$ for any $0 < \beta < \alpha$. The statement of the theorem with the first expression for $H$ will follow from Theorem \ref{T.Goldie} once we prove condition \eqref{eq:Goldie_condition} for $m = E[N]$.  Define
$$R^* = \sum_{i=1}^N C_i R_i + Q.$$
Then, 
\begin{align*}
\left| P(R>t) - E[N] P(CR > t) \right|  &\leq \left| P(R > t) - P\left( \max_{1\leq i \leq N} C_i R_i > t \right) \right|   \\
&\hspace{5mm} + \left| P\left( \max_{1\leq i \leq N} C_i R_i > t \right) - E[N] P(CR > t)  \right|.
\end{align*}
Since $R \stackrel{\mathcal{D}}{=} R^* \geq \max_{1\leq i\leq N} C_i R_i$, the first absolute value disappears. For the second one note that by the union bound
\begin{align*}
E[N] P(CR > t) - P\left( \max_{1\leq i \leq N} C_i R_i > t \right) &= E\left[ N P(CR > t) - 1 + P(CR \leq t)^N \right] \geq 0.
\end{align*}
It follows that
\begin{align}
 \left| P(R > t) - E[N] P(CR > t) \right|  &\leq P(R > t) - P\left( \max_{1\leq i \leq N} C_i R_i > t \right) \notag \\
&\hspace{5mm} + E[N] P(CR > t) - P\left( \max_{1\leq i \leq N} C_i R_i > t \right)  . \notag 
\end{align}
Note that we only need to verify that
$$\int_0^\infty  \left( P(R > t) - P\left( \max_{1\leq i \leq N} C_i R_i > t \right) \right) t^{\alpha-1} \, dt  < \infty,$$
since 
$$\int_0^\infty \left( E[N]P(CR > t) - P\left( \max_{1\leq i \leq N} C_i R_i > t \right) \right) t^{\alpha-1} dt < \infty$$
by Lemma~\ref{L.Max_Approx}.  To see this note that $R \stackrel{\mathcal{D}}{=} R^*$ and $1_{(R^* > t)} - 1_{(\max_{1\leq i\leq N} C_i R_i > t)} \geq 0$, thus, by Fubini's Theorem, we have
\begin{equation} \label{eq:Fubini}
\int_0^\infty  \left( P(R > t) - P\left( \max_{1\leq i \leq N} C_i R_i > t \right) \right) t^{\alpha-1} \, dt = \frac{1}{\alpha} E \left[ (R^*)^\alpha - \left( \max_{1\leq i \leq N} C_i R_i \right)^\alpha   \right].
\end{equation}
If $0 < \alpha \leq 1$ we apply the inequality $\left( \sum_{i=1}^k x_i \right)^\beta \leq \sum_{i=1}^k x_i^\beta$ for $0 < \beta \leq 1$, $x_i \geq 0$, to obtain
$$E \left[ (R^*)^\alpha - \left( \max_{1\leq i \leq N} C_i R_i \right)^\alpha   \right] \leq E \left[ Q^\alpha + \sum_{i=1}^N (C_iR_i)^\alpha - \left( \max_{1\leq i \leq N} C_i R_i \right)^\alpha   \right],$$ 
which is finite by Lemma \ref{L.Max_Approx} and the assumption $E[Q^\alpha] < \infty$. If $\alpha > 1$ we use the well known inequality $\left(\sum_{i=1}^k x_i \right)^\alpha \geq \sum_{i=1}^k x_i^\alpha$, $x_i\geq 0$ (see Exercise 4.2.1, p. 102, in \cite{ChowTeich1988}) to split the expectation as follows
\begin{align*}
E \left[ (R^*)^\alpha - \left( \max_{1\leq i \leq N} C_i R_i \right)^\alpha   \right]  &=   E \left[ (R^*)^\alpha - \sum_{i=1}^N  \left(C_i R_i \right)^\alpha   \right]  +   E \left[  \sum_{i=1}^N  \left(C_i R_i \right)^\alpha - \left( \max_{1\leq i \leq N} C_i R_i \right)^\alpha   \right],
\end{align*}
which can be done since both expressions inside the expectations on the right hand side are nonnegative. The second expectation is again finite by Lemma \ref{L.Max_Approx}. To see that the first expectation is finite let $S = \sum_{i=1}^N  C_i R_i$ and note that $R^* = S + Q$, where $S$ and $Q$ are independent. Let $p = \lceil \alpha \rceil$ and note that $1 \leq p-1 < \alpha$. Then, by Lemma \ref{L.Alpha_Moments}, 
\begin{align*}
E \left[ (R^*)^\alpha - \sum_{i=1}^N  \left(C_i R_i \right)^\alpha   \right] &= E\left[ (S+Q)^\alpha - S^\alpha \right] + E\left[ \left( \sum_{i=1}^N C_i R_i \right)^\alpha - \sum_{i=1}^N (C_iR_i)^\alpha \right] \\
&\leq E\left[ (S+Q)^\alpha - S^\alpha \right]  + E\left[  N^\alpha \right] (E[(CR)^{p-1}])^{\alpha/(p-1)}  .
\end{align*}
The second expectation is finite since by Lemma \ref{L.Stability}, $E[R^\beta] < \infty$ for any $0 <\beta < \alpha$. For the first expectation we use the inequality 
$$(x+t)^\kappa \leq \begin{cases}
x^\kappa + t^\kappa, & 0 < \kappa \leq 1, \\
x^\kappa + \kappa (x+t)^{\kappa-1} t, & \kappa > 1,
\end{cases}$$
for any $x,t \geq 0$. We apply the second expression $p-1$ times and then the first one to obtain 
$$(x+t)^\alpha \leq x^\alpha + \alpha (x+t)^{\alpha-1} t  \leq \dots \leq x^\alpha + \sum_{i=1}^{p-2} \alpha^i  x^{\alpha-i} t^i +  \alpha^{p-1} (x+t)^{\alpha-p+1} t^{p-1}  \leq x^\alpha + \alpha^p t^\alpha + \alpha^p \sum_{i=1}^{p-1} x^{\alpha-i} t^i.$$
We conclude that
\begin{equation} \label{eq:Alpha_diff}
E\left[(S+Q)^\alpha - S^\alpha\right] \leq \alpha^p E[Q^\alpha] + \alpha^p \sum_{i=1}^{p-1} E[S^{\alpha-i}] E[Q^i], 
\end{equation}
where $E[S^{\alpha-i}] \leq E[(R^*)^{\alpha-i}] < \infty$ for any $1\leq i\leq p-1$ by Lemma \ref{L.Stability}. 

Finally, applying Theorem \ref{T.Goldie} gives 
$$P(R > t) \sim H t^{-\alpha},$$
where $H = (E[N] E[C^\alpha \log C])^{-1} \int_0^\infty v^{\alpha-1} (P(R > v) - E[N] P(CR > v)) \, dv$. 

To obtain the second expression for $H$ note that
\begin{align}
&\int_0^\infty v^{\alpha-1}  (P(R > v) - E[N] P(CR > v)) \, dv \notag \\
&= \int_0^\infty v^{\alpha-1}  \left( E\left[1_{(\sum_{i=1}^N C_i R_i + Q > v)}\right]  - E\left[ \sum_{i=1}^N 1_{(C_iR_i > v)}  \right] \right) \, dv \notag \\
&= E \left[   \int_0^\infty v^{\alpha-1}  \left(  1_{(\sum_{i=1}^N C_i R_i + Q > v)}  -  \sum_{i=1}^N 1_{(C_iR_i > v)} \right) dv  \right] \label{eq:Fubini} \\
&= E \left[   \int_0^{\sum_{i=1}^N C_i R_i + Q} v^{\alpha-1} dv  -  \sum_{i=1}^N \int_0^{C_i R_i} v^{\alpha-1}  dv  \right] \label{eq:DiffIntegrals} \\
&= \frac{1}{\alpha} E\left[ \left( \sum_{i=1}^N C_i R_i + Q \right)^\alpha - \sum_{i=1}^N (C_i R_i)^\alpha   \right] , \notag
\end{align}
where \eqref{eq:Fubini} is justified by Fubini's Theorem and the absolute integrability of $v^{\alpha-1} (P(R > v) - E[N] P(CR > v))$, and \eqref{eq:DiffIntegrals} is justified from the observation that 
$$v^{\alpha-1} 1_{(\sum_{i=1}^N C_i R_i + Q > v)} \qquad \text{and} \qquad v^{\alpha-1} \sum_{i=1}^N 1_{(C_iR_i > v)}$$
are each almost surely absolutely integrable as well. This completes the proof. 
\qed\end{proof}

\section{The case when $N$ dominates} \label{S.NDominates}

We now turn our attention to the distributional properties of $R^{(n)}$ and $R$ when $N$ has a heavy-tailed distribution (in particular, regularly varying) that is heavier than the potential power law effect arising from the multiplicative weights $\{C_i\}$. This case is particularly important for understanding the behavior of Google's PageRank algorithm since the $C_i$'s are smaller than one and the in-degree distribution of the Web graph is well accepted to be a power law.  We start this section by stating the corresponding lemma that describes the asymptotic behavior of $R^{(n)}$.  The main technical difficulty of extending this lemma to steady state ($R = R^{(\infty)}$) is to develop a uniform bound for $R-R^{(n)}$, which is enabled by our main technical result of this section, Proposition \ref{P.UniformBound}.  The following lemmas are proved in Section \ref{SS.NDominates_Proofs}. 


Before stating the lemmas, let us recall that a function $L: [0, \infty) \to (0, \infty)$ is slowly varying if $L(\lambda x)/L(x) \to 1$ as $x \to \infty$ for any $\lambda > 0$. We then say that the function $x^{-\alpha} L(x)$ is regularly varying with index $\alpha$.

\begin{lem} \label{L.Finite_n}
Suppose that $P(N > x) = x^{-\alpha} L(x)$ with $L(\cdot)$ slowly varying, $\alpha > 1$, and $E[Q^{\alpha+\epsilon}] < \infty$, $E[C^{\alpha+\epsilon}] < \infty$ for some $\epsilon > 0$. Let $\rho = E[N] E[C]$ and $\rho_\alpha = E[N] E[C^\alpha]$. Then, for any fixed $n \in \{1, 2, 3,\dots\}$,
\begin{equation} \label{eq:Asymp_R_n}
P(R^{(n)} > x) \sim  \frac{(E[C]E[Q])^\alpha}{(1-\rho)^\alpha} \sum_{k=0}^n \rho_\alpha^k (1-\rho^{n-k})^\alpha P(N > x)
\end{equation}
as $x \to \infty$, where $R^{(n)}$ was defined in Section \ref{SS.TreeConstruction}.
\end{lem}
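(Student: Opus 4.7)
The approach is to induct on $n$ using the recursion
$$R^{(n)} \stackrel{\mathcal{D}}{=} Q^{(0)} + \sum_{j=1}^{N^{(0)}} C_j^{(1)} R_j^{(n-1)}$$
from \eqref{eq:MainRec}, in which the $R_j^{(n-1)}$ are iid copies of $R^{(n-1)}$ and independent of $N^{(0)},\, Q^{(0)},\, \{C_j^{(1)}\}$. The key engine is the asymptotic for a random sum whose count $N$ is regularly varying of index $\alpha$ and whose iid summands share the same tail index: both the ``many typical summands'' and the ``one extreme summand'' mechanisms will contribute at order $P(N>x)$ and must be tracked simultaneously.

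\textbf{Base case ($n=1$).} The recursion gives $R^{(1)} \stackrel{\mathcal{D}}{=} Q^{(0)} + \sum_{j=1}^N C_j Q_j^{(1)}$. The iid summands $C_j Q_j^{(1)}$ have mean $E[C]E[Q]$ and a finite $(\alpha+\varepsilon')$-moment (for some $0<\varepsilon'<\varepsilon$, by H\"{o}lder and independence of $C,Q$), while $Q^{(0)}$ has a strictly lighter tail than $N$ under the moment hypotheses. The standard random-sum asymptotic for heavy-tailed counts with light-tailed summands then yields $P(R^{(1)}>x) \sim (E[C]E[Q])^\alpha P(N>x)$, matching the claim.

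\textbf{Inductive step.} Assume the formula at level $n-1$; then $R^{(n-1)}$ is regularly varying of index $\alpha$. Breiman's theorem (applicable since $E[C^{\alpha+\varepsilon}]<\infty$) gives $P(CR^{(n-1)}>x) \sim E[C^\alpha]\, P(R^{(n-1)}>x)$, so the summands $C_j R_j^{(n-1)}$ are regularly varying of index $\alpha$, the same as $N$, with finite mean $E[C]\,E[R^{(n-1)}] = E[C]E[Q](1-\rho^n)/(1-\rho)$. The combined random-sum asymptotic then gives
$$P\!\left(\sum_{j=1}^N C_j R_j^{(n-1)} > x\right) \sim \bigl(E[C]\,E[R^{(n-1)}]\bigr)^\alpha P(N>x) \;+\; E[N]\, P(CR^{(n-1)}>x).$$
Plugging in Breiman, the inductive hypothesis, and $E[N]E[C^\alpha]=\rho_\alpha$, and re-indexing $k\mapsto k+1$ in the second sum, the right-hand side collapses into
$$\frac{(E[C]E[Q])^\alpha}{(1-\rho)^\alpha}\sum_{k=0}^n \rho_\alpha^{k} (1-\rho^{n-k})^\alpha P(N>x),$$
with the ``new'' term $(1-\rho^n)^\alpha$ precisely filling the $k=0$ slot. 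The additive $Q^{(0)}$ remains $o(P(N>x))$ throughout and is absorbed into the error.

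\textbf{Main obstacle.} The delicate step is making the combined random-sum asymptotic rigorous when the summand and the count share the regular-variation index $\alpha$. Off-the-shelf versions typically demand a bit of extra moment of the summand above $\alpha$, which is not granted by regular variation of $R^{(n-1)}$ alone. I would handle this by combining Lemma~\ref{L.Stability} --- which supplies $E[(R^{(n-1)})^\beta]<\infty$ for every $\beta<\alpha$ --- with a truncation of the summands at a scale growing much slower than $x$: the ``small'' (truncated) part is controlled by a law-of-large-numbers / Chebyshev estimate paired with the tail of $N$ to produce the $(E[C]E[R^{(n-1)}])^\alpha P(N>x)$ term, while the ``large'' (excess) part, combined with the union bound $E[N]P(CR^{(n-1)}>x)$, yields the Breiman contribution; the probability of two or more large summands is $o(P(N>x))$ and drops out. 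The extra moment $E[C^{\alpha+\varepsilon}]<\infty$ is what makes both Breiman and the truncation control viable, and restricting to fixed $n$ is what keeps the argument self-contained --- uniformity in $n$ is the business of Proposition~\ref{P.UniformBound}, not of this lemma.
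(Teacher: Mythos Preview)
Your proposal is correct and follows essentially the same route as the paper: induction on $n$, with the base case handled via the random-sum asymptotic for a regularly varying count and light-tailed summands, and the inductive step via Breiman's lemma together with the combined random-sum asymptotic when the count and the summand share the same regular-variation index. The only cosmetic difference is that the paper invokes the off-the-shelf result (Lemma~3.7(5) in \cite{Jess_Miko_06}) for the same-index random-sum asymptotic and then manipulates the constant, whereas you sketch the truncation argument behind that lemma; your extra caution about moments of $R^{(n-1)}$ is unnecessary for fixed $n$ but harmless.
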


\bigskip

\begin{lem} \label{L.W_n_Finite_n}
Suppose that $P(N > x) = x^{-\alpha} L(x)$ with $L(\cdot)$ slowly varying, $\alpha > 1$, and $E[Q^{\alpha+\epsilon}] < \infty$, $E[C^{\alpha+\epsilon}] < \infty$ for some $\epsilon > 0$. Let $\rho = E[N] E[C]$ and $\rho_\alpha = E[N] E[C^\alpha]$. Then, for any fixed $n \in \{1, 2, 3,\dots\}$,
$$P(W_n > x) \sim (E[C] E[Q])^\alpha  \sum_{k=0}^{n-1} \rho_\alpha^k  \rho^{(n-1-k)\alpha} P(N > x)$$
as $x \to \infty$, where $W_n$ was defined in Section \ref{SS.TreeConstruction}.
\end{lem}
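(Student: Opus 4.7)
The plan is to prove the lemma by induction on $n$, using the distributional recursion
$$W_n \stackrel{\mathcal{D}}{=} \sum_{k=1}^N C_k W_{n-1,k}$$
established at the end of Section~\ref{SS.TreeConstruction}, combined with the classical tail asymptotics for random sums with a regularly varying number of summands. Throughout, I write $\rho = E[N]E[C]$ and $\rho_\alpha = E[N]E[C^\alpha]$.

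For the base case $n=1$, note that $W_1 \stackrel{\mathcal{D}}{=} \sum_{k=1}^N C_k Q_k$ where the iid summands $\{C_kQ_k\}$ are independent of $N$ and satisfy $E[(CQ)^{\alpha+\epsilon}] = E[C^{\alpha+\epsilon}]E[Q^{\alpha+\epsilon}] < \infty$. The classical random sum tail asymptotic for regularly varying $N$ with lighter-tailed summands then gives
$$P(W_1 > x) \sim (E[C]E[Q])^\alpha P(N > x),$$
which matches the formula (the only surviving term being $k=0$). For the inductive step, assume
$$P(W_{n-1}>x) \sim D_{n-1} P(N>x), \qquad D_{n-1} := (E[C]E[Q])^\alpha \sum_{k=0}^{n-2} \rho_\alpha^k \rho^{(n-2-k)\alpha}.$$
Since $E[C^{\alpha+\epsilon}]<\infty$, Breiman's theorem yields $P(CW_{n-1}>x) \sim E[C^\alpha] D_{n-1} P(N>x)$, so the iid summands $\{C_kW_{n-1,k}\}$ are themselves regularly varying of index $\alpha$, with mean $E[CW_{n-1}] = E[C]\cdot(E[N]E[C])^{n-1}E[Q] = \rho^{n-1}E[C]E[Q]$ from the moment identity of Section~\ref{S.Moments}. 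Applying the random sum asymptotic in the case where both index and summands are regularly varying of index $\alpha$,
$$P(W_n > x) \sim E[N]\, P(CW_{n-1}>x) \,+\, (E[CW_{n-1}])^\alpha P(N>x),$$
one obtains
$$P(W_n>x) \sim \bigl(\rho_\alpha D_{n-1} + (E[C]E[Q])^\alpha \rho^{(n-1)\alpha}\bigr) P(N>x),$$
and reindexing $k \mapsto k+1$ in the sum defining $\rho_\alpha D_{n-1}$, while absorbing the extra $\rho^{(n-1)\alpha}$ term as the new $k=0$ contribution, reproduces the claimed formula.

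The main obstacle is the last displayed asymptotic, since both $N$ and the summands $C_kW_{n-1,k}$ are regularly varying of the same index $\alpha$, so that both ``one summand is large'' and ``$N$ itself is large'' contribute at leading order (in contrast to the base case, where only the first mechanism survives). To handle this, I would condition on $\{N \le (1-\delta)E[N] \log x\}$ versus its complement: on the first event, $W_n$ behaves like a sum of a deterministic number of regularly varying summands, producing the $E[N]P(CW_{n-1}>x)$ contribution via the standard single-large-jump heuristic and Lemma~3.2-type subexponential bounds; on the complement, a large-deviation estimate together with $E[CW_{n-1}] = \rho^{n-1}E[C]E[Q]$ and the regular variation of $N$ yields the $(E[CW_{n-1}])^\alpha P(N>x)$ contribution. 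An alternative, more economical route is to imitate line-by-line the argument used to prove the analogous Lemma~\ref{L.Finite_n} for $R^{(n)}$, which handles exactly this mixed regime; Lemma~\ref{L.W_n_Finite_n} is structurally simpler (no additive $Q^{(0)}$ term, and $W_k$ of different generations do not appear), so that proof should specialize without complication.
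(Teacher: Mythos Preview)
Your proposal is correct and matches the paper's approach: the paper omits the proof of this lemma, stating only that it is ``basically the same as that of Lemma~\ref{L.Finite_n}'', and your induction via the recursion $W_n \stackrel{\mathcal{D}}{=} \sum_{k=1}^N C_k W_{n-1,k}$, Breiman's lemma, and the two-term random sum asymptotic (Lemma~3.7(5) in \cite{Jess_Miko_06}) is exactly that argument specialized to $W_n$. Your final paragraph's conditioning sketch is unnecessary---the displayed asymptotic $P(W_n>x)\sim E[N]P(CW_{n-1}>x)+(E[CW_{n-1}])^\alpha P(N>x)$ is precisely the cited black-box result, so you can simply invoke it as the paper does for $R^{(n)}$.
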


From this result, provided $\rho \vee \rho_\alpha < 1$, it is to be expected that a bound of the form
$$P(W_n > x) \leq K \eta^n P(N > x)$$
might hold for all $n$ and $x \geq 1$, for some $\rho \vee \rho_\alpha < \eta < 1$. Such a bound will provide the necessary tools to ensure that $R-R^{(n)}$ is negligible for large enough $n$, allowing the exchange of limits in Lemma \ref{L.Finite_n}. Proving this result is the main technical contribution of this section; the actual proof is given in Section \ref{SS.NDominates_Proofs}. This bound may be of independent interest for computing the distributional properties of other recursions on branching trees, e.g. it is straightforward to apply our method to study the solution to 
$$R = Q + \max_{1 \leq i \leq N} C_i R_i,$$
and similar recursions. 

\bigskip

\begin{prop} \label{P.UniformBound}
Suppose $P(N > x) = x^{-\alpha} L(x)$, with $L(\cdot)$ slowly varying and $\alpha > 1$, $E[C^{\alpha+\nu}] < \infty$, $E[Q^{\alpha+\nu}] < \infty$ for some $\nu > 0$, and let $E[N] \max\{E[C^{\alpha}], E[C]\} < \eta < 1$.  Then, there exists a constant $K = K(\eta,\nu) > 0$ such that for all $n \geq 1$ and all $x \geq 1$,
\begin{equation} \label{eq:UniformBound}
P(W_n > x) \leq K  \eta^n P(N > x).
\end{equation}
\end{prop}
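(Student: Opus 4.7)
The natural approach is induction on $n$, maintaining the stronger statement $P(W_n > x) \leq K\eta^n P(N > x)$ uniformly for $x \geq 1$ with a single constant $K = K(\eta, \nu)$ chosen large enough at the outset. The base case, $W_0 = Q$, follows by combining Markov's inequality with the $(\alpha+\nu)$-moment of $Q$ and Potter's bound on $P(N > x)$; the contribution from a compact interval $[1, x_0]$ is absorbed into $K$ using $P(N > x_0) > 0$.

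For the inductive step I use $W_n \stackrel{\mathcal{D}}{=} \sum_{k=1}^N C_k W_{n-1,k} =: \sum_{k=1}^N Y_k$, with $Y_k$ iid copies of $Y = CW_{n-1}$ having common mean $\mu_{n-1} = E[C]E[Q]\rho^{n-1}$ and $\rho = E[N]E[C] < \eta$. I then apply the classical ``one big jump'' decomposition
$$P(W_n > x) \leq P\left(\max_{k \leq N} Y_k > x/2\right) + P\left(\sum_{k=1}^N Y_k 1_{\{Y_k \leq x/2\}} > x\right) =: I_1 + I_2.$$
For $I_1$, the union bound yields $I_1 \leq E[N] P(CW_{n-1} > x/2)$; conditioning on $C$ and inserting the inductive hypothesis, with Potter's bound controlling $P(N > x/(2c))/P(N > x)$ by a multiple of $(2c)^\alpha$ on $\{C \leq x/2\}$ and the $(\alpha+\nu)$-moment of $C$ absorbing the contribution from $\{C > x/2\}$, gives $I_1 \leq K\eta^{n-1} E[N] E[C^\alpha] (1+o(1)) P(N > x)$. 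Since $E[N]E[C^\alpha] < \eta$, this is at most a strict fraction of $K\eta^n P(N > x)$ for $x$ sufficiently large.

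For $I_2$, I further decompose on $\{N \leq M\} \cup \{N > M\}$ with a threshold $M = M(x,n)$ tuned so that Potter's bound yields $P(N > M) = O(\eta^n P(N > x))$; a natural choice is $M$ of order $x \eta^{-n/\alpha}$. On $\{N \leq M\}$, the truncated summands $\tilde Y_k = Y_k 1_{\{Y_k \leq x/2\}}$ have mean at most $\mu_{n-1}$, and because $\alpha > 1$ and $\rho < \eta$ imply $\rho < \eta^{1/\alpha}$, the quantity $M\mu_{n-1}$ becomes much smaller than $x$ for large $n$. Centering and applying Markov's inequality with a $p$-th moment for $p \in (1,\alpha)$ close enough to $\alpha$ that $E[N]\max\{E[C^p], E[C]\} < \eta$, combined with von Bahr--Esseen (for $p \in (1,2]$) or Rosenthal (for $p > 2$ when $\alpha > 2$), and invoking Lemma~\ref{L.GeneralMoment} to get $E[Y^p] \leq E[C^p] K_p \eta^{n-1}$, produces a bound of order $\eta^n P(N > x)$. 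The finitely many small-$n$ exceptions where $M\mu_{n-1} > x/2$ are absorbed into $K$.

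The main obstacle is closing the induction with a single constant $K$ that is independent of $n$ and works uniformly in $x \geq 1$. This requires exploiting the two strict inequalities $E[N]E[C^\alpha] < \eta$ (for $I_1$) and $\rho^\alpha < \eta$ (for the $\{N > M\}$ part of $I_2$) to extract a coefficient strictly less than $\eta$ on the right-hand side after applying the inductive hypothesis, while the parameters $p$ and $M$ must simultaneously satisfy several competing constraints. Potter's bound introduces multiplicative slack that needs to be controlled by the $\epsilon$-freedom in the exponent, and the $x \in [1, x_P]$ regime must be absorbed into $K$ via compactness and $P(N > x_P) > 0$. Unlike classical heavy-tailed random-sum bounds where the summand distribution is fixed, here the summand tail is only inductively controlled, so each inequality must be engineered so that the inductive constant $K$ propagates forward without blowing up with $n$.
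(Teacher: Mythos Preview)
Your inductive one-big-jump strategy is natural, but it has two gaps. The minor one: cutting the maximum at $x/2$ costs a factor $2^\alpha$ that you have silently dropped—after conditioning on $C$ and inserting the hypothesis, $I_1 \lesssim K\eta^{n-1}E[N]E[C^\alpha]\,2^\alpha P(N>x)$, and $E[N]E[C^\alpha]\,2^\alpha$ need not be $<\eta$. This is fixable by cutting at $(1-\epsilon)x$ with $\epsilon$ chosen so that $E[N]E[C^\alpha](1-\epsilon)^{-\alpha-1}<\eta$; the paper does exactly this.

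The decisive gap is in $I_2$. A $p$-th moment bound on the centered truncated sum with $p<\alpha$—and you are forced into $p<\alpha$ because Lemma~\ref{L.GeneralMoment} needs $E[N^p]<\infty$—yields at best $C\,E[N]E[Y^p]\,x^{-p}\le C'\eta_p^{\,n}x^{-p}$ with $\eta_p<\eta$. But the target is $K\eta^n x^{-\alpha}L(x)$, and for \emph{every} fixed $n$ the ratio $x^{-p}/(x^{-\alpha}L(x))\to\infty$ since $p<\alpha$; the induction cannot close. Your ``finitely many small-$n$ exceptions'' clause does not cover this, because the failure occurs at every finite $n$, not just small ones. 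The paper's remedy is twofold. First, in its one-step bound (Lemma~\ref{L.Bound1Iter}) it truncates at the much smaller level $y/\log y$ (with $y=\epsilon x$), which forces an extra layer in the decomposition peeling off the event that the \emph{second}-largest summand exceeds $y/\log y$, and then applies a Chernoff-type estimate for truncated heavy-tailed sums (Lemma~\ref{L.TruncBound}, Corollary~\ref{C.SimpleTrunc}) giving $y^{-\kappa}$ for arbitrary $\kappa$—fast enough to beat $x^{-\alpha}$. Second, it splits into three $n$-regimes: small $n$ via the finite-$n$ asymptotics of Lemma~\ref{L.W_n_Finite_n}; medium $n\le c\log x/|\log\eta|$ via induction on this sharper one-step bound; and large $n\ge c\log x/|\log\eta|$ via the crude Markov/moment bound you propose for $I_2$, which works precisely there because the extra geometric gain $(\eta_p/\eta)^n\le x^{-c\log(\eta/\eta_p)/|\log\eta|}$ compensates the deficit $x^{\alpha-p}$.
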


\bigskip

We would also like to point out that a bound of type \eqref{eq:UniformBound} resembles a classical result by Kesten (see Lemma~7 on p.~149 of \cite{Ath_McD_Ney_78}) stating that the sum of heavy-tailed (subexponential) random variables satisfies
$$P( X_1 + \dots + X_n > x) \leq K (1+\epsilon)^n P(X_1 > x),$$
uniformly for all $n$ and $x$, for any $\epsilon > 0$ (see also \cite{Den_Foss_Kor_09} for more recent work). The main difference between this result and \eqref{eq:UniformBound} is that while $n$ above refers to the number of terms in the sum, in \eqref{eq:UniformBound} it refers to the depth of the recursion. This makes the derivation of \eqref{eq:UniformBound} considerably more complicated, and perhaps implausible if it were not for the fact that we restrict our attention to regularly varying distributions, as opposed to the general subexponential class.

In view of \eqref{eq:UniformBound}, we can now prove the main theorem of this section.

\begin{thm} \label{T.Main_N}
Suppose $P(N > x) = x^{-\alpha} L(x)$, with $L(\cdot)$ slowly varying and $\alpha > 1$. Let $\rho = E[N] E[C]$ and $\rho_\alpha = E[N] E[C^\alpha]$. Assume
$\rho \vee \rho_\alpha < 1$, and $E[C^{\alpha+\epsilon}] < \infty$, $E[Q^{\alpha+\epsilon}] < \infty$ for some $\epsilon > 0$. Then, 
$$P(R > x) \sim \frac{(E[C]E[Q])^\alpha}{(1-\rho)^\alpha(1-\rho_\alpha)} P(N > x)$$
as $x \to \infty$, where $R$ was defined by \eqref{eq:R_Def}.  
\end{thm}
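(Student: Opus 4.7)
The plan is to reduce the problem to the finite-$n$ asymptotic of Lemma~\ref{L.Finite_n} combined with a tail control of $R - R^{(n)}$ supplied by Proposition~\ref{P.UniformBound}, and then exchange the limits $n\to\infty$ and $x\to\infty$. Write $R = R^{(n)} + T_n$, with $T_n := R - R^{(n)} = \sum_{k=n+1}^\infty W_k \geq 0$, and pick $\eta$ so that $\rho \vee \rho_\alpha < \eta < 1$. Since $R \geq R^{(n)}$, Lemma~\ref{L.Finite_n} yields
$$\liminf_{x\to\infty}\frac{P(R>x)}{P(N>x)} \geq \frac{(E[C]E[Q])^\alpha}{(1-\rho)^\alpha}\sum_{k=0}^n \rho_\alpha^k(1-\rho^{n-k})^\alpha,$$
and monotone convergence as $n\to\infty$ (each summand increases to $\rho_\alpha^k$ and $\sum_{k\geq 0}\rho_\alpha^k = 1/(1-\rho_\alpha) < \infty$) gives the claimed lower bound.

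For the matching upper bound, fix $\delta \in (0,1)$ and apply the union bound
$$P(R > x) \leq P(R^{(n)} > (1-\delta)x) + P(T_n > \delta x).$$
By Lemma~\ref{L.Finite_n} and the regular variation of $P(N > \cdot)$, the first term divided by $P(N > x)$ converges as $x\to\infty$ to $(1-\delta)^{-\alpha}$ times the finite-$n$ constant. Hence it suffices to establish the tail estimate
$$\limsup_{x\to\infty}\frac{P(T_n > \delta x)}{P(N > x)} \leq C_\delta\, \eta^{n+1}, \qquad (\star)$$
for some constant $C_\delta$ independent of $n$; then letting $n\to\infty$ kills this error, and finally $\delta \to 0$ removes the $(1-\delta)^{-\alpha}$ factor and matches the lower bound.

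To prove $(\star)$ I choose $\theta \in (\eta^{1/\alpha}, 1)$ and then $\epsilon' > 0$ small enough that $\theta^{\alpha+\epsilon'} > \eta$, and apply the weighted union bound with $a_k = (1-\theta)\theta^{k-n-1}$,
$$P(T_n > \delta x) \leq \sum_{k=n+1}^\infty P(W_k > a_k \delta x).$$
Split the sum at $k^*(x) := n+1 + \lfloor \log((1-\theta)\delta x)/\log(1/\theta)\rfloor$, the largest $k$ for which $a_k\delta x \geq 1$. For $k \leq k^*(x)$, Proposition~\ref{P.UniformBound} gives $P(W_k > a_k\delta x) \leq K\eta^k P(N > a_k\delta x)$, and Potter's bound upgrades this to $K'\eta^k (a_k\delta)^{-\alpha-\epsilon'} P(N > x)$; summing the resulting geometric series in $(\eta/\theta^{\alpha+\epsilon'})^{k-n-1}$ produces a contribution bounded by $C_\delta \eta^{n+1} P(N > x)$. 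For $k > k^*(x)$, where $a_k\delta x < 1$, I use Markov's inequality with an exponent $\beta \in (1,\alpha)$ chosen close enough to $\alpha$ that $\eta_\beta := E[N]\max\{E[C], E[C^\beta]\} < \theta^\beta$ (feasible because $\eta_\beta \to \rho \vee \rho_\alpha < \eta < \theta^\alpha$ as $\beta\uparrow\alpha$), together with the moment bound $E[W_k^\beta] \leq K\eta_\beta^k$ from Lemma~\ref{L.GeneralMoment}; the resulting geometric sum is dominated by a term of order $\eta_\beta^n x^{-b/a}$ with $b/a := \log\eta_\beta/\log\theta$, and since $\eta_\beta < \theta^\alpha$ forces $b/a > \alpha$ this term is $o(P(N > x))$ and contributes nothing to $(\star)$.

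The crux of the argument, and the main obstacle, is precisely the tail bound $(\star)$: one must simultaneously extract the correct regularly varying decay in $x$ and geometric decay $\eta^{n+1}$ in $n$. The decay in $x$ is delivered by Proposition~\ref{P.UniformBound} through Potter's bound, and the choice $\theta > \eta^{1/\alpha}$ is what simultaneously ensures convergence of the geometric series in $k$ and isolates the leading factor $\eta^{n+1}$. The residual range $k > k^*(x)$, where Proposition~\ref{P.UniformBound} ceases to be efficient because its right-hand side exceeds one, has to be absorbed by a moment bound that succeeds only because of the strict inequality $\rho \vee \rho_\alpha < \eta$ in the hypothesis.
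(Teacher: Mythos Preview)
Your proof is correct and follows essentially the same route as the paper's: both combine Lemma~\ref{L.Finite_n} with the uniform tail bound of Proposition~\ref{P.UniformBound}, control $P(R-R^{(n)}>\delta x)$ via a geometric splitting $\sum_k P(W_k>(1-\theta)\theta^{k-n-1}\delta x)$, invoke Potter's bound, and finish by sending $n\to\infty$ then $\delta\to 0$. Two cosmetic differences: you separate the lower bound cleanly via $R\ge R^{(n)}$ (the paper does a two-sided triangle inequality against $H_\alpha P(N>x)$), and you explicitly treat the range $k>k^*(x)$ with a moment bound, whereas the paper applies the Potter-type inequality uniformly in $k$ --- which is in fact legitimate once one notes that for $y<1$ the trivial bound $P(N>y)\le 1$ already implies $P(N>y)\le A'(y/x)^{-\alpha-1}P(N>x)$, so your extra split is careful but not strictly needed.
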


{\sc Remarks:} (i) A related result that also allows $Q$ and $N$ to be dependent was derived very recently in \cite{Volk_Litv_08} using transform methods and tauberian theorems under the moment conditions $E[Q]< 1$, $E[C] = (1-E[Q])/E[N]$. (ii) Note that this result implies the classical result on the busy period of an M/G/1 queue derived in \cite{Mey_Teug_80}. Specifically, the total number of customers in a busy period $B$ satisfies the recursion $B \stackrel{\mathcal{D}}{=} 1 + \sum_{i=1}^{N(S)} B_i$, where the $B_i$'s are iid copies of $B$, $N(t)$ is a Poisson process of rate $\lambda$ and $S$ is the service distribution; $\{B_i\}$, $N(t)$ and $S$ are mutually independent and $\rho = E[N(S)] < 1$. Now, the recursion for $B$ is obtained from our theorem by setting $C \equiv 1$ and $Q \equiv 1$,  implying that $P(B > x) \sim P(N(S) > x)/ (1-\rho)^{\alpha+1}$. Next, one can obtain the asymptotics for the length of the busy period $P$ by using the identity $B = N(P)$. This can be easily derived, in spite of the fact that $N(t)$ and $P$ are correlated, since $N(t)$ is highly concentrated around its mean. For recent work on the power law asymptotics of the GI/GI/1 busy period see \cite{Zwart_01}. (iii) In view of Lemma \ref{L.Finite_n}, the theorem shows that the limits $\lim_{x \to \infty} \lim_{n \to \infty} P(R^{(n)} > x)/ P(N > x)$ are interchangeable. 

\begin{proof}[Proof of Theorem \ref{T.Main_N}]
Fix $0< \delta <1$ and $n_0 \geq 1$. Choose $\rho \vee \rho_\alpha < \eta < 1$ and use Proposition~\ref{P.UniformBound} to obtain that for some constant $K_0 > 0$, 
$$P(W_n > x) \leq K_0 \eta^n P(N > x)$$
for all $n \geq 1$ and all $x \geq 1$. Let $H_\alpha^{(n)} = (E[C]E[Q])^\alpha (1-\rho)^{-\alpha} \sum_{k=0}^n \rho_\alpha^k (1-\rho^{n-k})^\alpha$ and $H_\alpha = H_\alpha^{(\infty)}$.  Then,
\begin{align}
&\left| P(R > x) -   H_\alpha P(N > x) \right| \notag \\
&\leq \left| P(R > x) -  P(R^{(n_0)} > x) \right| \label{eq:Tail} \\
&\hspace{5mm} + \left| P(R^{(n_0)} > x) -   H_\alpha^{(n_0)} P(N > x) \right| \label{eq:FiniteIterations} \\
&\hspace{5mm} + \left| H_\alpha^{(n_0)} - H_\alpha \right| P(N > x). \label{eq:FiniteError}
\end{align}
By Lemma \ref{L.Finite_n}, there exists a function $\varphi(x) \downarrow 0$ as $x \to \infty$ such that 
$$ \left| P(R^{(n_0)} > x) -   H_\alpha^{(n_0)} P(N > x) \right| \leq \varphi(x) H_\alpha P(N > x).$$
To bound \eqref{eq:Tail} let $\beta = \eta^{1/(2\alpha+2)} < 1$ and note that
\begin{align*}
\left| P(R > x) -  P(R^{(n_0)} > x) \right| &\leq  P\left(R^{(n_0)} + (R-R^{(n_0)}) > x, \, R-R^{(n_0)} \leq \delta x\right)  -  P(R^{(n_0)} > x) \\
&\hspace{5mm} + P\left(R-R^{(n_0)} > \delta x\right) \\
&\leq P(R^{(n_0)} > (1-\delta) x) - P(R^{(n_0)} > x) + P\left( \sum_{n = n_0+1}^\infty W_n > \delta x \right)  \\
&\leq P(R^{(n_0)} > (1-\delta) x) - H_\alpha^{(n_0)} P(N > (1-\delta) x) \\
&\hspace{5mm} + H_\alpha^{(n_0)} P(N > x) - P(R^{(n_0)} > x)  \\
&\hspace{5mm} + H_\alpha^{(n_0)} P(N > (1-\delta) x) - H_\alpha^{(n_0)} P(N > x) \\
&\hspace{5mm} + \sum_{n=n_0+1}^\infty  P\left( W_n > \delta x 
(1-\beta) \beta^{n-n_0-1} \right) \\
&\leq \left\{ 2\varphi((1-\delta)x) \frac{P(N > (1-\delta) x)}{P(N > x)} \right. \\
&\hspace{5mm} + \left. \left( \frac{P(N > (1-\delta)x)}{P(N>x)}  - 1 \right) \right\} H_\alpha P(N > x) \\
&\hspace{5mm}  + \sum_{n = n_0+1}^\infty K_0 \eta^n P\left( N > \delta x 
(1-\beta) \beta^{n-n_0-1}  \right) ,
\end{align*}
where in the last inequality we applied the uniform bound from Proposition \ref{P.UniformBound}. The expression in curly brackets is bounded by
$$2 \varphi((1-\delta)x) (1-\delta)^{-\alpha} \frac{L((1-\delta)x)}{L(x)} + \left( (1-\delta)^{-\alpha} \frac{L((1-\delta)x)}{L(x)} -1 \right) \to (1-\delta)^{-\alpha} - 1$$
as $x \to \infty$.  By Potter's Theorem (see Theorem 1.5.6 (ii) on p. 25 in \cite{BiGoTe1987}), there exists a constant $A = A(1) > 1$ such that
\begin{align*}
&\sum_{n = n_0+1}^\infty K_0 \eta^n P\left( N > \delta x 
(1-\beta) \beta^{n-n_0-1}  \right) \\
&\leq K_0 A \sum_{n = n_0+1}^\infty \eta^n \left( \delta 
(1-\beta) \beta^{n-n_0-1} \right)^{-\alpha-1}  P(N > x) \\
&= K_0 A (\delta(1-\beta))^{-\alpha-1}  (1-\eta^{1/2})^{-1} \eta^{n_0+1}  P(N > x) \\
&\leq K \delta^{-\alpha-1} \eta^{n_0} P(N > x) .
\end{align*}
Next, for \eqref{eq:FiniteError} simply note that
\begin{align*}
&\frac{1}{H_\alpha} \left| H_\alpha^{(n_0)} - H_\alpha \right| \\
&= (1-\rho_\alpha) \left( \sum_{k=0}^\infty \rho_\alpha^k - \sum_{k=0}^{n_0} \rho_\alpha^k (1-\rho^{n_0-k})^\alpha  \right)  \\
&= (1-\rho_\alpha) \sum_{k=0}^{n_0} \rho_\alpha^k (1 - (1-\rho^{n_0-k})^\alpha ) + (1-\rho_\alpha) \sum_{k=n_0+1}^\infty \rho_\alpha^k \\
&\leq (1-\rho_\alpha) \sum_{k=0}^{n_0} \rho_\alpha^k \alpha \rho^{n_0-k}  + \rho_\alpha^{n_0+1} \\
&\leq [\alpha (1-\rho_\alpha) (n_0+1) + \rho_\alpha] (\rho_\alpha \vee \rho)^{n_0} \\
&\leq K \eta^{n_0} .
\end{align*}
Finally, by replacing the preceding estimates in \eqref{eq:Tail} - \eqref{eq:FiniteError}, we obtain
\begin{align*}
\lim_{x \to \infty} \left| \frac{P(R > x)}{H_\alpha P(N > x)} - 1 \right| &\leq (1-\delta)^{-\alpha} - 1 + K \delta^{-\alpha-1} \eta^{n_0} .
\end{align*}
Since the right hand side can be made arbitrarily small by first letting $n_0 \to \infty$ and then $\delta \downarrow 0$, the result of the theorem follows. 
\qed\end{proof}

{\bf Engineering implications.}  Recall that for Google's PageRank algorithm the weights are given by $C_i = c/D_i < 1$, where $0 < c < 1$ is a constant related to the damping factor and the number of nodes in the Web graph, and $D_i$ corresponds to the out-degree of a page. We point out that dividing the ranks of neighboring pages by their out-degree has the purpose of decreasing the contribution of pages with highly inflated referencing. However, Theorem \ref{T.Main_N} reveals that the page rank is essentially insensitive to the parameters of the out-degree distribution, which means that PageRank basically reflects the popularity vote given by the number of references $N$. This same observation was previously made in \cite{Volk_Litv_08}. 

Furthermore, Theorem \ref{T.Goldie} clearly shows that the choice of weights $C_i$ in the ranking algorithm can determine the distribution of $R$ as well. Note that for the PageRank algorithm the weights $C_i = c/D_i < 1$ can never dominate the asymptotic behavior  of $R$ when $N$ is a power law. Therefore, Theorem \ref{T.Goldie} suggests a potential development of new ranking algorithms where the ranks will be much more sensitive to the weights.

\section{The case when $Q$ dominates} \label{S.QDominates}

This section of the paper treats the case when the heavy-tailed behavior of $R$ arises from the $\{Q_i\}$, known in the autoregressive processes literature as innovations. The results presented here are very similar to those in Section \ref{S.NDominates}, and so are their proofs. We will therefore only present the statements of the results and skip most of the proofs. We start with the equivalent of Lemmas \ref{L.Finite_n}  and \ref{L.W_n_Finite_n} in this context;  their proofs are given in Section \ref{SS.QDominate_Proofs}. 

\begin{lem} \label{L.Finite_nQ}
Suppose $P(Q > x) = x^{-\alpha} L(x)$ with $L(\cdot)$ slowly varying, $\alpha > 1$, and $E[N^{\alpha+\epsilon}] < \infty$, $E[C^{\alpha+\epsilon}] < \infty$ for some $\epsilon > 0$; let $\rho_\alpha = E[N] E[C^\alpha]$ . Then, for any fixed $n \in \{1,2,3,\dots\}$,
$$P(R^{(n)} > x) \sim \sum_{k=0}^n \rho_\alpha^k \, P(Q > x)$$
as $x \to \infty$, where $R^{(n)}$ was defined in Section \ref{SS.TreeConstruction}. 
\end{lem}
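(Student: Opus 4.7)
The plan is to prove Lemma~\ref{L.Finite_nQ} by induction on $n$, starting from the distributional identity
\[
R^{(n)} \stackrel{\mathcal{D}}{=} Q^{(0)} + \sum_{j=1}^{N^{(0)}} C_j^{(1)} R_j^{(n-1)}
\]
supplied by \eqref{eq:MainRec}, in which $Q^{(0)}$, $N^{(0)}$, $\{C_j^{(1)}\}_{j\ge 1}$ and $\{R_j^{(n-1)}\}_{j\ge 1}$ are mutually independent, the $R_j^{(n-1)}$ are iid copies of $R^{(n-1)}$, and the $C_j^{(1)}$ are iid copies of $C$. The base case $n=0$ is immediate since $R^{(0)} = Q^{(0)}$ and $\sum_{k=0}^{0}\rho_\alpha^k = 1$, giving $P(R^{(0)}>x) = P(Q>x)$.

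For the inductive step, assume $P(R^{(n-1)}>x) \sim c_{n-1} P(Q>x)$ with $c_{n-1} = \sum_{k=0}^{n-1}\rho_\alpha^k$, so that $R^{(n-1)}$ is regularly varying of index $\alpha$ with slowly varying factor $c_{n-1}L$. I would then carry out three standard heavy-tail steps. (i)~Since $C$ is independent of $R^{(n-1)}$ and $E[C^{\alpha+\epsilon}]<\infty$, Breiman's theorem yields $P(CR^{(n-1)}>x)\sim E[C^\alpha] c_{n-1} P(Q>x)$, so each $C_j^{(1)}R_j^{(n-1)}$ is regularly varying of the same index. (ii)~Since the $\{C_j^{(1)}R_j^{(n-1)}\}_{j\ge 1}$ are iid regularly varying and independent of $N^{(0)}$, and $E[(N^{(0)})^{\alpha+\epsilon}]<\infty$, the standard random-sum asymptotic for regularly varying summands gives
\[
P\!\left( \sum_{j=1}^{N^{(0)}} C_j^{(1)} R_j^{(n-1)} > x \right) \sim E[N]\,E[C^\alpha]\, c_{n-1} P(Q>x) = \rho_\alpha c_{n-1} P(Q>x).
\]
(iii)~Since $Q^{(0)}$ and the random sum are independent and both regularly varying of index $\alpha$ (hence subexponential), convolution closure gives
\[
P(R^{(n)} > x) \sim P(Q>x) + \rho_\alpha c_{n-1} P(Q>x) = \Bigl(1 + \rho_\alpha c_{n-1}\Bigr) P(Q>x) = \sum_{k=0}^n \rho_\alpha^k P(Q>x),
\]
which closes the induction.

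The step I expect to require the most care is the random-sum asymptotic in~(ii). It is a consequence of the classical fact that if $\{Y_i\}$ are iid with $P(Y_1>x)$ regularly varying of index $\alpha$ and $N$ is independent of them with $E[N^{\alpha+\epsilon}]<\infty$, then $P(\sum_{i=1}^N Y_i > x) \sim E[N]\,P(Y_1>x)$. Since $\alpha>1$, this can be established by truncating each summand at level $\delta x$, applying the deterministic-count tail asymptotic for subexponential summands, and then summing over the law of $N$ with $E[N^{\alpha+\epsilon}]<\infty$ providing the uniform integrability needed to pass the limit inside the expectation. The moment hypotheses $E[N^{\alpha+\epsilon}]<\infty$ and $E[C^{\alpha+\epsilon}]<\infty$ are exactly what is required to run both Breiman's theorem and this uniform integrability argument at each inductive step; no further hypothesis on $Q$ beyond regular variation is needed. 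The structural parallel with Lemma~\ref{L.Finite_n} is clear, the essential simplification being that here the heavy tail sits in the summand rather than in the random index, so the random-sum step is a standard tool instead of the main obstacle.
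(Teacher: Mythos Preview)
Your proposal is correct and follows essentially the same route as the paper's proof: induction on $n$ using the recursion \eqref{eq:MainRec}, with the three ingredients being Breiman's lemma, the random-sum tail asymptotic for regularly varying summands with $E[N^{\alpha+\epsilon}]<\infty$, and convolution closure for regularly varying tails. The paper simply cites these as Lemmas~4.2, 3.7(1) and 3.1 of \cite{Jess_Miko_06} rather than sketching their proofs, and starts the induction at $n=1$ instead of the (equally valid) trivial base $n=0$.
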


As for the case when $N$ dominates the asymptotic behavior of $R$, we can here expect that
$$P(R > x) \sim (1-\rho_\alpha)^{-1} P(Q > x),$$
and the technical difficulty is justifying the exchange of limits. The same techniques used in Section~\ref{S.NDominates} can be used in this case as well.  Therefore, we give a sketch of the arguments in Section~\ref{SS.QDominate_Proofs} but omit the proof. The following is the equivalent of Lemma \ref{L.W_n_Finite_n}. 

\bigskip

\begin{lem} \label{L.W_n_Finite_nQ}
Suppose $P(Q > x) = x^{-\alpha} L(x)$ with $L(\cdot)$ slowly varying, $\alpha > 1$, and $E[N^{\alpha+\epsilon}] < \infty$, $E[C^{\alpha+\epsilon}] < \infty$ for some $\epsilon > 0$; let $\rho_\alpha = E[N] E[C^\alpha]$. Then, for any fixed $n \in \{1, 2, 3,\dots\}$,
$$P(W_n > x) \sim \rho_\alpha^n P(Q > x)$$
as $x \to \infty$, where $W_n$ was defined in Section \ref{SS.TreeConstruction}.
\end{lem}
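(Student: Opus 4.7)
The plan is to proceed by induction on $n$, following the same scheme used for Lemma~\ref{L.W_n_Finite_n} but now with the heavy tail arising from $Q$ rather than from $N$. The key structural fact to exploit is the distributional recursion $W_n \stackrel{\mathcal{D}}{=} \sum_{i=1}^N C_i W_{n-1,i}$ established in Section~\ref{SS.TreeConstruction}, combined with two classical tools from the theory of regular variation: Breiman's theorem (if $X$ is regularly varying of index $\alpha$, $Y$ is independent of $X$, and $E[Y^{\alpha+\delta}]<\infty$, then $P(XY>x) \sim E[Y^\alpha] P(X>x)$) and the random-sum asymptotic (for iid $X_i$ regularly varying of index $\alpha$ and an independent integer-valued $N$ with $E[N^{\alpha+\delta}]<\infty$, one has $P(\sum_{i=1}^N X_i > x) \sim E[N] P(X_1>x)$).

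For the base case $n=1$, I would start from $W_1 = \sum_{i=1}^{N^{(0)}} C_i^{(1)} Q_i^{(1)}$, where all appearing random variables are mutually independent and the $Q_i^{(1)}$ are iid copies of $Q$. Breiman's theorem applied to a single product $C_1^{(1)} Q_1^{(1)}$ (using $E[C^{\alpha+\epsilon}]<\infty$) yields $P(C_1^{(1)} Q_1^{(1)} > x) \sim E[C^\alpha] P(Q>x)$, so each summand is regularly varying of index $\alpha$. The random-sum asymptotic (using $E[N^{\alpha+\epsilon}]<\infty$) then gives $P(W_1>x) \sim E[N] E[C^\alpha] P(Q>x) = \rho_\alpha P(Q>x)$, matching the claim.

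For the inductive step, assume $P(W_{n-1}>x) \sim \rho_\alpha^{n-1} P(Q>x)$, so that $W_{n-1}$ is regularly varying of index $\alpha$. The tree recursion $W_n \stackrel{\mathcal{D}}{=} \sum_{i=1}^N C_i W_{n-1,i}$ lets me repeat the same two-step argument: Breiman's theorem produces $P(C W_{n-1} > x) \sim E[C^\alpha] \rho_\alpha^{n-1} P(Q>x)$, and the random-sum asymptotic then delivers $P(W_n>x) \sim E[N] E[C^\alpha] \rho_\alpha^{n-1} P(Q>x) = \rho_\alpha^n P(Q>x)$, closing the induction.

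The only mildly delicate point is checking that the hypotheses of Breiman's theorem and of the random-sum asymptotic are in force at each inductive step. Breiman requires regular variation of $W_{n-1}$ (supplied by the inductive hypothesis) together with $E[C^{\alpha+\epsilon}]<\infty$ (assumed); the random-sum asymptotic requires regular variation of the iid summands $C_i W_{n-1,i}$ (just established) together with $E[N^{\alpha+\epsilon}]<\infty$ (assumed). Since $n$ is fixed, no uniformity in $n$ is needed, and in particular no analogue of Proposition~\ref{P.UniformBound} is required; this is precisely why this finite-$n$ statement is significantly simpler than the corresponding steady-state result for $R$, in which such a uniform bound would be needed to justify the exchange of $n \to \infty$ and $x \to \infty$.
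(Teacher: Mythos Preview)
Your proposal is correct and follows essentially the same approach that the paper indicates (the paper omits an explicit proof, noting it is ``basically the same'' as that of Lemma~\ref{L.Finite_nQ}): induction on $n$ via the recursion $W_n \stackrel{\mathcal{D}}{=} \sum_{i=1}^N C_i W_{n-1,i}$, Breiman's theorem (Lemma~4.2 of \cite{Jess_Miko_06}) for the product $C W_{n-1}$, and the random-sum asymptotic (Lemma~3.7 of \cite{Jess_Miko_06}) for $\sum_{i=1}^N C_i W_{n-1,i}$. Your verification that the hypotheses of both tools are met at each step is exactly what is needed.
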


\bigskip

The corresponding version of Proposition \ref{P.UniformBound} is given below.

\begin{prop} \label{P.UniformBoundQ}
Suppose $P(Q > x) = x^{-\alpha} L(x)$, with $L(\cdot)$ slowly varying and $\alpha > 1$, $E[C^{\alpha + \nu}] < \infty$, $E[N^{\alpha+\nu}] < \infty$ for some $\nu > 0$, and let $E[N] \max\{ E[C^{\alpha}] , E[C] \} < \eta < 1$. Then, there exists a constant $K = K(\eta,\nu) > 0$ such that for all $n \geq 1$ and all $x \geq 1$,
$$P(W_n > x) \leq K  \eta^n P(Q > x).$$
\end{prop}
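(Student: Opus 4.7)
The argument follows the same blueprint as Proposition~\ref{P.UniformBound}, with $Q$ now serving as the regularly varying driver in place of $N$; the heaviness enters the tree through the leaves rather than through the branching. I would proceed by induction on the depth $n$, exploiting the recursion $W_n \stackrel{\mathcal{D}}{=} \sum_{k=1}^N C_k W_{n-1,k}$ and the fact that the $W_{n-1,k}$ are iid copies of $W_{n-1}$, independent of $N$ and the $C_k$.

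For the base case $n=1$, $W_1 \stackrel{\mathcal{D}}{=} \sum_{k=1}^N C_k Q_k$ is a random sum of iid regularly varying variables. Since $E[C^{\alpha+\nu}] < \infty$, Breiman's lemma gives $P(C_k Q_k > x) \leq K_0 E[C^\alpha] P(Q > x)$ uniformly in $x \geq 1$, and a Kesten-type uniform bound for regularly varying random sums (essentially Lemma~7 on p.~149 of \cite{Ath_McD_Ney_78}, exploiting $E[N^{\alpha+\nu}] < \infty$) yields $P(W_1 > x) \leq K_1 E[N] E[C^\alpha] P(Q > x)$. Since $E[N] E[C^\alpha] < \eta$, enlarging the constant we obtain a bound of the required form $K \eta P(Q > x)$. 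For the inductive step, assume the bound holds at level $n-1$ and decompose $\{W_n > x\}$ by truncating each summand at $(1-\delta)x$: either a single term exceeds $(1-\delta)x$, or all summands are smaller yet their sum exceeds $x$. The single-big-jump part is handled by a union bound
$$P\bigl(\max_{1 \leq k \leq N} C_k W_{n-1,k} > (1-\delta) x\bigr) \leq E[N]\, P\bigl(C W_{n-1} > (1-\delta) x\bigr),$$
to which Breiman's lemma and the inductive hypothesis give a bound of at most $K (1-\delta)^{-\alpha} E[N] E[C^\alpha] \eta^{n-1} P(Q > x)$; for $\delta$ chosen small enough, this is at most $\tfrac{1}{2} K \eta^n P(Q > x)$.

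The main obstacle, just as in Proposition~\ref{P.UniformBound}, is the bulk part. The key difficulty here is that $E[Q^{\alpha+\nu}] = \infty$, so one cannot use moments of $W_n$ directly. Instead, I would work with the truncated summands $X_k := (C_k W_{n-1,k}) \wedge (1-\delta) x$ and apply Markov's inequality at the higher order $\alpha + \nu$:
$$P\Bigl(\sum_{k=1}^N X_k > x,\ \max_k C_k W_{n-1,k} \leq (1-\delta) x\Bigr) \leq x^{-(\alpha+\nu)} E\Bigl[\Bigl(\sum_{k=1}^N X_k\Bigr)^{\!\alpha+\nu}\,\Bigr].$$
The inner expectation is finite because of the truncation, and a Minkowski/H\"older decomposition that separates the diagonal $\sum_k E[X_k^{\alpha+\nu}]$ from the off-diagonal terms yields factors controlled by $E[N^{\alpha+\nu}]$ and by the truncated moments $E[X_1^{\alpha+\nu}]$. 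Applying Karamata's theorem to $E[X_1^{\alpha+\nu}] = (\alpha+\nu)\int_0^{(1-\delta)x} t^{\alpha+\nu-1} P(C W_{n-1} > t)\,dt$ together with the inductive tail bound produces a scaling like $K' \eta^{n-1} x^{\nu} L(x)$. Since $x^{-(\alpha+\nu)} \cdot x^{\nu} L(x) = P(Q > x)$, the right scale emerges, and the extra multiplicative constants are absorbed into the factor $\eta$ thanks to the strict gap $\eta - E[N]\max\{E[C^\alpha], E[C]\} > 0$; this gap, which is precisely what Lemma~\ref{L.GeneralMoment} exploits, is what prevents the induction constant $K$ from blowing up with $n$. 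Combining the two contributions and choosing $K$ large enough closes the induction and yields the uniform bound for all $n \geq 1$ and all $x \geq 1$.
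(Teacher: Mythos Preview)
Your single-big-jump piece is fine and matches the paper, but the bulk estimate does not close the induction. After truncating at $(1-\delta)x$ and applying Markov at order $\alpha+\nu$, the diagonal contribution alone already gives
\[
x^{-(\alpha+\nu)}\,E[N]\,E\bigl[X_1^{\alpha+\nu}\bigr]
\;\approx\;
E[N]\,E[C^\alpha]\,\frac{\alpha+\nu}{\nu}\,K\,\eta^{n-1}\,P(Q>x),
\]
by the inductive hypothesis and Karamata. The prefactor $E[N]E[C^\alpha]\frac{\alpha+\nu}{\nu}$ is a fixed constant that is \emph{not} controlled by the gap $\eta-E[N]\max\{E[C^\alpha],E[C]\}$: it exceeds $\eta$ as soon as $\nu$ is moderately small. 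So the total bound at step $n$ is $(\text{const}>\eta)\cdot K\eta^{n-1}P(Q>x)$, and the constant $K$ blows up geometrically in $n$. The gap absorbs the big-jump coefficient $(1-\delta)^{-\alpha}E[N]E[C^\alpha]$, not the Karamata constant coming from the truncated moment.

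The paper avoids this by not using a moment bound on the bulk at all. It invokes Lemma~\ref{L.Bound1Iter}, whose core is the Chernoff bound of Lemma~\ref{L.TruncBound}/Corollary~\ref{C.SimpleTrunc}: for iid summands truncated at $x/\log x$, the probability that the sum exceeds $x$ decays faster than any power of $x$, \emph{uniformly} in the law of the summands. Feeding only the moment of order $\alpha-\delta$ of $CW_{n-1}$ (which is finite, unlike order $\alpha+\nu$) into that bound produces a bulk term of order $\eta^{(2\wedge(\alpha-\delta))n}P(Q>x)$, strictly smaller than $\eta^n$; after a fixed burn-in $n_0$ this is $\leq \epsilon\,E[N]E[C^\alpha]\,\eta^n P(Q>x)$, which \emph{does} fit inside the gap. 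The argument also needs the three-regime split in $n$ (small $n$ via Lemma~\ref{L.W_n_Finite_nQ}, the Chernoff iteration for $n_0\le n\le c\log x/|\log\eta|$, and the $\beta$-moment Markov bound of Lemma~\ref{L.GeneralMoment} for $n\ge c\log x/|\log\eta|$), since Lemma~\ref{L.Bound1Iter} is only valid on the middle range; your plan does not account for this.
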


A sketch of the proof can be found in Section \ref{SS.QDominate_Proofs}.

\bigskip

And finally, the main theorem of this section. The proof again greatly resembles that of Theorem~ \ref{T.Main_N} and is therefore omitted. 

\begin{thm} \label{T.MainQ}
Suppose $P(Q > x) = x^{-\alpha} L(x)$, with $L(\cdot)$ slowly varying and $\alpha > 1$. Let $\rho = E[N] E[C]$ and $\rho_\alpha = E[N] E[C^\alpha]$. Assume $\rho \vee \rho_\alpha < 1$, and $E[C^{\alpha+\epsilon}] < \infty$, $E[N^{\alpha+\epsilon}] < \infty$ for some $\epsilon > 0$. Then, 
$$P(R > x) \sim (1-\rho_\alpha)^{-1} P(Q > x)$$
as $x \to \infty$, where $R$ was defined in \eqref{eq:R_Def}.  
\end{thm}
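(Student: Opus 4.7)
The plan is to mimic the proof of Theorem \ref{T.Main_N} essentially verbatim, with $Q$ playing the role previously played by $N$ and with $H_\alpha = (1-\rho_\alpha)^{-1}$ in place of the earlier constant. Write $H_\alpha^{(n_0)} = \sum_{k=0}^{n_0} \rho_\alpha^k$ and decompose
\begin{align*}
\bigl| P(R > x) - H_\alpha P(Q > x) \bigr|
&\leq \bigl| P(R > x) - P(R^{(n_0)} > x) \bigr| \\
&\quad + \bigl| P(R^{(n_0)} > x) - H_\alpha^{(n_0)} P(Q > x) \bigr| \\
&\quad + \bigl| H_\alpha^{(n_0)} - H_\alpha \bigr| P(Q > x).
\end{align*}
The middle term is controlled by Lemma \ref{L.Finite_nQ}, which for fixed $n_0$ gives a function $\varphi(x) \downarrow 0$ so that this term is $\leq \varphi(x) H_\alpha P(Q > x)$. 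The last term is deterministic in $x$ and equals $(\rho_\alpha^{n_0+1}/(1-\rho_\alpha)) P(Q > x)$, which vanishes geometrically in $n_0$.

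The work is in the first term. I would write $R - R^{(n_0)} = \sum_{n=n_0+1}^\infty W_n$ and, for fixed $0 < \delta < 1$, split on the event $\{R - R^{(n_0)} \leq \delta x\}$ versus its complement. On the good event one gets
$$P(R^{(n_0)} > (1-\delta)x) - P(R^{(n_0)} > x),$$
which by Lemma \ref{L.Finite_nQ} and regular variation of $P(Q > \cdot)$ converges as $x \to \infty$ to $H_\alpha^{(n_0)} ((1-\delta)^{-\alpha} - 1) P(Q > x)$ up to a $\varphi$ correction. The complementary event is handled by choosing $\beta = \eta^{1/(2\alpha+2)} < 1$, splitting $\delta x$ geometrically among the $W_n$ for $n > n_0$, and applying the uniform bound from Proposition \ref{P.UniformBoundQ} together with Potter's bounds on the slowly varying factor in $P(Q > \cdot)$; this produces a tail of the form $K \delta^{-\alpha-1} \eta^{n_0} P(Q > x)$.

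Putting the three pieces together and first letting $x \to \infty$ yields
$$\limsup_{x \to \infty} \left| \frac{P(R > x)}{H_\alpha P(Q > x)} - 1 \right| \leq (1-\delta)^{-\alpha} - 1 + K \delta^{-\alpha-1} \eta^{n_0}.$$
Letting $n_0 \to \infty$ and then $\delta \downarrow 0$ closes the argument. The only substantive step is the uniform bound in Proposition \ref{P.UniformBoundQ}, which has already been established; beyond that, the argument is a routine adaptation of Theorem \ref{T.Main_N} and the main ``obstacle'' is merely the bookkeeping of matching regularly varying tails and geometric decay rates in $n_0$.
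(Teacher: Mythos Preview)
Your proposal is correct and follows exactly the route the paper intends: the paper explicitly omits the proof of Theorem~\ref{T.MainQ} because it ``greatly resembles that of Theorem~\ref{T.Main_N},'' and your adaptation---replacing $N$ by $Q$, using Lemma~\ref{L.Finite_nQ} and Proposition~\ref{P.UniformBoundQ} in place of Lemma~\ref{L.Finite_n} and Proposition~\ref{P.UniformBound}, and taking $H_\alpha^{(n_0)} = \sum_{k=0}^{n_0} \rho_\alpha^k$---is precisely that resemblance made explicit. The simplification in the third term (here $|H_\alpha^{(n_0)} - H_\alpha| = \rho_\alpha^{n_0+1}/(1-\rho_\alpha)$ directly, versus the more involved estimate in Theorem~\ref{T.Main_N}) is the only notable difference, and it works in your favor.
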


Compare this result with Lemma A.3 in \cite{Mik_Sam_00}, where the autoregressive process of order one with regularly varying innovations is shown to be tail-equivalent to $Q$. In particular, if we set $N \equiv 1$ in Theorem \ref{T.MainQ} and let $A_k = \prod_{i=1}^{k-1} C_i$, our result reduces to 
$$P\left( \sum_{k=0}^\infty A_k Q_k > x \right) \sim \sum_{k=0}^\infty E[A_k^\alpha] P(Q > x),$$
which is in line with the commonly accepted intuition about heavy-tailed large deviations where large sums are due to one large summand $Q_k$.

\section{Proofs}\label{S.Proofs}

This section contains the proofs to most of the technical results presented in the paper, together with some auxiliary lemmas that are needed along the way.  The section is divided into four subsections, each corresponding to the content of Sections \ref{S.Moments}, \ref{S.C_dominates}, \ref{S.NDominates}, and \ref{S.QDominates}, respectively.

\subsection{Moments of $W_n$} \label{SS.Moments_Proofs}

Here we give the proof of the moment bound for the $\beta$-moment, $\beta > 1$, of the sum of the weights, $W_n$ of the $n$th generation. As an intermediate step, we present a lemma for the integer moments of $W_n$, but first we give the proof of Lemma \ref{L.Alpha_Moments}.

\begin{proof}[Proof of Lemma \ref{L.Alpha_Moments}]
Let $p = \lceil \beta \rceil \in \{2,3,\dots\}$ and $\gamma = \beta/p \in (0, 1]$. Define  $A_p(k) = \{ (j_1, \dots, j_k) \in \mathbb{Z}^k: j_1 + \dots + j_k = p, 0 \leq j_i < p\}$. Then,
\begin{align}
\left( \sum_{i=1}^k y_i \right)^\beta &= \left( \sum_{i=1}^k y_i \right)^{p \gamma} \notag \\
&= \left( \sum_{i=1}^k y_i^p + \sum_{(j_1,\dots,j_k) \in A_p(k)} \binom{p}{j_1,\dots,j_k} y_1^{j_1} \cdots y_k^{j_k} \right)^\gamma \notag  \\
&\leq \sum_{i=1}^k y_i^{p\gamma} + \left( \sum_{(j_1,\dots,j_k) \in A_p(k)} \binom{p}{j_1,\dots,j_k} y_1^{j_1} \cdots y_k^{j_k} \right)^\gamma, \notag
\end{align}
where for the last step we used the well known inequality $\left( \sum_{i=1}^k x_i \right)^\gamma \leq \sum_{i=1}^k x_i^\gamma$ for $0 < \gamma \leq 1$ and $x_i \geq 0$ (see the proof of Lemma \ref{L.MomentSmaller_1}). We now use Jensen's inequality to obtain 
\begin{align*}
E\left[ \left( \sum_{i=1}^k Y_i \right)^\beta - \sum_{i=1}^k Y_i^{\beta} \right] &\leq E\left[  \left( \sum_{(j_1,\dots,j_k) \in A_p(k)} \binom{p}{j_1,\dots,j_k} Y_1^{j_1} \cdots Y_k^{j_k} \right)^\gamma \right] \\
&\leq   \left( E\left[ \sum_{(j_1,\dots,j_k) \in A_p(k)} \binom{p}{j_1,\dots,j_k} Y_1^{j_1} \cdots Y_k^{j_k} \right] \right)^\gamma \\
&= \left(  \sum_{(j_1,\dots,j_k) \in A_p(k)} \binom{p}{j_1,\dots,j_k} E\left[ Y_1^{j_1} \cdots Y_k^{j_k} \right] \right)^\gamma.
\end{align*}
Since the $\{Y_i\}$ are iid, we have
$$E\left[ Y_1^{j_1} \cdots Y_k^{j_k} \right] = || Y||_{j_1}^{j_1} \cdots ||Y||_{j_k}^{j_k},$$
where $|| Y||_\kappa = E[|Y|^\kappa]^{1/\kappa}$ for $\kappa \geq 1$ and $|| Y ||_0 \triangleq 1$. Since $|| Y||_\kappa$ is increasing for $\kappa \geq 1$ it follows that $|| Y ||_{j_i}^{j_i} \leq || Y ||_{p-1}^{j_1}$. It follows that
$$|| Y||_{j_1}^{j_1} \cdots ||Y||_{j_k}^{j_k} \leq || Y ||_{p-1}^p,$$
which in turn implies that
\begin{align*}
E\left[ \left( \sum_{i=1}^k Y_i \right)^\beta - \sum_{i=1}^k Y_i^{\beta} \right] &\leq \left( \sum_{(j_1,\dots,j_k) \in A_p(k)} \binom{p}{j_1,\dots,j_k} || Y ||_{p-1}^p \right)^\gamma \\
&= || Y||_{p-1}^\beta (k^p - k)^\gamma \\
&\leq || Y ||_{p-1}^\beta k^\beta.
\end{align*}
\qed\end{proof}

\bigskip

\begin{lem} \label{L.IntegerMoment}
Suppose $E[Q^p]< \infty$, $E[N^p] < \infty$, and $E[N] \max\{ E[C^p], E[C] \} < 1$ for some $p \in \{2,3,\dots\}$. Then, there exists a constant $K_p > 0$ such that
$$E[ W_n^p ] \leq K_p \left( E[N] \max\{E[C], E[C^p]\} \right)^n $$
for all $n \geq 0$.  \vspace{-10pt}
\end{lem}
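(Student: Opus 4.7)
The plan is to prove the bound by induction on the integer $p$, with base case $p=1$ handled by the explicit identity $E[W_n] = (E[N]E[C])^n E[Q] = \eta_1^n E[Q]$ (using the shorthand $\eta_j := E[N]\max\{E[C], E[C^j]\}$), so $K_1 = E[Q]$ works. For the inductive step, assume the estimate for $p-1$: $E[W_m^{p-1}] \leq K_{p-1} \eta_{p-1}^m$ for all $m \geq 0$. A key preliminary is log-convexity of $s \mapsto \log E[C^s]$ (Lyapunov's inequality), which implies $E[C^{p-1}] \leq \max\{E[C], E[C^p]\}$ and hence $\eta_{p-1} \leq \eta_p$; consequently the inductive hypothesis can be strengthened to $E[W_m^{p-1}] \leq K_{p-1}\,\eta_p^m$.

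Using the distributional recursion $W_n \stackrel{\mathcal{D}}{=} \sum_{i=1}^N C_i W_{n-1,i}$ from Section~\ref{SS.TreeConstruction}, I would apply Lemma~\ref{L.Alpha_Moments} conditionally on $N$ with $Y_i := C_i W_{n-1,i}$ and $k := N$, and then take expectation, to obtain
\[
E[W_n^p] \leq E[N]E[C^p]E[W_{n-1}^p] + E[N^p]\bigl(E[C^{p-1}]E[W_{n-1}^{p-1}]\bigr)^{p/(p-1)}.
\]
Invoking $E[N]E[C^p] \leq \eta_p$, $E[C^{p-1}] \leq \max\{E[C], E[C^p]\}$, and the inductive bound $E[W_{n-1}^{p-1}] \leq K_{p-1}\,\eta_p^{n-1}$, this simplifies to the one-step recurrence
\[
E[W_n^p] \leq \eta_p\, E[W_{n-1}^p] + M_p\,\eta_p^{(n-1)p/(p-1)},
\]
for a finite constant $M_p$ depending only on $p$, $E[N^p]$, $E[C]$, $E[C^p]$, and $K_{p-1}$.

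To close the induction, set $a_n := E[W_n^p]/\eta_p^n$ and divide through to get $a_n \leq a_{n-1} + M_p\,\eta_p^{(n-p)/(p-1)}$. Summing from $a_0 = E[Q^p]$ and noting that $\sum_{k=1}^\infty \eta_p^{(k-p)/(p-1)}$ is a convergent geometric series with ratio $\eta_p^{1/(p-1)} < 1$, one obtains $a_n \leq E[Q^p] + M_p/\bigl(\eta_p(1-\eta_p^{1/(p-1)})\bigr) =: K_p < \infty$, which completes the induction. The main obstacle is showing that the extra term on the right-hand side of the recurrence decays rapidly enough in $n$ to keep $a_n$ uniformly bounded. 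This is exactly where the Hölder-type exponent $p/(p-1) > 1$ supplied by Lemma~\ref{L.Alpha_Moments} is essential: it stretches the bound $\eta_p^{n-1}$ coming from the inductive hypothesis into $\eta_p^{(n-1)p/(p-1)}$, producing a geometric decay rate $\eta_p^{1/(p-1)} < 1$ that makes the telescoping sum finite. Without this extra margin, one would only obtain $a_n \leq a_{n-1} + \mathrm{const}$, giving linear growth in $n$ and destroying the desired geometric bound.
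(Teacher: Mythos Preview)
Your proof is correct and follows essentially the same route as the paper: apply Lemma~\ref{L.Alpha_Moments} to the recursion $W_n \stackrel{\mathcal{D}}{=} \sum_{i=1}^N C_i W_{n-1,i}$ to obtain a one-step inequality of the form $E[W_n^p] \leq \rho_p\,E[W_{n-1}^p] + K(\rho_{p-1}\vee\rho)^{(n-1)p/(p-1)}$, then iterate and sum the resulting geometric series. The only cosmetic differences are that the paper starts the induction at $p=2$ with an explicit second-moment computation (whereas you anchor at $p=1$ via $E[W_n]=\rho^n E[Q]$), and that you make the log-convexity step $E[C^{p-1}]\le\max\{E[C],E[C^p]\}$ explicit while the paper absorbs it silently into the bound $(\rho_{p-1}\vee\rho)\le(\rho_p\vee\rho)$; both versions lead to the same constant $K_p$.
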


\begin{proof}
Let $Y = C W_{n-1}$, where $C$ is independent of $W_{n-1}$ and let $\{Y_i\}$ be independent copies of $Y$. We will give an induction proof in $p$. For $p = 2$ we have
\begin{align*}
E[W_n^2] &= E\left[ \left( \sum_{i=1}^N Y_i \right)^2  \right] \\
&= E[N] E[Y^2] + E[N(N-1)] (E[Y])^2 \\
&= E[N] E[C^2] E[W_{n-1}^2] + E[N(N-1)] (E[C] E[W_{n-1}])^2 .
\end{align*}
Using the preceding recursion, letting $\rho = E[N] E[C]$, $\rho_2 = E[N] E[C^2]$, and noting that,
$$E[W_{n-1}] = \rho^{n-1} E[Q],$$
we obtain
\begin{equation} \label{eq:2_recur}
E[W_n^2] = \rho_2 E[W_{n-1}^2] + K \rho^{2(n-1)},
\end{equation}
where $K = E[N(N-1)] (E[C] E[Q])^2$. Now, iterating \eqref{eq:2_recur} gives
\begin{align*}
E[W_n^2] &= \rho_2 \left( \rho_2 E[W_{n-2}^2] +  K \rho^{2(n-2)} \right) +  K \rho^{2(n-1)} \\
&= \rho_2^{n-1} \left( \rho_2 E[W_{0}^2] + K  \right) + K \sum_{i=0}^{n-2} \rho_2^i \, \rho^{2(n-1-i)} \\
&= \rho_2^n E[Q^2] + K  \sum_{i=0}^{n-1} \rho_2^i \, \rho^{2(n-1-i)}  \\
&\leq (\rho_2 \vee \rho)^n E[Q^2] + K (\rho_2 \vee \rho)^n \sum_{i=0}^{n-1} (\rho_2 \vee \rho)^{n-2 - i  }  \\
&\leq \left( E[Q^2] + \frac{K}{\rho_2 \vee \rho} \sum_{j=0}^{\infty} (\rho_2 \vee \rho)^{j}  \right) (\rho_2 \vee \rho)^n \\
&= K_2 (\rho_2 \vee \rho)^n .
\end{align*}
Next, for any $p \in \{2, 3, \dots\}$ let $\rho_p = E[N] E[C^p]$. Suppose now that there exists a constant $K_{p-1} > 0$ such that
\begin{equation} \label{eq:Induction_p}
E[W_n^{p-1}] \leq K_{p-1} \left( \rho_{p-1} \vee \rho \right)^n
\end{equation}
for all $n \geq 0$. By Lemma \ref{L.Alpha_Moments} we have
\begin{align*}
E[W_n^p] &= \sum_{k=1}^\infty E\left[ \left( \sum_{i=1}^k Y_i \right)^p \right] P(N = k) \\
&\leq  \sum_{k=1}^\infty \left( k E\left[ Y^p \right] + k^p (E[ Y^{p-1}])^{p/(p-1)} \right) P(N = k) \\
&= E[N] E[ C^p] E[W_{n-1}^p] + E[N^p] (E[C^{p-1}])^{p/(p-1)} (E[W_{n-1}^{p-1}])^{p/(p-1)} \\
&\leq \rho_p E[W_{n-1}^p] + E[N^p] (E[C^{p-1}])^{p/(p-1)}  (K_{p-1})^{p/(p-1)} (\rho_{p-1} \vee \rho)^{(n-1)p/(p-1)},
\end{align*}
where the last inequality corresponds to the induction hypothesis. We then obtain the recursion
\begin{equation} \label{eq:p_recur}
E[W_n^p] \leq \rho_p E[W_{n-1}^p] + K (\rho_{p-1} \vee \rho)^{\frac{(n-1)p}{p-1}},
\end{equation}
where $K = E[N^p] (E[C^{p-1}])^{p/(p-1)}  (K_{p-1})^{p/(p-1)}$. Iterating \eqref{eq:p_recur} as for the case $p=2$ gives
\begin{align*}
E[W_n^p] &\leq \rho_p^n E[Q^p] + K  \sum_{i=0}^{n-1} \rho_p^i \, (\rho_{p-1} \vee \rho)^{\frac{(n-1-i)p}{p-1}} \\
&\leq (\rho_p \vee \rho)^n E[Q^p] + K \sum_{i=0}^{n-1} (\rho_p \vee \rho)^{\frac{(n-1)p -i}{p-1} } \\
&= (\rho_p \vee \rho)^n E[Q^p] + K (\rho_p \vee \rho)^n   \sum_{i=0}^{n-1} (\rho_p \vee \rho)^{\frac{ n- i - p }{p-1} }   \\
&\leq  \left( E[Q^p] + K (\rho_p \vee \rho)^{-1}   \sum_{j=0}^{\infty} (\rho_p \vee \rho)^{\frac{j}{p-1}} \right) (\rho_p \vee \rho)^n \\
&= K_p (\rho_p \vee \rho)^n.
\end{align*}
\qed\end{proof}

\bigskip

The proof for the general $\beta$-moment, $\beta > 1$, is given below. 

\begin{proof}[Proof of Lemma \ref{L.GeneralMoment}]
Set $p = \lceil \beta \rceil \geq \beta > 1$. Since the result when $p = \beta$ follows from Lemma \ref{L.IntegerMoment}, we assume that $p > \beta$. Let $Y = C W_{n-1}$, where $C$ is independent of $W_{n-1}$ and $\{Y_i\}$ are independent copies of $Y$. Also, recall that $\rho = E[N] E[C]$ and $\rho_\beta = E[N] E[C^\beta]$. Then, by Lemma \ref{L.Alpha_Moments}, 
\begin{align*}
E[W_n^\beta] &= E\left[  \left( \sum_{i=1}^N Y_i \right)^\beta \right] \\
&=  \sum_{k=1}^\infty E\left[   \left( \sum_{i=1}^k Y_i \right)^{\beta}   \right]  P(N = k) \\
&=  \sum_{k=1}^\infty \left( E\left[   \left( \sum_{i=1}^k Y_i \right)^{\beta} - \sum_{i=1}^k Y_i^\beta  \right] + E\left[ \sum_{i=1}^k Y_i^\beta \right] \right) P(N = k) \\
&\leq \sum_{k=1}^\infty \left( k^\beta E[Y^{p-1}]^{\beta/(p-1)} + k E\left[ Y^\beta \right]   \right) P(N = k) \\
&= E[N^\beta]  (E[C^{p-1}])^{\beta/(p-1)} (E[W_{n-1}^{p-1}])^{\beta/(p-1)} + \rho_\beta E[ W_{n-1}^\beta] .
\end{align*}
Then, by Lemma \ref{L.IntegerMoment}, 
\begin{align*}
E[W_n^\beta] &\leq  \rho_\beta E[ W_{n-1}^\beta] + E[N^\beta]  (E[C^{p-1}])^{\beta/(p-1)} (K_{p-1} (\rho_{p-1} \vee \rho)^{n-1})^{\beta/(p-1)} \\
&= \rho_\beta E[ W_{n-1}^\beta] + K (\rho_{p-1} \vee \rho)^{(n-1)\gamma},
\end{align*}
where $\gamma = \beta/(p-1) > 1$. Finally, iterating the preceding bound $n-1$ times gives
\begin{align*}
E[W_n^\beta] &\leq \rho_\beta^n E[W_0^\beta] + K \sum_{i=0}^{n-1} \rho_\beta^i (\rho \vee \rho_{p-1})^{\gamma(n-1-i)} \\
&\leq E[W_0^\beta]  (\rho \vee \rho_\beta)^n + K  \sum_{i=0}^{n-1} (\rho \vee \rho_\beta)^{\gamma(n-1-i) + i} \\
&= E[Q^\beta] (\rho \vee \rho_\beta)^n + K (\rho \vee \rho_\beta)^{n-1} \sum_{i=0}^{n-1} (\rho \vee \rho_\beta)^{(\gamma-1) i} \\
&\leq K_\beta (\rho \vee \rho_\beta)^n .
\end{align*}
This completes the proof. 
\qed\end{proof}

\subsection{The case when the $C$'s dominate: Implicit renewal theory} \label{SS.CDominates_Proofs}

In this section we state a lemma that is used in the proof of Theorem~\ref{T.Goldie} and we give the proofs to Theorem~\ref{T.Goldie} and Lemma~\ref{L.Max_Approx}.

\begin{lem} \label{L.Derivative}
Let $\alpha, \beta > 0$ and $H \geq 0$. Suppose $\int_0^t v^{\alpha+\beta-1} P(R > v) dv \sim H t^{\beta}/\beta$ as $t \to \infty$. Then,
$$P(R > t) \sim H t^{-\alpha}, \qquad t \to \infty.$$
\end{lem}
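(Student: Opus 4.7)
\textbf{Plan for the proof of Lemma \ref{L.Derivative}.} This is a monotone density theorem in disguise: we are told that the integral $F(t) := \int_0^t v^{\alpha+\beta-1} P(R>v)\,dv$ is asymptotic to $H t^\beta/\beta$, and we wish to extract the pointwise asymptotic of $P(R>v)$. The plan is to exploit the monotonicity of $t \mapsto P(R>t)$ to sandwich the increments of $F$ between upper and lower Riemann-type bounds, and then to let a scale parameter tend to $1$ to pinch the limit.

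Concretely, fix $\lambda>1$ and consider the increment
\[
F(\lambda t)-F(t) \;=\; \int_t^{\lambda t} v^{\alpha+\beta-1} P(R>v)\,dv.
\]
By the hypothesis the left-hand side satisfies $F(\lambda t)-F(t)\sim H(\lambda^\beta-1)t^\beta/\beta$ as $t\to\infty$. Since $P(R>\cdot)$ is non-increasing, for $v\in[t,\lambda t]$ we have $P(R>\lambda t)\le P(R>v)\le P(R>t)$, and the bare integral evaluates to $\int_t^{\lambda t} v^{\alpha+\beta-1}dv = (\lambda^{\alpha+\beta}-1)t^{\alpha+\beta}/(\alpha+\beta)$. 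This yields
\[
P(R>\lambda t)\,\frac{(\lambda^{\alpha+\beta}-1)t^{\alpha+\beta}}{\alpha+\beta} \;\le\; F(\lambda t)-F(t) \;\le\; P(R>t)\,\frac{(\lambda^{\alpha+\beta}-1)t^{\alpha+\beta}}{\alpha+\beta}.
\]
Multiplying by $t^{-\beta}$ and passing to the limit, the right-hand inequality gives
\[
\liminf_{t\to\infty} t^{\alpha}P(R>t) \;\ge\; \frac{(\alpha+\beta)\,H(\lambda^\beta-1)}{\beta(\lambda^{\alpha+\beta}-1)},
\]
while the left-hand one, after the substitution $s=\lambda t$, gives
\[
\limsup_{s\to\infty} s^{\alpha} P(R>s) \;\le\; \frac{\lambda^{\alpha}(\alpha+\beta)\,H(\lambda^\beta-1)}{\beta(\lambda^{\alpha+\beta}-1)}.
\]

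Now let $\lambda\downarrow 1$. Using $\lambda^\beta-1 = \beta(\lambda-1)+o(\lambda-1)$ and $\lambda^{\alpha+\beta}-1=(\alpha+\beta)(\lambda-1)+o(\lambda-1)$, the common limit of both bounding constants is $H$, so $t^\alpha P(R>t)\to H$, which is exactly $P(R>t)\sim H t^{-\alpha}$ (with the case $H=0$ interpreted as $P(R>t)=o(t^{-\alpha})$). There is no real obstacle here since both monotonicity and the hypothesised asymptotic are handed to us; the only mild care needed is to make sure the ratio $(\lambda^\beta-1)/(\lambda^{\alpha+\beta}-1)$ is handled correctly as $\lambda\downarrow 1$, and to note that the upper bound must be applied after a change of variable so that both bounds refer to $P(R>t)t^\alpha$ at the same argument before the squeeze closes.
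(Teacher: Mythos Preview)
Your proof is correct and follows essentially the same approach as the paper's: both sandwich the increment $\int_t^{\lambda t} v^{\alpha+\beta-1}P(R>v)\,dv$ using the monotonicity of $P(R>\cdot)$, invoke the hypothesis to evaluate the increment asymptotically, and then let the scale parameter tend to $1$. The only cosmetic difference is that the paper handles the $\liminf$ and $\limsup$ directions with separate intervals ($[t,bt]$ with $b>1$ and $b<1$ respectively), whereas you extract both from the same interval by using each side of the sandwich and a change of variable for the upper bound.
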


\begin{proof}
This lemma is a special case of the Monotone Density Theorem, see Theorem 1.7.5 (also Exercise~1.11.14) in \cite{BiGoTe1987}. However, for completeness, we give a direct proof here, similar to the one of Lemma 9.3 in \cite{Goldie_91}. By assumption, for any $b > 1$, $\epsilon \in (0,1)$, and $t$ sufficiently large,
\begin{align*}
P(R > t) t^{\alpha +\beta} \cdot \frac{b^{\alpha+\beta}-1}{\alpha+\beta} &\geq \int_{t}^{b t} v^{\alpha+\beta-1} P(R > v) \, dv \geq \frac{(H-\epsilon)}{\beta} (b t)^\beta - \frac{(H+\epsilon)}{\beta} t^\beta  \\
&\geq \frac{t^\beta}{\beta} \left( H (b^\beta-1) -\epsilon (1 + b^\beta)  \right).
\end{align*}
Since $\epsilon$ was arbitrary, we can take the limit as $\epsilon \to 0$ and obtain
$$\liminf_{t \to \infty} P(R > t) t^{\alpha} \geq \frac{H (\alpha+\beta) (b^\beta-1)}{\beta(b^{\alpha+\beta} - 1)} \to H, \qquad b \downarrow 1.$$
Similarly, one can prove that $\limsup_{t \to \infty} P(R > t) t^\alpha \leq H$ starting from $\int_{bt}^t v^{\alpha+\beta -1} P(R > v) \, dv$ with $0 < b < 1$. 
\qed\end{proof}

\begin{proof}[Proof of Theorem \ref{T.Goldie}]
For any $k \in \mathbb{N}$ define $\Pi_k = \prod_{i=1}^k C_i$ and $V_k = \sum_{i=1}^k \log C_i$, with $\Pi_0 = 1$ and $V_0 = 0$, where the $C_i$'s are independent copies of $C$. Then, for any $t \in \mathbb{R}$,
\begin{align*}
P(R > e^t) &= \sum_{k=1}^n \left( m^{k-1} P(\Pi_{k-1} R > e^t) - m^k P(\Pi_k R > e^t) \right) + m^n P( \Pi_n R > e^t) \\
&= \sum_{k=1}^n  \left( m^{k-1} P(e^{V_{k-1}} R > e^t) - m^k P( e^{V_{k-1}} C_k R > e^t) \right) + m^n P( e^{V_n} R > e^t) \\
&= \sum_{k=0}^{n-1} m^{k} \int_{-\infty}^\infty \left( P( R > e^{t-v}) - mP( C R > e^{t-v}) \right) P(V_{k} \in dv) + m^n P( e^{V_n} R > e^t).
\end{align*}
Next, define
$$\nu_n(dt) = e^{\alpha t} \sum_{k=0}^n m^{k} P(V_k \in dt), \qquad g(t) = e^{\alpha t} (P(R > e^t) - m P(CR > e^t)),$$
$$r(t) = e^{\alpha t} P(R > e^t)  \qquad \text{and} \qquad \delta_n(t) = m^n P( e^{V_n} R > e^t).$$
Then, for any $t \in \mathbb{R}$ and $n \in \mathbb{N}$, 
$$r(t) = (g*\nu_{n-1})(t) + \delta_n(t).$$
Next, for any $\beta > 0$, define the smoothing operator
$$\breve{f}(t) = \int_{-\infty}^t e^{-\beta(t-u)} f(u) \, du$$
and note that
\begin{align}
\breve{r}(t) &= \int_{-\infty}^t e^{-\beta(t-u)} (g*\nu_{n-1})(u) \, du + \breve{\delta}_n (t) \notag \\
&= \int_{-\infty}^t e^{-\beta(t-u)} \int_{-\infty}^\infty g(u-v) \nu_{n-1}(dv) \, du + \breve{\delta}_n (t) \notag \\
&= \int_{-\infty}^\infty \int_{-\infty}^t e^{-\beta(t-u)} g(u-v) \, du \, \nu_{n-1}(dv) + \breve{\delta}_n (t) \notag \\
&= \int_{-\infty}^\infty \breve{g}(t-v) \, \nu_{n-1}(dv) + \breve{\delta}_n (t) \notag \\
&= (\breve{g}* \nu_{n-1})(t) + \breve{\delta}_n(t) . \label{eq:SmoothOperator}
\end{align}

Next, we will show that one can pass $n \to \infty$ in the preceding identity. To this end, let $\eta(du) = e^{\alpha u} m P(\log C \in du)$, and note that this measure places no mass at $-\infty$. Also, by assumption, $\eta(\cdot)$ is a nonarithmetic measure on $\mathbb{R}$. Moreover,
$$\int_{-\infty}^\infty \eta(du) = m E[ e^{\alpha \log C}] = m E[ C^\alpha] = 1$$
and
$$\int_{-\infty}^\infty u\, \eta(du) = m E[ e^{\alpha \log C} \log C] = m E[C^\alpha \log C] = m\mu$$
imply that $\eta(\cdot)$ is a probability measure with mean $0 < m\mu < \infty$. Furthermore,
$$\nu(dt) = \sum_{k=0}^\infty m^k e^{\alpha t} P(V_k \in dt)$$
is its renewal measure since $\nu(dt) = \sum_{n=0}^\infty \eta^{*n}(dt)$. Since $m\mu > 0$, then $(|f|*\nu)(t) < \infty$ for all $t$ whenever $f$ is directly Riemann integrable. From \eqref{eq:Goldie_condition} we know that $g \in L^1$, so by Lemma 9.2 from \cite{Goldie_91}, $\breve{g}$ is directly Riemann integrable, resulting in $(|\breve{g}|*\nu)(t) < \infty$ for all $t$. Thus $(|\breve{g}|*\nu)(t) = E\left[ \sum_{k=0}^\infty m^k e^{\alpha V_k} | \breve{g}(t - V_k) | \right] < \infty$. By Fubini's Theorem, 
$E\left[ \sum_{k=0}^\infty m^k e^{\alpha V_k}  \breve{g}(t - V_k)  \right] $ exists and 
$$(\breve{g}*\nu)(t) = E\left[ \sum_{k=0}^\infty m^k e^{\alpha V_k}  \breve{g}(t - V_k)  \right]  = \sum_{k=0}^\infty E\left[  m^k e^{\alpha V_k}  \breve{g}(t - V_k) \right] = \lim_{n\to \infty}  (\breve{g}*\nu_n)(t).$$
Now, by assumption, we can choose $\beta$ in the definition of the smoothing operator such that $0 < \beta < \alpha$ and $m E[C^\beta] < 1$. We show below that for such $\beta$ we have $\breve{\delta}_n(t) \to 0$ as $n \to \infty$ for all fixed $t$, since
\begin{align*}
\breve{\delta}_n(t) &= \int_{-\infty}^t e^{-\beta(t-u)} m^n P(e^{\beta V_n} R^\beta > e^{\beta u}) \, du \\
&= \frac{e^{-\beta t} m^n}{\beta} \int_0^{e^{\beta t}} P( e^{\beta V_n} R^\beta > v) \, dv \ \\
&\leq \frac{e^{-\beta t}}{\beta} E[R^\beta] (m E[C^\beta])^n  \to 0 
\end{align*}
as $n \to \infty$. Hence, the preceding arguments allow us to pass $n \to \infty$ in \eqref{eq:SmoothOperator}, and obtain
$$\breve{r}(t) = (\breve{g}*\nu)(t).$$

Now, by the key renewal theorem for two-sided random walks in \cite{Ath_McD_Ney_78},
$$e^{-\beta t} \int_{0}^{e^t} v^{\alpha+\beta-1} P(R > v) \, dv = \breve{r}(t) \to \frac{1}{m\mu} \int_{-\infty}^\infty \breve{g}(u) \, du \triangleq \frac{H}{\beta}, \qquad t \to \infty.$$
Clearly, $H \geq 0$ since the left-hand side of the preceding equation is positive, and thus, by Lemma \ref{L.Derivative} 
$$P(R > t) \sim H t^{-\alpha}, \qquad t \to \infty.$$
Finally, 
\begin{align*}
H &=  \frac{\beta}{m\mu} \int_{-\infty}^\infty \int_{-\infty}^u e^{-\beta (u-t)} g(t) \, dt \, du \\
&= \frac{1}{m\mu} \int_{-\infty}^\infty  g(t) \, dt \\
&= \frac{1}{m\mu} \int_{0}^\infty v^{\alpha -1} (P(R > v) - mP(C R > v)) \, dv.
\end{align*}
\qed\end{proof}

\bigskip

We end this section with the proof of Lemma \ref{L.Max_Approx}.

\begin{proof}[Proof of Lemma \ref{L.Max_Approx}]
That the integral is positive follows from the union bound. That 
$$\int_0^\infty \left( E[N] P(CR > t) - P\left( \max_{1\leq i\leq N} C_i R_i > t \right) \right) t^{\alpha-1} dt = \frac{1}{\alpha} E\left[ \sum_{i=1}^N (C_iR_i)^\alpha - \left( \max_{1\leq i\leq N} C_iR_i \right)^\alpha \right]$$
follows from similar arguments to those used to derive the alternative expression for $H$ in the proof of Theorem \ref{T.GoldieApplication}. The rest of the proof shows that the integral is finite.

Clearly
\begin{align*}
\int_{0}^{1} \left( E[N] P(C R > t) - P\left( \max_{1\leq i \leq N} C_i R_i > t \right) \right)  t^{\alpha -1} \, dt &\leq E[N] \int_0^{1} t^{\alpha-1} \, dt < \infty.
\end{align*}
Hence, it remains to prove that the remaining part of the integral $\left( \int_{1}^\infty \cdots dt \right)$ is finite. To do this, we start by letting $Y = CR$ and $F(y) = P(Y \leq y)$. Then
\begin{align*}
E[N] P(CR > t) - P\left( \max_{1\leq i \leq N} C_i R_i > t \right)&= \sum_{k=1}^\infty \left( F(t)^k -1 +  k \overline{F}(t)  \right) P(N = k)  \\
&= E\left[ (1-\overline{F}(t))^N  - 1 + N \overline{F}(t) \right] .
\end{align*}
Use the inequality $1 - x \leq e^{-x}$ for $x \geq 0$ to obtain
$$E\left[ (1-\overline{F}(t))^N  - 1 + N \overline{F}(t) \right] \leq E\left[ e^{-\overline{F}(t) N}  - 1 + N \overline{F}(t) \right].$$
Choose $0 < \delta < \alpha\epsilon/(1+\epsilon)$ (recall that $0 < \epsilon < 1$) and let $\beta = \alpha-\delta$. By Markov's inequality and Lemma \ref{L.Stability} 
$$\overline{F}(t) \leq t^{-\beta} E[ Y^\beta] = t^{-\beta} E[R^\beta] E[C^\beta] \triangleq c t^{-\beta} < \infty$$
for any $t > 0$. Note that the function $h(x) = e^{-x} - 1 + x$ is increasing on $[0,\infty)$, so $h(N \overline{F}(t)) \leq h(cN t^{-\beta})$. Thus, by Fubini's Theorem (the integrand is nonnegative),
$$\int_{1}^\infty \left( E[N] P(C R > t) - P\left( \max_{1\leq i \leq N} C_i R_i > t \right) \right)  t^{\alpha -1} \, dt  \leq E\left[ \int_{1}^\infty \left( e^{-cN t^{-\beta}}  - 1 + cN t^{-\beta} \right)  t^{\alpha -1} \, dt   \right].$$ 
Using the change of variables $u = c N t^{-\beta}$ gives
\begin{align*}
\int_{1}^\infty \left( e^{-cN t^{-\beta}}  - 1 + c N t^{-\beta} \right)  t^{\alpha -1} \, dt &= \frac{(cN)^{\alpha/\beta}}{\beta} \int_0^{cN} \left( e^{-u} - 1 + u \right) u^{-\alpha/\beta  -1} \, du \\
&\leq  \frac{(cN)^{\alpha/\beta}}{\beta} \int_0^\infty \left( e^{-u} - 1 + u \right) u^{-\alpha/\beta  -1} \, du.
\end{align*}
Our choice of $\beta = \alpha-\delta$ guarantees that $1 < \alpha/\beta < 1+\epsilon$, so $E[(cN)^{\alpha/\beta}] < \infty$. It only remains to show that the last (non-random) integral is finite. To see this note that $e^{-x} -1 + x \leq x^2/2$ and $e^{-x} - 1 \leq 0$ for any $x \geq 0$, so
\begin{align*}
\int_0^\infty \left( e^{-u} - 1 + u \right) u^{-\alpha/\beta  -1} \, du &\leq \frac{1}{2} \int_0^1  u^{1-\alpha/\beta} \, du + \int_1^\infty  u^{-\alpha/\beta } \, du \\
&= \frac{1}{2(2-\alpha/\beta)} + \frac{1}{\alpha/\beta-1} < \infty.
\end{align*}
This completes the proof. 
\qed\end{proof}

\subsection{The case when $N$ dominates} \label{SS.NDominates_Proofs}

This section contains the proofs of Lemma \ref{L.Finite_n} and Proposition \ref{P.UniformBound}; the proof of Lemma \ref{L.W_n_Finite_n} is omitted since it is basically the same as that of Lemma \ref{L.Finite_n}.  We also present in Lemma \ref{L.TruncBound} a result for sums of iid truncated random variables that may be of independent interest in the context of heavy-tailed asymptotics, since it provides bounds that do not depend on the distribution of the summands. Most of the work involved in the proof of Proposition \ref{P.UniformBound} goes into obtaining a bound for one iteration of the recursion satisfied by $W_n$, and for the convenience of the reader it is presented separately in Lemma \ref{L.Bound1Iter}.

\begin{proof}[Proof of Lemma \ref{L.Finite_n}]
We proceed by induction in $n$. For $n = 1$ fix $\alpha/(\alpha+\epsilon) < \delta < 1$ and note that
\begin{align*}
P(R^{(1)} > x) &= P\left( \sum_{i=1}^{N^{(0)}} C_i^{(1)} R_{i}^{(0)} + Q^{(0)} > x \right) \\
&= P\left( \sum_{i=1}^N C_i Q_i > x - Q, \, Q \leq x^\delta \right) + P\left( Q >  x^\delta \right) \\
&\sim P\left( \sum_{i=1}^N C_i Q_i > x \right) + O\left( x^{-\delta(\alpha+\epsilon)} \right) \\
&\sim P( N > x/ E[CQ] ) + o(P(N > x))  \\
&\sim (E[C] E[Q])^\alpha P(N > x),
\end{align*}
where $N, \{C_i\},$ and $Q$ are independent and the fourth step is justified by Lemma 3.7(2) from \cite{Jess_Miko_06}. Now suppose that we have
$$P(R^{(n)} > x) \sim \frac{(E[C]E[Q])^\alpha}{(1-\rho)^\alpha} \sum_{k=0}^n \rho_\alpha^k (1-\rho^{n-k})^\alpha P(N > x).$$
Note that since $E[C^{\alpha + \epsilon}] < \infty$, then by Lemma 4.2 from \cite{Jess_Miko_06}, for $C$ independent of $R^{(n)}$, 
$$P(C R^{(n)} > x) \sim E[C^\alpha] P(R^{(n)} > x).$$
Let $c^{-1} = E[C^\alpha] (E[C]E[Q])^\alpha (1-\rho)^{-\alpha} \sum_{k=0}^n \rho_\alpha^k (1-\rho^{n-k})^\alpha$, then 
$$P(N > x) \sim c P(C R^{(n)} > x),$$
and by Lemma 3.7(5) from \cite{Jess_Miko_06} we have
\begin{align*}
P(R^{(n+1)} > x) &= P\left( \sum_{i=1}^{N^{(0)}} C_i^{(1)} R_{i}^{(n)} + Q^{(0)} > x \right) \\
&\sim P\left( \sum_{i=1}^{N^{(0)}} C_i^{(1)} R_{i}^{(n)} > x \right) \\
&\sim (E[N] + c (E[C R^{(n)}])^\alpha) P( C R^{(n)} > x) \\
&\sim (E[N] + c (E[C R^{(n)}])^\alpha) c^{-1} P( N > x) .
\end{align*}
Next, observing that $E[R^{(n)}] = \sum_{i=0}^n E[W_i] = E[Q] \sum_{i=0}^n \rho^i = E[Q] (1-\rho^{n+1})/(1-\rho)$, we obtain
\begin{align*}
(E[N] + c (E[C R^{(n)}])^\alpha) c^{-1} &= \left(\rho_\alpha+ \frac{ E[R^{(n)}]^\alpha (1-\rho)^\alpha}{ E[Q]^\alpha \sum_{k=0}^n \rho_\alpha^k (1-\rho^{n-k})^\alpha} \right) \frac{(E[C]E[Q])^\alpha}{(1-\rho)^\alpha} \sum_{k=0}^n \rho_\alpha^k (1-\rho^{n-k})^\alpha \\
&= \left(\rho_\alpha \sum_{k=0}^n \rho_\alpha^k (1-\rho^{n-k})^\alpha + (1-\rho^{n+1})^\alpha  \right) \frac{(E[C]E[Q])^\alpha}{(1-\rho)^\alpha} \\
&= \frac{(E[C]E[Q])^\alpha}{(1-\rho)^\alpha}  \sum_{k=0}^{n+1} \rho_\alpha^k (1-\rho^{n+1-k})^\alpha.
\end{align*}
This completes the proof. 
\qed\end{proof}

Lemma \ref{L.TruncBound} below is based on traditional heavy-tailed techniques based on Chernoff's inequality for truncated random variables, such as those used in \cite{Nag82} and \cite{Bor00}, to name some references. The reason why we cannot simply use existing results is our need to guarantee that the bounds do not depend on the distribution of the summands, which will be key when we apply them to $W_n$. Hence special care goes into accounting for the constants explicitly. The corollary that we obtain from this lemma will be used in the proof of Lemma~ \ref{L.Bound1Iter}. 

\begin{lem} \label{L.TruncBound}
Suppose that $Y_1, Y_2, \dots$ are nonnegative iid random variables with the same distribution as $Y$, where $E[Y^\beta] < \infty$ for some $\beta > 0$.  Fix $0 < \epsilon < 1$. Then,
\begin{enumerate}
\item for $0 < \beta < 1$, $1 \leq k \leq x^\beta/ E[Y^\beta]$, and $x \geq e^{(Ke)^{1/(1-\beta)}}$, 
$$P\left( \sum_{i=1}^k Y_i  > x, \, \max_{1\leq i \leq k} Y_i \leq x/\log x \right) \leq  e^{-(1-\beta)(\log x)(\log\log x) \left( 1 - \frac{\log (eK)}{(1-\beta)\log\log x} \right) },$$
\item for $\beta > 1$, $1 \leq k \leq (1-\epsilon) x/(E[Y] \vee E[Y^\beta])$, and $x \geq e \vee (Ke/\epsilon)^{2/(\beta-1)}$, 
$$P\left( \sum_{i=1}^k Y_i  > x, \, \max_{1\leq i \leq k} Y_i \leq  x/\log x \right) \leq  e^{-\epsilon (\beta-1) (\log x)^2 \left( 1 - \frac{\log\log x}{\log x} - \frac{\log(Ke/\epsilon)}{(\beta-1)\log x} \right) + e^5 (\beta-1)^2},$$
\end{enumerate}
where $K = K(\beta) > 1$ is a constant that does not depend on $\epsilon$, $k$ or the distribution of $Y$.  
\end{lem}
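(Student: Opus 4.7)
My strategy is a Chernoff exponential bound tailored so that every factor on the right-hand side depends on the distribution of $Y$ only through the hypothesis on $k$; all remaining quantities are functions of $\beta$, $\epsilon$, and $x$. Write $S_k = \sum_{i=1}^k Y_i$, $M := x/\log x$, and $\phi(\theta) := E[e^{\theta Y}\mathbb{1}(Y \leq M)]$. By the iid assumption and Markov's inequality, for every $\theta > 0$,
$$P\Bigl(S_k > x, \ \max_{1\leq i\leq k} Y_i \leq M\Bigr) \leq e^{-\theta x}\,\phi(\theta)^k.$$
The rest of the argument is a careful control of $\phi(\theta)^k$ and an explicit choice of $\theta$.

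The next step is to bound $\phi$ uniformly in the law of $Y$, using only $E[Y^\beta]$. For $Y \leq M$ and any $j$ with $j \geq \beta$ one has $Y^j \leq Y^\beta M^{j-\beta}$, so $E[Y^j \mathbb{1}(Y\leq M)] \leq M^{j-\beta} E[Y^\beta]$. When $\beta < 1$ this applies for every $j \geq 1$ and summing the exponential series gives $\phi(\theta) \leq \exp\{E[Y^\beta] M^{-\beta}(e^{\theta M}-1)\}$. When $\beta > 1$, I instead treat $j=1$ separately via the crude bound $E[Y\mathbb{1}(Y\leq M)] \leq E[Y]$, obtaining $\phi(\theta) \leq \exp\{\theta E[Y] + E[Y^\beta]M^{-\beta}(e^{\theta M}-1-\theta M)\}$. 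Substituting into the Chernoff inequality and invoking the hypotheses on $k$ (which bound $k E[Y^\beta] M^{-\beta}$ and $k E[Y]$ explicitly in terms of $x$ and $\epsilon$), the distributional quantities vanish from the final exponent.

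Setting $u := \theta M$, so that $\theta x = u \log x$, the exponent reduces to $\Psi_1(u) := -u\log x + (\log x)^\beta(e^u-1)$ in case (i), and to $\Psi_2(u) := -\epsilon u\log x + (1-\epsilon)x^{1-\beta}(\log x)^\beta(e^u - 1 - u)$ in case (ii). I then choose $u$ to make each of these small. For $\Psi_1$ the selection $u = (1-\beta)\log\log x - \log K$ yields $(\log x)^\beta e^u \leq (\log x)/K$ and $-u\log x = -(1-\beta)(\log x)(\log\log x) + (\log K)(\log x)$, which collapses to the stated bound after factoring out $-(1-\beta)(\log x)(\log\log x)$. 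For $\Psi_2$ I take $u$ close to $(\beta-1)\log x - (\beta-1)\log\log x + \log(\epsilon/(1-\epsilon))$; this balances the two contributions and gives a leading term $-\epsilon(\beta-1)(\log x)^2$ together with a correction $\epsilon(\beta-1)(\log x)(\log\log x) + O(\log x)$ of the required shape. The explicit lower bounds on $x$ in the statement are precisely those needed to guarantee $u > 0$ and to justify the simplifications (e.g.\ $e^u-1 \leq e^u$, $e^u-1-u \leq e^u$) with constants that depend only on $\beta$ and $\epsilon$.

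The main obstacle is purely bookkeeping: every multiplicative constant that appears must depend only on $\beta$ and $\epsilon$, never on the law of $Y$. This is why I bound $\phi$ through $E[Y^\beta]$-dependent terms in step~2 rather than via any sharper distribution-specific estimate, and it is why case (ii) only yields the factor $\epsilon(\beta-1)$ and not $(\beta-1)$: the missing $1-\epsilon$ is the slack used to absorb the linear-in-$\theta$ contribution $k\theta E[Y]$ into $-\theta x$. The residual additive term $e^5(\beta-1)^2$ in case (ii) absorbs the bounded error between the exact minimizer of $\Psi_2$ and the cleaner near-optimal $u$ above, together with the error from replacing $e^u-1-u$ by $e^u$; tracking this error and matching the stated form requires some care, but is a routine calculation once the framework is in place.
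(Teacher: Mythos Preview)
Your overall architecture---Chernoff bound on the truncated sum, then a distribution-free upper bound on the truncated MGF using only $E[Y^\beta]$, then an explicit choice of $\theta$---is exactly the paper's. The case $0<\beta<1$ is fine as you wrote it, and for $1<\beta\le 2$ your Taylor-series bound $\phi(\theta)\le \exp\{\theta E[Y]+E[Y^\beta]M^{-\beta}(e^{\theta M}-1-\theta M)\}$ is valid and gives the stated conclusion.

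The gap is at $\beta>2$. Your claim that $E[Y^j\,\mathbb{1}(Y\le M)]\le M^{j-\beta}E[Y^\beta]$ holds only for $j\ge\beta$; for the intermediate integers $2\le j<\beta$ it is false (take $Y\equiv 1$ and $M$ large: the left side is $1$, the right side is $M^{j-\beta}\to 0$). So for $\beta>2$ the series bound on $\phi(\theta)$ breaks down, and your expression for $\Psi_2$ is not justified. This is not a cosmetic issue: it is precisely the origin of the additive $e^5(\beta-1)^2$ in the statement, which you misattribute to ``optimization error.'' In the paper's argument, for $\beta>2$ an extra quadratic term $\tfrac{e}{2}\theta^2 E[Y^2]$ survives in the exponent; one then uses the log-convexity of moments, which gives $E[Y^2]\le E[Y]\vee E[Y^\beta]$, so that $k\,E[Y^2]\le (1-\epsilon)x$ remains distribution-free. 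With the chosen $\theta$ this contributes at most a bounded quantity, namely $\sup_{x\ge e} e(\beta-1)^2(\log x)^4/x \le e^5(\beta-1)^2$.

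To repair your proof, split the Taylor series one step further when $\beta>2$: keep $j=0,1$ as you do, bound the $j=2$ term by $\tfrac{1}{2}\theta^2 E[Y^2]$ (or more generally the terms $2\le j\le \lceil\beta\rceil-1$ via $E[Y^j]\le E[Y]\vee E[Y^\beta]$ and $Y^j\le Y^2 M^{j-2}$), and use your $M^{j-\beta}E[Y^\beta]$ bound only for $j\ge\lceil\beta\rceil$. The resulting extra term $k\theta^2 E[Y^2]$ is what produces the $e^5(\beta-1)^2$ constant; the rest of your optimization then goes through. The paper achieves the same effect via integration by parts on $E[e^{\theta Y}\mathbb{1}(Y\le y)]$ followed by Markov's inequality on the tail, which automatically isolates the second-moment contribution when $\beta>2$.
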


\begin{proof}
Let $F(t) = P(Y \leq t)$, set $y = x/\log x$ and note that
\begin{align*}
P\left( \sum_{i=1}^k Y_i > x, \max_{1\leq i \leq k} Y_i \leq y \right) =  P\left( \sum_{i=1}^k Y_i^{(y)} > x \right) F(y)^k ,
\end{align*}
where $P( Y^{(y)} \leq t) = F(t \wedge y)/F(y)$. Fix $\theta \geq 1/y$ and use the standard Chernoff's bound method for truncated heavy tailed sums (see, e.g. \cite{Nag82,Bor00}) to obtain
\begin{align*}
P\left( \sum_{i=1}^k Y_i^{(y)} > x \right) &\leq e^{-\theta x} E\left[ e^{\theta Y_1^{(y)}} \right]^k = e^{-\theta x} E \left[ e^{\theta Y} 1_{(Y \leq y)} \right]^k F(y)^{-k}.
\end{align*}
From where it follows that
$$P\left( \sum_{i=1}^k Y_i^{(y)} > x \right) F(y)^k \leq e^{-\theta x} E \left[ e^{\theta Y} 1_{(Y \leq y)} \right]^k.$$
To analyze the preceding truncated exponential moment suppose first that $\beta > 1$. Then, by using the identity
\begin{equation} \label{eq:EtaMoment}
E[Y^\eta] = \int_0^\infty \eta t^{\eta-1} \overline{F}(t) dt
\end{equation}
we obtain
\begin{align}
E \left[ e^{\theta Y} 1_{(Y \leq y)} \right] &= \overline{F}(0) - e^{\theta y} \overline{F}(y) + \theta \int_0^y e^{\theta t} \overline{F}(t) \, dt \notag \\
&\leq 1 + \theta \int_0^{1/\theta} \overline{F}(t) \, dt + \theta \int_0^{1/\theta} (e^{\theta t} - 1) \overline{F}(t) \, dt +  \theta \int_{1/\theta}^y e^{\theta t} \overline{F}(t) \, dt \notag \\
&\leq 1 + \theta E[Y] + e \theta^2 \int_0^{1/\theta} t \overline{F}(t) \, dt +  \theta \int_{1/\theta}^y e^{\theta t} \overline{F}(t) \, dt \notag \\
&\leq 1 + \theta E[Y] + \frac{e \theta^{ 2\wedge \beta}}{2 \wedge \beta} E[Y^{2\wedge \beta}] +  \theta \int_{1/\theta}^y e^{\theta t} \overline{F}(t) \, dt ,  \label{eq:SecondOrd}
\end{align}
where in the second inequality we use $e^x - 1 \leq x e^x$, $x \geq 0$, and in the last inequality we use $t^{2 - (2\wedge \beta)} \leq \theta^{-2 + (2\wedge \beta)}$ and \eqref{eq:EtaMoment} with $\eta = 2 \wedge \beta$.  Similarly, if $0 < \beta \leq 1$, then
\begin{align}
E \left[ e^{\theta Y} 1_{(Y \leq y)} \right] &\leq 1 + \theta \int_0^{1/\theta} e^{\theta t} \overline{F}(t) \, dt + \theta \int_{1/\theta}^y e^{\theta t} \overline{F}(t) \, dt \notag \\
&\leq 1 +  \frac{e \theta^{\beta}}{\beta} E[Y^\beta] +  \theta \int_{1/\theta}^y e^{\theta t} \overline{F}(t) \, dt . \label{eq:FirstOrd}
\end{align}
Next, by Markov's inequality we have
$$\overline{F}(t) \leq E[Y^\beta] t^{-\beta},$$
which, in combination with \eqref{eq:SecondOrd} and \eqref{eq:FirstOrd}, gives 
\begin{equation}  \label{eq:Cases}
E \left[ e^{\theta Y} 1_{(Y \leq y)} \right] \leq \begin{cases}
1 + \theta E[Y] + \frac{e \theta^{2}}{2} E[Y^{2}] + E[Y^\beta] \theta \int_{1/\theta}^y e^{\theta t} t^{-\beta} dt , & \beta > 2, \\
1 + \theta E[Y] + \frac{e \theta^\beta}{\beta} E[Y^\beta] + E[Y^\beta] \theta \int_{1/\theta}^y e^{\theta t} t^{-\beta} dt , & 1 < \beta \leq 2, \\
1 + \frac{e \theta^\beta}{\beta} E[Y^\beta] +   E[Y^\beta] \theta \int_{1/\theta}^y e^{\theta t} t^{-\beta} dt, & 0 < \beta \leq 1.
\end{cases} 
\end{equation}
To analyze the remaining integral we split it as follows, for $\beta > 0$, 
\begin{align*}
\theta \int_{1/\theta}^y e^{\theta t} t^{-\beta} dt &\leq \theta^{1+\beta} \int_{1/\theta}^{y/2} e^{\theta t} dt + \theta \int_{y/2}^y e^{\theta t} t^{-\beta} \, dt \\
&\leq \theta^\beta e^{\theta y/2} + \theta y^{1-\beta} \int_{1/2}^1 e^{\theta y u} u^{-\beta} \, du \\
&\leq \theta^\beta e^{\theta y/2} + \theta y^{1-\beta} 2^{\beta} \int_{1/2}^1 e^{\theta y u} \, du \\
&\leq \theta^\beta e^{\theta y/2} +  2^{\beta} e^{\theta y}  y^{-\beta} ,
\end{align*}
from where it follows that 
\begin{align*}
&2e \theta^\beta E[Y^\beta] + E[Y^\beta] \theta \int_{1/\theta}^y e^{\theta t} t^{-\beta} dt  \\
&\leq 2e \theta^\beta E[Y^\beta] + E[Y^\beta] \theta^\beta e^{\theta y/2} + E[Y^\beta]  2^{\beta} e^{\theta y}  y^{-\beta} \\
&\leq 2^{\beta} E[Y^\beta] e^{\theta y}  y^{-\beta} \left( 1 + e 2^{1-\beta} (\theta y)^\beta e^{-\theta y} + 2^{-\beta} (\theta y)^\beta e^{-\theta y/2}  \right) \\
&\leq 2^{\beta} E[Y^\beta] e^{\theta y}  y^{-\beta} \left( 1 + 2e \sup_{t \geq 1} t^\beta e^{-t} + \sup_{t \geq 1/2} t^{\beta} e^{-t}  \right).
\end{align*}
Hence, we have shown that
$$2e \theta^\beta E[Y^\beta] + E[Y^\beta] \theta \int_{1/\theta}^y e^{\theta t} t^{-\beta} dt \leq K E[Y^\beta] e^{\theta y} y^{-\beta},$$
where $K = 2^\beta  \left(1 + (2e+1) \sup_{t \geq 1/2} t^\beta e^{-t} \right)$ does not depend on $\theta$ or the distribution of $Y$. Replacing the preceding inequality in \eqref{eq:Cases} and using $1+t \leq e^t$ give,
\begin{equation} \label{eq:TripleBound}
e^{-\theta x} E\left[ e^{\theta Y} 1_{(Y \leq y)} \right]^k \leq \begin{cases}
e^{-\theta (x - k E[Y]) + e k \theta^2 E[Y^2] + K k E[Y^\beta] e^{\theta y} y^{-\beta} } , & \beta > 2, \\
e^{-\theta (x - k E[Y])  + K k E[Y^\beta] e^{\theta y} y^{-\beta} } , & 1 < \beta \leq 2, \\
e^{-\theta x + K k E[Y^\beta] e^{\theta y} y^{-\beta}}, & 0 < \beta \leq 1. 
\end{cases}
\end{equation}
Now, to complete the proof, we optimize the choice of $\theta$ in the preceding bounds. For $0 < \beta < 1$, choose $\theta = \frac{1}{y} \log \left( \frac{x}{K k E[Y^\beta] y^{1-\beta}} \right)$ and note that for all $1 \leq k \leq x^\beta/E[Y^\beta]$ and $x \geq e^{(Ke)^{1/(1-\beta)}}$,
$$\theta y \geq \log\left( \frac{(\log x)^{1-\beta}}{K} \right) \geq 1.$$ 
Then,
\begin{align*}
e^{-\theta x + K k E[Y^\beta] e^{\theta y} y^{-\beta}} &= e^{-(\log x) \log \left( \frac{ x^\beta (\log x)^{1-\beta}}{K e k E[Y^\beta]} \right)   } \\
&\leq  e^{-(\log x) \log \left( \frac{(\log x)^{1-\beta}}{K e} \right)  } \\
&= e^{-(1-\beta)(\log x)(\log\log x) \left( 1 - \frac{\log (eK)}{(1-\beta)\log\log x} \right) } .
\end{align*}

Now, for $\beta > 1$, set $\theta = \frac{1}{y} \log\left( \frac{(x-kE[Y]) y^{\beta-1}}{K x} \right)$ and note that for and $x \geq e \vee (Ke/\epsilon)^{2/(\beta-1)}$, 
$$\theta y \geq \log\left( \frac{\epsilon y^{\beta-1}}{K } \right) \geq \log\left( \frac{\epsilon x^{(\beta-1)/2}}{K } \right) \geq 1.$$
Then, for $1 < \beta \leq 2$ and  all $1 \leq k \leq (1-\epsilon) x/(E[Y] \vee E[Y^\beta])$,
\begin{align*}
e^{-\theta (x-kE[Y]) + K k E[Y^\beta] e^{\theta y} y^{-\beta}} &= e^{- \frac{(x-kE[Y])}{y} \log\left( \frac{(x-kE[Y]) y^{\beta-1}}{Kx} \right) + k E[Y^\beta]  \frac{(x-kE[Y])}{xy}  }\\
&\leq e^{- \frac{(x-kE[Y])}{y} \log\left( \frac{\epsilon y^{\beta-1}}{Ke } \right) }  \\
&\leq e^{-\epsilon (\beta-1) (\log x)^2 \left( 1 - \frac{\log\log x}{\log x} - \frac{\log(Ke/\epsilon)}{(\beta-1)\log x}   \right) }  .
\end{align*}
In addition, for $\beta > 2$ note that 
$$\sup_{x \geq e} e k \theta^2 E[Y^2] \leq \sup_{x\geq e} \frac{e x}{y^2} \left( \log\left( \frac{ y^{\beta-1}}{K} \right) \right)^2 \leq \sup_{x \geq e} \frac{e (\beta-1)^2 (\log x)^4}{x} \leq e^5 (\beta-1)^2.$$
Finally, by combining the preceding two bounds with the first two inequalities in \eqref{eq:TripleBound}, we derive
\begin{align*}
P\left( \sum_{i=1}^k Y_i > x, \, \max_{1\leq i \leq k} Y_i \leq y  \right) &\leq e^{-\epsilon (\beta-1) (\log x)^2 \left( 1 - \frac{\log\log x}{\log x} - \frac{\log(Ke/\epsilon)}{(\beta-1)\log x} \right) + e^5 (\beta-1)^2}
\end{align*}
 for any $\beta > 1$. 
\qed\end{proof}

\bigskip

As an immediate corollary to the preceding lemma we obtain:

\begin{cor} \label{C.SimpleTrunc}
Suppose that $Y_1, Y_2, \dots$ are nonnegative iid random variables with the same distribution as $Y$, where $E[Y^\beta] < \infty$ for some $\beta > 0$. Then, for any $\kappa > 0$ there exists a constant $x_0 > 0$ that does not depend on the distribution of $Y$ such that
$$\sup_{1 \leq k \leq m_\beta(x)} P\left( \sum_{i=1}^k Y_i > x, \, \max_{1\leq i \leq k} Y_i \leq x/\log x  \right) \leq x^{-\kappa}$$
for all $x \geq x_0$, where
$$m_\beta(x) = \begin{cases}
\frac{x^\beta}{E[Y^\beta]}, & 0 < \beta < 1, \\
\frac{(1-\epsilon) x}{E[Y] \vee E[Y^\beta]}, & \beta > 1, 0 < \epsilon < 1. 
\end{cases}$$
\end{cor}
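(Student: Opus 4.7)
The plan is to obtain the corollary as a direct quantitative consequence of Lemma~\ref{L.TruncBound}, by observing that in both regimes of $\beta$ the exponent in the bound of that lemma grows strictly faster than $\kappa \log x$, uniformly in $k$ within the stated range.

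First I would treat the case $0 < \beta < 1$. Lemma~\ref{L.TruncBound}(1) gives, uniformly in $1\leq k \leq x^\beta/E[Y^\beta]$ and for all $x\geq e^{(Ke)^{1/(1-\beta)}}$, the bound
$$P\!\left(\sum_{i=1}^k Y_i > x,\ \max_{1\le i\le k} Y_i \le x/\log x\right) \le \exp\!\left\{-(1-\beta)(\log x)(\log\log x)\left(1 - \tfrac{\log(eK)}{(1-\beta)\log\log x}\right)\right\},$$
where $K=K(\beta)$ depends only on $\beta$. Since the right-hand side is bounded above by $x^{-\kappa}$ precisely when $(1-\beta)\log\log x - \log(eK) \geq \kappa$, the condition is met for all $x$ larger than a threshold $x_0$ that depends only on $\beta$, $\kappa$, and $K(\beta)$, but not on the law of $Y$. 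This supremum is uniform in $k$ because the bound in Lemma~\ref{L.TruncBound}(1) does not depend on $k$.

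Next I would handle $\beta > 1$ by fixing any $0 < \epsilon < 1$ (the $\epsilon$ hidden in the definition of $m_\beta(x)$ for that case) and applying Lemma~\ref{L.TruncBound}(2). Uniformly in $1\le k \le (1-\epsilon)x/(E[Y]\vee E[Y^\beta])$ and for $x \geq e \vee (Ke/\epsilon)^{2/(\beta-1)}$, the lemma yields
$$P\!\left(\sum_{i=1}^k Y_i > x,\ \max_{1\le i\le k}Y_i \le x/\log x\right) \le \exp\!\left\{-\epsilon(\beta-1)(\log x)^2\bigl(1-o(1)\bigr) + e^5(\beta-1)^2\right\},$$
where the $o(1)$ correction depends only on $\beta$, $\epsilon$, and $K(\beta)$. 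The dominant exponent is of order $(\log x)^2$, which overwhelms $\kappa \log x$ for every fixed $\kappa > 0$ as soon as $x$ exceeds a threshold depending only on $\beta$, $\epsilon$, $\kappa$, and $K(\beta)$.

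Taking $x_0$ to be the maximum of the two thresholds just identified gives the claim, with the crucial observation that the constant $K(\beta)$ supplied by Lemma~\ref{L.TruncBound} is intrinsic to $\beta$ and not to the distribution of $Y$. There is no real obstacle here beyond bookkeeping; the only point requiring a little care is ensuring that, in the $\beta > 1$ regime, the admissible range of $k$ in Lemma~\ref{L.TruncBound}(2) matches the definition of $m_\beta(x)$ in the corollary, which it does once one fixes the same $\epsilon$ throughout.
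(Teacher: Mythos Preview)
Your proposal is correct and follows exactly the approach implicit in the paper, which presents the result as an immediate corollary of Lemma~\ref{L.TruncBound} without a separate proof. The only content is the observation that the exponents in Lemma~\ref{L.TruncBound} grow like $(\log x)(\log\log x)$ and $(\log x)^2$ respectively, hence eventually dominate $\kappa\log x$, with thresholds depending only on $\beta$, $\epsilon$, $\kappa$, and $K(\beta)$---which is precisely what you have written out.
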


\bigskip

Lemma \ref{L.Bound1Iter} below gives a bound for the distribution of $W_{n+1}$ in terms of that of $W_{n}$. This lemma can also be used to prove the corresponding uniform bound for $W_n$ in the case when $Q$ dominates recursion \eqref{eq:GeneralPR}. In the statement of the lemma we assume that $1/L(x)$ is locally bounded on $[1, \infty)$. 

\begin{lem} \label{L.Bound1Iter}
Suppose that $P(N > x) \leq x^{-\alpha} L(x)$, with $\alpha > 1$ and $L(\cdot)$ slowly varying, and \linebreak $E[N] \max\{E[C^{\alpha}], E[C]\} < \eta < 1$.  Then, for any $c > 0$, $0 < \epsilon < 1$, and $0 < \delta < 1 \wedge (\alpha-1)/2$,  there exist constants $K = K(\delta,\epsilon,c,\eta) > 0$ and $x_0 = x_0(\delta,\epsilon,c,\eta) > 0$, that do not depend on $n$, such that for all $1 \leq n \leq c\log x/|\log\eta|$ and all $x \geq x_0$,
\begin{equation*} 
P(W_{n+1} > x) \leq K \eta^{(2\wedge (\alpha-\delta)) n} x^{-\alpha} L(x) + E[N] P(C W_{n} > (1-\epsilon) x) ,
\end{equation*}
where $C$ and $W_n$ are independent.
\end{lem}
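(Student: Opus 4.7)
My plan is to isolate the ``one large summand'' contribution, which produces exactly the recursive term $E[N]\,P(CW_n>(1-\epsilon)x)$, and to bound the remainder by combining Corollary~\ref{C.SimpleTrunc} with truncated Markov estimates on the tail of $N$. Writing $W_{n+1}\stackrel{\mathcal{D}}{=}\sum_{i=1}^N Y_i$ with $Y_i=C_iW_{n,i}$ iid and independent of $N$, the starting point is
\[
P(W_{n+1}>x)\;\le\;P\!\Bigl(\max_{1\le i\le N}Y_i>(1-\epsilon)x\Bigr)\;+\;T,\qquad T=P\!\bigl(\textstyle\sum_i Y_i>x,\ \max_i Y_i\le(1-\epsilon)x\bigr),
\]
and the first term is bounded by $E[N]\,P(CW_n>(1-\epsilon)x)$ via the union bound. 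The task is therefore to show $T\le K\eta^{(2\wedge(\alpha-\delta))n}x^{-\alpha}L(x)$. A key input used throughout is $E[Y^\beta]\le K_\beta\,\eta^n$ for every $\beta\in[1,\alpha)$: the H\"older interpolation $E[C^\beta]\le E[C]^{(\alpha-\beta)/(\alpha-1)}E[C^\alpha]^{(\beta-1)/(\alpha-1)}$ together with the hypothesis $E[N]\max\{E[C^\alpha],E[C]\}<\eta$ gives $E[N]\,E[C^\beta]<\eta$, after which Lemma~\ref{L.GeneralMoment} controls $E[W_n^\beta]$.

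I would then introduce the secondary threshold $y=x/\log x$ and split $T=T_1+T_2$ according to whether $\max Y_i\le y$ or $y<\max Y_i\le(1-\epsilon)x$. For $T_1$, I condition on $N$ and use the cutoff $m(x)=(1-\epsilon)x/(E[Y]\vee E[Y^\beta])$ with $\beta=2\wedge(\alpha-\delta)$, so the moment estimate gives $m(x)\ge Kx\eta^{-n}$. Corollary~\ref{C.SimpleTrunc} yields $\sup_{1\le k\le m(x)}P(\sum_{i=1}^k Y_i>x,\max\le y)\le x^{-\kappa}$ for any prescribed $\kappa$; choosing $\kappa$ larger than $\alpha+c(2\wedge(\alpha-\delta))$ makes this piece negligible throughout the range $n\le c\log x/|\log\eta|$. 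For the complementary event $\{N>m(x)\}$, the tail assumption combined with Potter's theorem (absorbing the $L(m(x))$ into $K\eta^{-n\gamma}L(x)$ for arbitrarily small $\gamma>0$) produces $P(N>m(x))\le K\eta^{n(\alpha-\gamma)}x^{-\alpha}L(x)$, and $\gamma$ can be chosen so that $\alpha-\gamma\ge 2\wedge(\alpha-\delta)$.

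The delicate piece is $T_2$. Using symmetry over which index carries the maximum, followed by independence,
\[
T_2\;\le\;\sum_{k\ge1}k\,P(Y>x/\log x)\,P\!\Bigl(\textstyle\sum_{l=1}^{k-1}Y_l>\epsilon x\Bigr)P(N=k).
\]
I would apply Markov at exponent $\alpha-\delta$ to obtain $P(Y>x/\log x)\le K\eta^n(\log x)^{\alpha-\delta}/x^{\alpha-\delta}$, and Markov at exponent $1$ to obtain $P(\sum_{l=1}^{k-1}Y_l>\epsilon x)\le K(k-1)\eta^n/x$. Since $E[N^2]$ may be infinite when $\alpha\le2$, I split the $k$-sum at $k^*=x/\eta^n$: the Markov bound is used for $k\le k^*$ and the trivial bound $P(\cdot)\le1$ for $k>k^*$. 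Integrating against the tail of $N$ and invoking Potter's theorem yields $\sum_k kg(k)P(N=k)\le K\eta^{n(\alpha-1-\gamma')}x^{1-\alpha}L(x)$ when $\alpha\le2$, and $\le K\eta^n E[N^2]/x$ when $\alpha>2$. Multiplying by the estimate on $P(Y>x/\log x)$ delivers $T_2\le K\eta^{n\alpha''}(\log x)^{\alpha-\delta}x^{1-2\alpha+\delta}L(x)$ with $\alpha''\ge 2\wedge(\alpha-\delta)$; since $\delta<(\alpha-1)/2$ forces $1-2\alpha+\delta<-\alpha$, this is at most $K\eta^{(2\wedge(\alpha-\delta))n}x^{-\alpha}L(x)$ for $x\ge x_0$. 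The principal obstacle is precisely this $T_2$ estimate: a direct Chernoff bound on $\sum_{i=1}^N Y_i$ is blocked by the absence of exponential moments of $N$, while a second-moment bound fails outright when $\alpha\le2$. Balancing the Markov bound on the inner sum against the truncated tail of $N$ at $k^*=x/\eta^n$ is what makes the argument go through, and the restriction $n\le c\log x/|\log\eta|$ enters precisely to keep $\log k^*$ comparable to $\log x$ so that Potter's theorem introduces only a harmless polynomial factor in $\eta^n$.
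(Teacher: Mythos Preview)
Your proof is correct and follows the paper's overall architecture: peel off the event $\{\max_i Y_i>(1-\epsilon)x\}$ to produce the recursive term, kill the ``no large summand'' piece with Corollary~\ref{C.SimpleTrunc}, and control $P(N>m(x))$ via Potter's theorem. The genuine difference is in how the intermediate regime is handled.

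The paper truncates $N\le m_\beta(x)$ from the outset and then uses the \emph{second} order statistic: on $\{M_N^{(N)}\le(1-\epsilon)x,\ M_N^{(N-1)}\le y/\log y\}$ (with $y=\epsilon x$) one subtracts the maximum and applies Corollary~\ref{C.SimpleTrunc} at level $y$, while the complementary event $\{M_N^{(N-1)}>y/\log y\}$ is a ``two moderate jumps'' event, bounded by $\binom{k}{2}P(Y>y/\log y)^2$ and summed using $E[N^2\mathbf{1}(N\le m_\beta(x))]\le E[N^{2\wedge\beta}]\,m_\beta(x)^{(2-\beta)^+}$. By contrast, your $T_2$ is a ``one moderate jump plus large remainder'' event, bounded by $k\,P(Y>y)\,P\bigl(\sum_{l=1}^{k-1}Y_l>\epsilon x\bigr)$, after which you split the $k$-sum at $k^*=x/\eta^n$ and integrate against the tail of $N$.

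Both routes land on the same $\eta^{(2\wedge(\alpha-\delta))n}x^{-\alpha}L(x)$ bound. The paper's decomposition is a little cleaner: carrying the truncation $N\le m_\beta(x)$ uniformly means the second-moment issue for $\alpha\le 2$ is resolved once, by the factor $m_\beta(x)^{(2-\beta)^+}$, rather than by a separate tail-integration at $k^*$. Your version trades that simplicity for a more direct use of the condition $n\le c\log x/|\log\eta|$ (to keep $\log k^*\asymp\log x$ for Potter's theorem), and it avoids introducing the second order statistic at all. One minor point: at the boundary $\alpha=2$ your Karamata-type bound on $E[N^2\mathbf{1}(N\le k^*)]$ picks up an extra $\log k^*\le K\log x$ factor, but this is absorbed by the slack $x^{1-\alpha+\delta}$, so the conclusion is unaffected.
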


{\sc Remark:} Note that the condition $E[N] \max\{E[C^\alpha], E[C]\} < 1$ is natural since it is needed for the finiteness of $E[R^\beta]$ for any $\beta < \alpha$. It is also in agreement with Lemma \ref{L.Finite_n} in the sense that it is a necessary condition for the convergence (as $n \to \infty$) of the sum appearing in \eqref{eq:Asymp_R_n}. The choice of $\eta$ is also suggested by the fact that for $\beta < \alpha$ one can obtain a weaker uniform bound by applying the moment estimate on $E[W_n^\beta]$ from Lemma \ref{L.GeneralMoment}, i.e., $P(W_n > x) \leq E[W_n^\beta] x^{-\beta} \leq K_\beta (E[N]\max\{ E[C], E[C^\beta] \})^n x^{-\beta}$.

Before going into the proof, we would like to emphasize that special care goes into making sure that $K$ and $x_0$ in the statement of the lemma do not depend on $n$. This is important since Lemma~\ref{L.Bound1Iter} will be applied iteratively in the proof of Proposition \ref{P.UniformBound}, where one does not want $K$ and $x_0$ to grow from one iteration to the next.

\begin{proof}[Proof of Lemma \ref{L.Bound1Iter}]
By convexity of $f(\theta) = E[C^\theta]$, $\max\{E[C^\alpha], E[C]\} \geq \max\{ E[C^{\alpha-\delta}], E[C] \}$, implying
$$\varepsilon \triangleq \frac{\eta}{E[N] \max\{E[C^{\alpha-\delta}], E[C]\}} - 1 > 0.$$ 
Next, recall that $W_{n+1} \stackrel{\mathcal{D}}{=} \sum_{i=1}^N C_i W_{n,i}$ where $W_{n,i}$ are iid copies of $W_n$, let $Y \stackrel{\mathcal{D}}{=} Y_i = C_i W_{n,i}$ and $\beta = \alpha- \delta > 1$. Note that by Lemma \ref{L.GeneralMoment} there exists a constant $K_1 > 0$ (that does not depend on $n$) such that, 
\begin{align}
E[Y^\beta] &= E[C^\beta] E[W_n^\beta] \notag \\
&\leq K_1 (E[N] \max\{E[C^{\alpha-\delta}], E[C]\})^n \notag  \\
&= K_1 (1+\varepsilon)^{-n} \eta^n, \label{eq:NewMomentBound}
\end{align}
where the last equality comes from the definition of $\varepsilon$. And since $E[Y] = E[Q] (E[N] E[C])^n$ \linebreak $\leq E[Q] (E[N] \max\{ E[C^{\alpha-\delta}], E[C]\})^n$, then
\begin{equation} \label{eq:OtherMomentBound}
E[Y^\beta] \vee E[Y] \leq K_2 (1+\varepsilon)^{-n} \eta^{n}
\end{equation}
for some constant $K_2 > 0$ that does not depend on $n$.  With the intent of applying Corollary \ref{C.SimpleTrunc}, we define
$$y \triangleq \epsilon x \qquad \text{and} \qquad m_\beta(x) \triangleq \lfloor \epsilon^2 x/(E[Y^\beta] \vee E[Y]) \rfloor.$$
Let $M_k^{(i)}$ is the $i$th order statistic of $\{Y_1, \dots, Y_k \}$, with $M_k^{(k)}$ being the largest. Then,
\begin{align}
P\left( W_{n+1} > x \right) &= P\left( \sum_{i=1}^N Y_i > x \right) \notag \\
&\leq P\left( \sum_{i=1}^N Y_i > x, \, N \leq m_\beta(x) \right) + P\left( N > m_\beta(x) \right) \notag \\
&\leq P\left( \sum_{i=1}^N Y_i > x, \, M_N^{(N)} \leq (1-\epsilon) x, \, N \leq m_\beta(x) \right) \notag \\
&\hspace{5mm} + P\left( M_N^{(N)} > (1-\epsilon) x, \, N \leq m_\beta(x) \right) + P\left( N > m_\beta(x) \right) \notag \\
&\leq P\left( \sum_{i=1}^N Y_i > x, \, M_N^{(N)} \leq (1-\epsilon) x, \, M_N^{(N-1)} \leq y/\log y, \, N \leq m_\beta(x) \right) \label{eq:FirstIneq} \\
&\hspace{5mm} + P\left( M_N^{(N-1)} > y /\log y, \, N \leq m_\beta(x) \right) \label{eq:SecondIneq} \\
&\hspace{5mm}  + P\left( M_N^{(N)} > (1-\epsilon) x, \, N \leq m_\beta(x) \right) + P\left(N > m_\beta(x)\right) . \label{eq:ThirdIneq}
\end{align}
Note that the term in \eqref{eq:FirstIneq} can be bounded as follows
\begin{align*}
&P\left( \sum_{i=1}^N Y_i > x, \, M_N^{(N)} \leq (1-\epsilon) x, \, M_N^{(N-1)} \leq y/\log y, \, N \leq m_\beta(x) \right) \\
&\leq P\left( \sum_{i=1}^N Y_i - M_N^{(N)} > y, \, M_N^{(N-1)} \leq y/\log y, \, N \leq m_\beta(x) \right) \\
&\leq P\left( \sum_{i=1}^{N} Y_i > y, \, M_{N}^{(N)} \leq y/\log y, \, N \leq m_\beta(x) \right) \\
&\leq P\left( \sum_{i=1}^{m_\beta(x)} Y_i > y, \, \max_{1\leq i < m_\beta(x)} Y_i \leq y/\log y \right).
\end{align*}
Fix $\nu = \alpha + \delta + c(\alpha-\delta)$, then, by Corollary \ref{C.SimpleTrunc}, there exists a constant $x_1 \geq e$, that does not depend on the distribution of $Y$ (and therefore, does not depend on $n$), such that
\begin{align*}
P\left( \sum_{i=1}^{m_\beta(x)} Y_i > y, \, \max_{1\leq i < m_\beta(x)} Y_i \leq y/\log y \right) &\leq y^{-\nu} = \epsilon^{-\nu} \eta^{\frac{c(\alpha-\delta)}{|\log\eta|} \cdot \log x} x^{-\alpha-\delta} \\
&\leq \epsilon^{-\nu} \eta^{(\alpha-\delta) n} x^{-\alpha-\delta} = \epsilon^{-\nu} \frac{x^{-\delta}}{L(x)} \,\eta^{\beta n} x^{-\alpha} L(x) \\
&\leq \epsilon^{-\nu} \sup_{t \geq 1} \frac{t^{-\delta}}{L(t)} \, \eta^{\beta n} x^{-\alpha} L(x)
\end{align*} 
for all $y \geq x_1$, where the second inequality follows from the assumption $n \leq c\log x/|\log\eta|$, and in the second equality we use the definition $\beta = \alpha-\delta$.  To bound \eqref{eq:SecondIneq}, we condition on $N$,
\begin{align*}
P\left( M_N^{(N-1)} > y /\log y, \, N \leq m_\beta(x) \right)  &= \sum_{k=1}^{m_\beta(x)} P\left( M_k^{(k-1)} > y /\log y  \right) P(N = k) \\
&\leq \sum_{k=1}^{m_\beta(x)} \binom{k}{2} P(Y > y/\log y)^2 P(N = k) \\
&\leq E\left[ N^2 1_{(N \leq m_\beta(x))} \right] P(Y > y/\log y)^2 \\
&\leq E\left[  N^{2 \wedge \beta} \right] m_\beta(x)^{(2-\beta)^+} P(Y > y/\log y)^2 ,
\end{align*}
where in the last inequality we use $N \leq m_\beta(x)$ in case $N$ does not have a second moment.  Now, by Markov's inequality and the definition of $m_\beta(x)$, 
\begin{align*}
m_\beta(x)^{(2-\beta)^+} P(Y > y/\log y)^2 &\leq m_\beta(x)^{(2-\beta)^+} \left( \frac{E[Y^{\beta}] (\log y)^\beta}{y^\beta}  \right)^2 \\
&\leq \left( \frac{ E[Y^\beta] }{E[Y^\beta] \vee E[Y]} \right)^{(2-\beta)^+} \frac{\epsilon^{(2-\beta)^+} E[Y^\beta]^{2 \wedge \beta} (\log y)^{2\beta} }{y^{2\beta \wedge (3\beta-2)}}  \\
&\leq \frac{\epsilon^{(2-\beta)^+} E[Y^\beta]^{2 \wedge \beta} (\log y)^{2\beta} }{y^{2\beta \wedge (3\beta-2)}} \\
&\leq \frac{\epsilon^{(2-\beta)^+} (K_1 (1+\varepsilon)^{-n} \eta^{n})^{2 \wedge \beta} (\log y)^{2\beta} }{y^{2\beta \wedge (3\beta-2)}} \qquad \text{(by \eqref{eq:NewMomentBound})}.
\end{align*}
Our choice of $\delta$ guarantees that $2\beta \wedge (3\beta -2) > \alpha + \delta$ and $\beta = \alpha-\delta > 1$, and therefore,
\begin{align*}
P\left( M_N^{(N-1)} > y /\log y, \, N \leq m_\beta(x) \right) &\leq K_3 \, \frac{\eta^{(2\wedge\beta)n}}{(1+\varepsilon)^{(2\wedge \beta)n}}  x^{-\alpha-\delta} \\
&\leq K_3 \, \frac{x^{-\delta}}{L(x)} \eta^{(2\wedge\beta)n}  x^{-\alpha} L(x) \\
&\leq K_3 \sup_{t \geq 1} \frac{t^{-\delta}}{L(t)} \, \eta^{(2\wedge\beta)n} x^{-\alpha} L(x)
\end{align*}
for all $x \geq x_2 = \epsilon^{-1} e$, where
$$K_3 = K_3(\epsilon,\delta) =  E\left[  N^{2 \wedge \beta} \right] \epsilon^{(2-\beta)^+-\alpha-\delta} K_1^{2\wedge \beta} \,  \sup_{t \geq e} \frac{(\log t)^{2\beta} }{t^{2\beta \wedge (3\beta-2) -\alpha-\delta}}.$$
To bound the second term in \eqref{eq:ThirdIneq}, we first note that by Potter's Theorem (see Theorem 1.5.6 (ii) on p. 25 in \cite{BiGoTe1987}), there exists a constant $x_3 = x_3(\varepsilon, \delta)$ such that for all $x \geq x_3$
\begin{align*}
P(N > m_\beta(x)) &\leq \frac{(m_\beta(x))^{-\alpha} L(m_\beta(x))}{x^{-\alpha} L(x)} \cdot x^{-\alpha} L(x) \\
&\leq (1+\varepsilon) \max\left\{ \left( \frac{m_\beta(x)}{x} \right)^{-\alpha+\delta}, \, \left( \frac{m_\beta(x)}{x} \right)^{-\alpha-\delta} \right\} x^{-\alpha} L(x) \\
&= (1+\varepsilon) \max\left\{ \left( \frac{E[Y^\beta] \vee E[Y]}{\epsilon^2} \right)^{\alpha-\delta}, \, \left( \frac{E[Y^\beta] \vee E[Y]}{\epsilon^2} \right)^{\alpha+\delta} \right\} x^{-\alpha} L(x) \\
&\leq \frac{(1+\varepsilon)}{\epsilon^{2(\alpha+\delta)}} (E[Y^\beta] \vee E[Y])^\beta x^{-\alpha} L(x) \\
&\leq \frac{K_2^\beta }{\epsilon^{2(\alpha+\delta)}} \cdot \frac{\eta^{\beta n}}{(1+\varepsilon)^{\beta n - 1}}  \cdot x^{-\alpha} L(x) \qquad \text{(by \eqref{eq:OtherMomentBound})} \\
&\leq K_4 \eta^{\beta n} x^{-\alpha} L(x).
\end{align*}
Finally, for the first term in \eqref{eq:ThirdIneq}, 
\begin{align*}
P\left( M_N^{(N)} > (1-\epsilon) x, \, N \leq m_\beta(x) \right) &\leq P\left( M_N^{(N)} > (1-\epsilon) x \right) \\
&\leq E[N] P(Y > (1-\epsilon) x).
\end{align*}
Combining the preceding bounds for \eqref{eq:FirstIneq} - \eqref{eq:ThirdIneq} and setting $x_0 = \max\{x_1, x_2, x_3\}$ and $K = (\epsilon^{-\nu} + K_3) \sup_{t \geq 1} \frac{t^{-\delta}}{L(t)} + K_4$ completes the proof. 
\qed\end{proof}

\bigskip

Finally, we give the proof of Proposition \ref{P.UniformBound}, the main technical contribution of Section \ref{S.NDominates}. 

\begin{proof}[Proof of Proposition \ref{P.UniformBound}]
Note that it is enough to prove the proposition for all $x \geq x_0$ for some $x_0 = x_0(\eta,\nu) > 1$, since for all $1 \leq x \leq x_0$ and $n \geq 1$, 
\begin{align*}
P(W_n > x) &= \frac{P(W_n > x)}{ \eta^n P(N > x)} \, \eta^n P(N > x)\\
&\leq \frac{E[Q] (E[N]E[C])^n x^{-1}}{\eta^n P(N > x)} \, \eta^n P(N > x) \qquad \text{(by Markov's inequality)} \\
&\leq \sup_{1 \leq t \leq x_0} \frac{E[Q]}{t P(N > t)} \, \cdot \eta^n P(N > x) .
\end{align*}
Next, choose $0 < \epsilon < 1$ such that
\begin{equation} \label{eq:EpsilonChoice}
E[N] E[C^\alpha]  \left( (1-\epsilon)^{-\alpha-1} + 2\epsilon  \right) \leq \eta,
\end{equation}
define $c = \nu/2$,
$$\gamma = \frac{1}{|\log\eta|} \log\left( \frac{\eta}{E[N] \max\{E[C^{\alpha}], E[C]\}} \right),$$
and select $0 < \delta < \min\{1, (\alpha-1)/2, c\gamma \}$. Now, by Lemma \ref{L.Bound1Iter}, there exist constants $K_1, x_1 > 0$ (that do not depend on $n$) such that
$$P(W_{n+1} > x) \leq K_1 \eta^{(2\wedge (\alpha-\delta)) n} P(N > x) + E[N] P(C W_{n} > (1-\epsilon) x)$$
for all $x \geq x_1$. Hence, by defining $n_0 = (2 \wedge(\alpha-\delta)-1)^{-1} (\log\eta)^{-1} \log(\epsilon E[N]E[C^\alpha])$, we obtain
\begin{equation} \label{eq:oneIter}
P(W_{n+1} > x) \leq K_1 E[N]E[C^\alpha] \epsilon \eta^{n} P(N > x) + E[N] P(C W_{n} > (1-\epsilon) x)
\end{equation}
for all $n \geq n_0$, and all $x \geq x_1$. 

Next, in order to derive an explicit bound for $P(W_n > x)$, we need the following two estimates \eqref{eq:boundForCN} and \eqref{eq:boundForC}. In this regard, choose $x_0 \geq 1 \vee x_1$ such that
\begin{equation} \label{eq:boundForCN}
P(C N > (1-\epsilon) x) \leq E[C^\alpha] (1-\epsilon)^{-\alpha-1} P(N > x)
\end{equation}
for all $x \geq x_0$. This is possible since by Lemma 4.2 from \cite{Jess_Miko_06} $P(C N > (1-\epsilon) x) \sim E[C^\alpha] (1-\epsilon)^{-\alpha} P(N > x)$. Also, by Markov's inequality, we have that for all $1\leq n \leq c\log x/|\log\eta|$, 
\begin{align}
P(C > (1-\epsilon) x/x_0) &\leq E[C^{\alpha+\nu}] (1-\epsilon)^{-\alpha-\nu} x_0^{\alpha+\nu}  x^{-\alpha-\nu}  \notag \\
&= \frac{E[C^{\alpha+\nu}]  x_0^{\alpha+\nu} }{(1-\epsilon)^{\alpha+\nu} x^{\nu/2} L(x)} x^{-\nu/2} P(N > x)  \notag \\
&\leq \frac{E[C^{\alpha+\nu}] x_0^{\alpha+\nu}}{(1-\epsilon)^{\alpha+\nu} x^{\nu/2}L(x)} \, \eta^n P(N > x)  ,\label{eq:boundForC}
\end{align} 
where in the second inequality we use $x^{-\nu/2} = x^{-c} = \eta^{\frac{c\log x}{|\log\eta|}} \leq \eta^n$. Now, define
$$K_2 = \max\left\{1, \, K_1, \, \sup_{x \geq x_0} \frac{E[C^{\alpha+\nu}] x_0^{\alpha+\nu}}{\epsilon E[C^\alpha] (1-\epsilon)^{\alpha+\nu} x^{\nu/2}L(x)} \right\}.$$

Now we proceed to derive bounds for $P(W_n > x)$ for different ranges of $n$. For all $1\leq n \leq n_0$ and all $x \geq x_0$, by Lemma \ref{L.Finite_n}, there exists a constant $K_0 \geq K_2$ such that
\begin{equation} \label{eq:InductionHyp}
P(W_n > x) \leq K_0 \,  \eta^n P(N > x).
\end{equation}

Next, for the values $n_0 \leq n \leq c \log x/|\log \eta|$ we proceed by induction using \eqref{eq:oneIter}.  To this end, suppose \eqref{eq:InductionHyp} holds for some $n$ in the specified range. Then, note that  by \eqref{eq:boundForC} and the induction hypothesis \eqref{eq:InductionHyp}, we have for all $x \geq x_0$,
\begin{align*}
P(C W_n > (1-\epsilon) x) &\leq P(C W_n > (1-\epsilon) x, C \leq (1-\epsilon)x/x_0) + P(C > (1-\epsilon)x/x_0) \\
&\leq \int_0^{(1-\epsilon)x/x_0} P(W_n > (1-\epsilon) x/y) P(C \in dy) + K_2 E[C^\alpha]  \epsilon \eta^n P(N > x) \\
&\leq K_0 \eta^n \int_0^{\infty} P(N > (1-\epsilon) x/y) P(C \in dy) + K_2 E[C^\alpha] \epsilon \eta^n P(N > x) \\
&= K_0 \eta^n P(CN > (1-\epsilon) x) + K_2 E[C^\alpha] \epsilon \eta^n P(N > x) \\
&\leq K_0 E[C^\alpha] \left( (1-\epsilon)^{-\alpha-1} + \epsilon \right) \eta^n P(N > x) ,
\end{align*}
where in the last inequality we used \eqref{eq:boundForCN} and $K_0 \geq K_2$. Then, by replacing the preceding bound in \eqref{eq:oneIter} and using \eqref{eq:EpsilonChoice}, we derive
\begin{align*}
P(W_{n+1} > x) &\leq K_0 E[N] E[C^\alpha]  \left( (1-\epsilon)^{-\alpha-1} + 2\epsilon  \right)  \eta^n P(N > x) \\
&\leq K_0 \eta^{n+1} P(N > x)
\end{align*}
for all $x \geq x_0$ and all $1 \leq n \leq c\log x/|\log\eta|$.  

Finally, for $n \geq c \log x/|\log\eta|$, we follow a different approach that comes from our moment estimates for $W_n$. Let
$$\varepsilon =  \frac{\eta}{E[N] \max\{E[C^{\alpha}], E[C]\}}  - 1 > 0$$
and note that by convexity
$$E[N] \max\{E[C^{\alpha-\delta}], E[C]\} \leq E[N] \max\{E[C^\alpha], E[C]\}  = (1+ \varepsilon)^{-1} \eta.$$ 
Then, by Markov's inequality and Lemma \ref{L.GeneralMoment}, we have
\begin{align}
P(W_n > x) &\leq E[W_n^{\alpha-\delta}] x^{-\alpha+\delta} \notag \\
&\leq K_{\alpha-\delta} (E[N] \max\{E[C^{\alpha-\delta}], E[C]\})^n x^{-\alpha+\delta} \notag \\
&= K_{\alpha-\delta} (1+\varepsilon)^{- n} \eta^n x^{-\alpha+\delta} \notag \\
&\leq K_{\alpha-\delta} x^{- \log(1+\varepsilon) c /|\log\eta|}  \eta^n x^{-\alpha+\delta} \label{eq:boundForW_n}
\end{align}
for all $x > 0$. Note that the preceding bound,
\begin{align*}
 \frac{\log (1+\varepsilon)}{|\log\eta|} &= \frac{1}{|\log\eta|} \log\left( \frac{\eta}{E[N] \max\{E[C^{\alpha}], E[C]\}} \right) =  \gamma,
\end{align*}
and \eqref{eq:boundForW_n} yield
\begin{align*}
P(W_n > x) &\leq K_{\alpha-\delta} \eta^n x^{-c\gamma -\alpha+\delta} \\
&\leq  K_{\alpha-\delta} \eta^n x^{-\alpha+\delta -c\gamma } = K_{\alpha-\delta} \eta^n \frac{x^{\delta -c\gamma }}{L(x)} P(N > x) \\
&\leq K_{\alpha-\delta} \sup_{t \geq 1} \frac{t^{\delta-c\gamma}}{ L(t)} \, \eta^n P(N > x) 
\end{align*}
for all $x \geq 1$; recall that $\delta < c\gamma$. Thus, setting $K = \max\{K_0, \, K_{\alpha-\delta} \sup_{t \geq 1} t^{\delta-c\gamma} (L(t))^{-1} \}$ completes the proof. 
\qed\end{proof}

\subsection{The case when $Q$ dominates} \label{SS.QDominate_Proofs}

We end the paper with the proof of Lemma \ref{L.Finite_nQ} (the proof of Lemma \ref{L.W_n_Finite_nQ} is basically the same) and a sketch of the proof of Proposition \ref{P.UniformBoundQ}. As mentioned before, the proofs of the other results presented in Section \ref{S.QDominates} have been omitted since they are very similar to those from Section \ref{S.NDominates}.

\begin{proof}[Proof of Lemma \ref{L.Finite_nQ}]
We proceed by induction in $n$. By Lemma 4.2 from \cite{Jess_Miko_06},
$$P(C Q > x) \sim E[C^\alpha] P(Q > x),$$
by Lemma 3.7(1) from the same source,
$$P\left( \sum_{i=1}^{N^{(0)}} C_i^{(1)} Q^{(1)}_i > x \right) \sim E[N] P(C Q > x) \sim E[N] E[C^\alpha] P(Q> x),$$
and by Lemma 3.1, again from the same source, we have
\begin{align*}
P(R^{(1)} > x) &= P\left( \sum_{i=1}^{N^{(0)}} C_i^{(1)} Q_i^{(1)} + Q^{(0)} > x \right) \\
&\sim P\left( \sum_{i=1}^{N^{(0)}} C_i^{(1)} Q_i^{(1)} > x \right) + P(Q > x) \\
&\sim (\rho_\alpha + 1) P(Q > x).
\end{align*}
Now suppose that we have
$$P(R^{(n)} > x) \sim \sum_{k=0}^n \rho_\alpha^k \, P(Q > x).$$
Then,
\begin{align*}
P(R^{(n+1)} > x) &= P\left( \sum_{i=1}^{N^{(0)}} C_i^{(1)} R_{i}^{(n)} + Q^{(0)} > x \right) \\
&\sim P\left( \sum_{i=1}^{N^{(0)}} C_i^{(1)} R_{i}^{(n)} > x \right) + P(Q > x) \\
&\sim E[N] E[C^\alpha] P(R^{(n)} > x) + P(Q > x) \\
&\sim \left( \rho_\alpha \sum_{k=0}^n \rho_\alpha^k + 1 \right) P(Q > x) \\
&= \sum_{k=0}^{n+1} \rho_\alpha^k \, P(Q > x).
\end{align*}
\qed\end{proof}

\begin{proof}[Sketch of the proof of Proposition \ref{P.UniformBoundQ}]
By Markov's inequality 
$$P(N > x) \leq E[N^{\alpha+\nu}] x^{-\alpha-\nu}$$
for all $x > 0$. Use Lemma \ref{L.Bound1Iter} to obtain 
$$P(W_{n+1} > x) \leq K_1 E[N] E[C^\alpha] \epsilon \eta^n P(Q > x) + E[N] P(C W_n > (1-\epsilon) x)$$
for all $n_0 \leq n \leq \kappa \log x$ and all $x \geq x_1$ (for suitably chosen constants $\epsilon, n_0, \kappa$). Choose $x_0 \geq 1 \vee x_1$ such that
$$P(CQ > (1-\epsilon) x) \leq E[C^\alpha] (1-\epsilon)^{-\alpha-1} P(Q > x).$$
The rest of the proof continues as in Proposition \ref{P.UniformBound} with some modifications. 
\qed\end{proof}


\ack

The authors are grateful to Professor Charles Goldie for pointing out a reference, and also to an anonymous referee for his or her helpful comments. 


\begin{thebibliography}{10}

\bibitem{Alsm_Kuhl_07}
G.~Alsmeyer and D.~Kuhlbusch.
\newblock Double martingale structure and existence of $\phi$-moments for
  weighted branching processes.
\newblock {\em M\"unster Journal of Mathematics}, 3, 2010.

\bibitem{Alsm_Rosl_05}
G.~Alsmeyer and U.~R\"osler.
\newblock A stochastic fixed point equation related to weighted branching with
  deterministic weights.
\newblock {\em Electron. J. Probab.}, 11:27--56, 2005.

\bibitem{Asm_98}
S.~Asmussen.
\newblock Subexponential asymptotics for stochastic processes: {E}xtremal
  behavior, stationary distributions and first passage probabilities.
\newblock {\em Ann. Appl. Probab.}, 8(2):354--474, 1998.

\bibitem{Asm2003}
S.~Asmussen.
\newblock {\em Applied Probability and Queues}.
\newblock Springer, New York, 2003.

\bibitem{Ath_McD_Ney_78}
K.~B. Athreya, D.~McDonald, and P.~Ney.
\newblock Limit theorems for semi-{M}arkov processes and renewal theory for
  {M}arkov chains.
\newblock {\em Ann. Probab.}, 6(5):788--797, 1978.

\bibitem{Athreya_Ney_2004}
K.~B. Athreya and P.~E. Ney.
\newblock {\em Branching Processes}.
\newblock Dover, New York, 2004.

\bibitem{Bal_Dal_Klu_04}
A.~Baltr\={u}nas, D.J. Daley, and C.~Kl\"{u}ppelberg.
\newblock Tail behavior of the busy period of a {GI/GI/1} queue with
  subexponential service times.
\newblock {\em Stochastic Process. Appl.}, 111(2):237--258, 2004.

\bibitem{BiGoTe1987}
N.~H. Bingham, C.M. Goldie, and J.L. Teugels.
\newblock {\em Regular variation}.
\newblock Cambridge University Press, Cambridge, 1987.

\bibitem{Bor00}
A.~Borovkov.
\newblock Estimates for the distribution of sums and maxima of sums of random
  variables without the {C}ram\' {e}r condition.
\newblock {\em Siberian Math J.}, 41(5):997--1038, 2000.

\bibitem{Brandt_86}
A.~Brandt.
\newblock The stochastic equation $y_{n+1} = a_n y_n + b_n$ with stationary
  coefficients.
\newblock {\em Adv. Appl. Prob.}, 18:211--220, 1986.

\bibitem{Bri_Pag_98}
S.~Brin and L.~Page.
\newblock The anatomy of a large-scale hypertextual {W}eb search engine.
\newblock {\em Comput. Networks ISDN Systems}, 30(1-7):107--117, 1998.

\bibitem{ChowTeich1988}
Y.S. Chow and H.~Teicher.
\newblock {\em Probability Theory}.
\newblock Springer-Verlag, New York, 1988.

\bibitem{Mey_Teug_80}
A.~de~Meyer and J.L. Teugels.
\newblock On the asymptotic behaviour of the distributions of the busy period
  and service time in {M/G/1}.
\newblock {\em J. Appl. Probab.}, 17:802--813, 1980.

\bibitem{Den_Foss_Kor_09}
D.~Denisov, S.~Foss, and D.~Korshunov.
\newblock Asymptotics of randomly stopped sums in the presence of heavy tails.
\newblock {\em arXiv:0808.3697v3}.

\bibitem{Fill_Jan_01}
J.A. Fill and S.~Janson.
\newblock Approximating the limiting {Q}uicksort distribution.
\newblock {\em Random Structures Algorithms}, 19(3-4):376--406, 2001.

\bibitem{Goldie_91}
C.~M. Goldie.
\newblock Implicit renewal theory and tails of solutions of random equations.
\newblock {\em Ann. Appl. Probab.}, 1(1):126--166, 1991.

\bibitem{Gyo_Gar_Ped_04}
Z.~Gy\"ongyi, H.~Garcia-Molina, and J.~Pedersen.
\newblock Combating {W}eb spam with {T}rust{R}ank.
\newblock Stanford, 2004.

\bibitem{Iksanov_04}
A.~M. Iksanov.
\newblock Elementary fixed points of the {BRW} smoothing transforms with
  infinite number of summands.
\newblock {\em Stochastic Process. Appl.}, 114:27--50, 2004.

\bibitem{Jel_Mom_04}
P.R. Jelenkovi\'c and P.~Mom\v{c}ilovi\'c.
\newblock Large deviations of square root insensitive random sums.
\newblock {\em Math. Oper. Res.}, 29(2):398--406, 2004.


\bibitem{Jel_Tan_07}
P.R. Jelenkovi\'c and J.~Tan.
\newblock Modulated branching processes, origins of power laws and queueing
  duality.
\newblock {\em To appear in Math. Oper. Res.}, 2010.
\newblock arXiv: 0709.4297.

\bibitem{Jess_Miko_06}
A.~H. Jessen and T.~Mikosch.
\newblock Regularly varying functions.
\newblock {\em Publications de L'Institut Mathematique, Nouvelle Serie},
  80(94):171--192, 2006.

\bibitem{Kesten_73}
H.~Kesten.
\newblock Random difference equations and renewal theory for products of random
  matrices.
\newblock {\em Acta Math.}, 131:207--248, 1973.

\bibitem{Klein_98}
J.~Kleinberg.
\newblock Authoritative sources in a hyperlinked environment.
\newblock {\em Journal of the ACM}, 46(5):604--632, 1999.

\bibitem{Kuhl_04}
D.~Kuhlbusch.
\newblock On weighted branching processes in random environment.
\newblock {\em Stochastic Process. Appl.}, 109:113--144, 2004.

\bibitem{Lit_Sch_Volk_07}
N.~Litvak, W.R.W. Scheinhardt, and Y.~Volkovich.
\newblock In-degree and {PageRank}: Why do they follow similar power laws?
\newblock {\em Internet Math.}, 4(2-3):175--198, 2007.

\bibitem{Liu_98}
Q.~Liu.
\newblock Fixed points of a generalized smoothing transformation and
  applications to the branching random walk.
\newblock {\em Adv. Appl. Prob.}, 30:85--112, 1998.

\bibitem{Liu_00}
Q.~Liu.
\newblock On generalized multiplicative cascades.
\newblock {\em Stochastic Process. Appl.}, 86:263--286, 2000.

\bibitem{Mik_Sam_00}
T.~Mikosch and G.~Samorodnitsky.
\newblock The supremum of a negative drift random walk with dependent
  heavy-tailed steps.
\newblock {\em Ann. Appl. Probab.}, 10(3):1025--1064, 2000.

\bibitem{Nag82}
S.V. Nagaev.
\newblock On the asymptotic behavior of one-sided large deviation
  probabilities.
\newblock {\em Theory Probab. Appl.}, 26(2):362--366, 1982.

\bibitem{Rosler_93}
U.~R\"{o}sler.
\newblock The weighted branching process.
\newblock {\em Dynamics of complex and irregular systems (Bielefeld, 1991)},
  pages 154--165, 1993.
\newblock Bielefeld Encounters in Mathematics and Physics VIII, World Science
  Publishing, River Edge, NJ.

\bibitem{Ros_Rus_01}
U.~R\"{o}sler and L.~R\"{u}schendorf.
\newblock The contraction method for recursive algorithms.
\newblock {\em Algorithmica}, 29(1-2):3--33, 2001.

\bibitem{Ros_Top_Vat_00}
U.~R\"{o}sler, V.A. Topchii, and V.A. Vatutin.
\newblock Convergence conditions for the weighted branching process.
\newblock {\em Discrete Math. Appl.}, 10(1):5--21, 2000.

\bibitem{Volkovich2009}
Y.~Volkovich.
\newblock Stochastic analysis of web page ranking.
\newblock Ph.D. Thesis, University of Twente, 2009.

\bibitem{Volk_Litv_08}
Y.~Volkovich and N.~Litvak.
\newblock Asymptotic analysis for personalized web search.
\newblock {\em Adv. Appl. Prob.}, 42(2):577--604, 2010.

\bibitem{Volk_Litv_Dona_07}
Y.~Volkovich, N.~Litvak, and D.~Donato.
\newblock Determining factors behind the pagerank log-log plot.
\newblock {\em Proceedings of the 5th {I}nternational {W}orkshop on
  {A}lgorithms and {M}odels for the {W}eb-{G}raph, {WAW} 2007}, 2007.

\bibitem{Zwart_01}
B.~Zwart.
\newblock Tail asymptotics for the busy period in the {GI/G/1} queue.
\newblock {\em Math. Oper. Res.}, 26(3):485--493, 2001.

\end{thebibliography}

\end{document}